\newtheorem{thm}{Theorem}[section]
\newtheorem{lem}[thm]{Lemma}
\newtheorem{cor}[thm]{Corollary}
\newtheorem{pro}[thm]{Proposition}
\theoremstyle{definition}
\newtheorem{defi}[thm]{Definition}
\newtheorem{ex}[thm]{Example}
\title[extending structures,pre-Poisson algebras] {Extending structures for pre-Poisson algebras and pre-Poisson bialgebras}
\author{Qianwen Zhu}
\address{Department of Mathematics, Zhejiang University of Science and Technology, Hangzhou, 310023}
\email{2630583032@qq.com}
\author{Guilai Liu}
\address{Chern Institute of Mathematics \& LPMC, Nankai University, Tianjin 300071, China}
\email{liugl@mail.nankai.edu.cn}
\author{Qinxiu Sun}
\address{Department of Mathematics, Zhejiang University of Science and Technology, Hangzhou, 310023} \email{qxsun@126.com}
\subjclass[2020]{17B62, 17B63, 17A30, 16T25}
\keywords{extending structures, pre-Poisson algebras}
\begin{document}
\begin{abstract}
	
In this paper, we explore the extending structures problem by the unified product for pre-Poisson algebras. 
In particular, the crossed product
and the factorization problem are investigated. Furthermore, a special case of extending structures is 
studied under the case of pre-Poisson algebras, which leads to the discussion of bicrossed products
and matched pairs of pre-Poisson algebras. We develop a bialgebra theory for pre-Poisson algebras and
establish the equivalence between matched pairs and pre-Poisson bialgebras.
We study coboundary
pre-Poisson bialgebras, which lead  to the introduction of the
pre-Poisson Yang-Baxter equation (PPYBE). A symmetric
solution of the PPYBE naturally gives a coboundary pre-Poisson
bialgebra.

\end{abstract}

\maketitle

\vspace{-1.2cm}

\tableofcontents

\vspace{-1.2cm}

\allowdisplaybreaks

\section{Introduction}
 The notion of Zinbiel algebras first appeared in Loday$'$s study of the algebraic structure behind the cup
product on the cohomology groups of a Leibniz algebra \cite{8}, and further was studied in \cite{9,10}. 
 Zinbiel algebras are Koszul dual to Leibniz algebras, and they are  special dendriform algebras.
 The anti-commutator of a Zinbiel algebra gives a commutative associative algebra, and the commutator of a pre-Lie algebra gives a Lie algebra.
Conversely, $\mathcal{O}$-operators on commutative associative algebras and Lie algebras give rise to 
Zinbiel algebras and pre-Lie algebras respectively. 
As we all know, a Poisson algebra
is both a Lie algebra and an associative algebra with the Leibniz rule.
Aguiar combined structures of Zinbiel algebra and  pre-Lie algebra on the same vector space, leading to the notion of a pre-Poisson algebra\cite{1}, which closely relates to the Poisson algebra.
On the one hand, a pre-Poisson algebra gives rise to a
Poisson algebra naturally through the sub-adjacent commutative associative algebra of the Zinbiel
algebra and the sub-adjacent Lie algebra of the pre-Lie algebra.
 On the other hand, from an $\mathcal{O}$-operator on a Poisson algebra, one can get a pre-Poisson algebra. 
  
The extending structures problem (ES-problem) arose in the study of group theory developed by 
Agore and Militaru \cite{2}, which unified the two well-known problems in group theory
–H$\rm{\ddot{o}}$lder's extension problem \cite{06} and the factorization problem of Ore \cite{32}. 
Since then, this theory was extended to various kinds of algebras, such as Lie algebras (bialgebras), 
Lie (associative) conformal algebras, left symmetric algebras (bialgebras), Leibniz algebras,
Zinbiel algebras, Poisson algebras (bialgebras), perm-algebras and Jordan algebras, 
see \cite{2,3,4,5,6,17,19,08,22,20,28,21,23,24,25,27,100,101} and references therein.
 But the extending structure theory for pre-Poisson algebras is still absent. This is the first motivation for writing this paper.
More precisely, we are devoted to investigating the ES-problem for pre-Poisson algebras:
 Assume that $A$ is a pre-Poisson algebra and $E$ is a vector space containing $A$ as a subspace. 
Characterize and classify all pre-Poisson algebraic structures on $E$ such 
that $A$ is a subalgebra of $E$ up to an isomorphism of pre-Poisson algebras whose restriction on $A$ is the identity map.

On the other hand, a bialgebraic structure on a given algebraic structure is obtained by a 
comultiplication together with compatibility conditions between the
multiplications and comultiplications. Lie bialgebras were introduced in the early 1980s by Drinfeld \cite{010}, which
have also been applied in the theory of classical integrable systems and are closely
related to the classical Yang-Baxter equation. In \cite{012,013}, V. Zhelyabin developed Drinfeld’s ideas to
introduce the notion of associative D-bialgebras, which are associative analogue of Lie bialgebras. Later, Aguiar developed
the notion of a balanced infinitesimal bialgebra \cite{011}, and an
antisymmetric infinitesimal bialgebra was considered in \cite{02}. Bai and Ni combined the ideas of 
Lie bialgebras and (commutative and cocommutative)
infinitesimal bialgebras in a uniform way, which leaded to the notion of a Poisson bialgebra \cite{04}. In \cite{09}, 
 a noncommutative Poisson bialgebra was developed. It is natural to combine Zinbiel bialgebras 
 and pre-Lie bialgebras \cite{01} in some certain way.
 This is another motivation for writing this paper. In detail,
 we develop a notion of a
pre-Poisson bialgebra, which  is both a Zinbiel bialgebra and a
pre-Lie bialgebra satisfying
some compatible conditions. Then we establish the equivalence between pre-Poisson bialgebras
and certain matched pairs of pre-Poisson algebras.
The study of coboundary pre-Poisson
bialgebras leads to the introduction of pre-Poisson Yang-Baxter
equation (PPYBE). A symmetric solution of the pre-Poisson Yang-Baxter equation naturally gives a 
(coboundary) pre-Poisson bialgebra.

 The paper is organized as follows. In Section 2, we study the ES-problem by the unified product for pre-Poisson algebras. 
   By means of unified product, we show that any pre-Poisson structure on $E$ 
such that $A$ is a pre-Poisson subalgebra is isomorphic to a unified product of $A$ by $V$, where $V$ is a complement of $A$ in $E$. With this tool, we construct an object $H^2_{p}(V, A)$ to give a theoretical answer for the ES-problem of pre-Poisson algebras.
 In Section 3, we discuss some special cases of unified products: crossed products, bicrossed products and flag extending structures. 
 In Section 4, we explore a bialgebra theory for pre-Poisson algebras and
establish the equivalence between matched pairs and pre-Poisson bialgebras. The study of coboundary
pre-Poisson bialgebras leads to the investigation of the pre-Poisson Yang-Baxter equation (PPYBE).
 A symmetric solution of the PPYBE naturally gives a coboundary pre-Poisson bialgebra. 

Throughout the paper, $k$ is a field.  All vector spaces and algebras are over $k$. 
 All algebras are finite-dimensional, although many results still hold in the infinite-dimensional case.
 
\section{Unified products for pre-Poisson algebras}

In this section, we study unified products for pre-Poisson algebras. We start with recalling 
some basic knowledge and extending structures for 
Zinbiel algebras and pre-Lie algebras. One can refer to \cite{01,02,4,5,6} for these facts.

A {\bf Zinbiel algebra} is a vector space $A$ together with a bilinear map $\ast:A\times A\longrightarrow A$ such that
 \begin{equation*}\label{za eq1.1}x\ast(y\ast z)=(y\ast x)\ast z+(x\ast y)\ast z,
 \end{equation*} 
 for all $x,y,z\in A$.

 A {\bf dendriform algebra } is a vector space $A$ together with two bilinear maps $\succ,\prec:A\times A\longrightarrow
 A$ such that
 \begin{equation*}(x\prec y)\prec z=x\prec (y\prec z)+x\prec (y\succ z)
,\end{equation*} 
\begin{equation*}x\succ( y\prec z)=(x\succ y)\prec z
,\end{equation*}
 \begin{equation*}x\succ( y\succ z)=(x\prec y)\succ z+(x\succ y)\succ z
,\end{equation*} 
for all $x,y,z\in A$.
Thus a Zinbiel algebra $(A,\ast)$ is indeed a dendriform algebra $(A,\succ,\prec)$ with $x\ast y=x\succ y=y\prec x$.

Let $(A,\ast)$ be a Zinbiel algebra. Define a binary operation
$\star:A\times A\longrightarrow
 A$ by   $$x\star y=x\ast
 y+y\ast
 x,$$ then $(A,\star)$ is a commutative associative algebra, which is called the {\bf associated commutative associative algebra} of $(A,\ast)$,
 denoted by $(\mathcal{C}(A),\star)$ or simply by
 $\mathcal{C}(A)$. We call
 $(A,\ast)$ a {\bf compatible Zinbiel
 algebra} structure on the commutative associative algebra 
$(\mathcal{C}(A),\star)$.  

A non-degenerate antisymmetric bilinear form $\omega$ on an associative algebra
$(A,\star)$ is called a {\bf Connes cocycle} if
 \begin{equation*}\omega(x\star y, z) + \omega(y \star z, x) + \omega(z\star x, y) = 0, ~~\forall~x, y, z\in A.\end{equation*} 
 
\begin{pro} Let $(A,\star)$ be a commutative associative algebra and $\omega$ be a Connes cocycle. 
Then there is a compatible Zinbiel algebra structure $\ast$ on $A$ defined by
\begin{equation*}\omega(x\ast y, z) = \omega(y,x \star z), ~~\forall~x, y, z\in A.\end{equation*} 
\end{pro}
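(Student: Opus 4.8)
The plan is to use the non-degeneracy of $\omega$ to translate every assertion into a statement about the associative product $\star$ paired against an arbitrary element, and then to reduce those statements to the commutativity and associativity of $\star$ together with the Connes cocycle condition.

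First I would check that the displayed formula really defines a bilinear operation. For fixed $x,y\in A$, the assignment $z\mapsto\omega(y,x\star z)$ is a linear functional on $A$; since $\omega$ is non-degenerate, the map $a\mapsto\omega(a,-)$ is a linear isomorphism $A\to A^{*}$, so there is a unique vector, which we denote $x\ast y$, with $\omega(x\ast y,z)=\omega(y,x\star z)$ for all $z\in A$. Bilinearity of $(x,y)\mapsto x\ast y$ is then forced by the bilinearity of $\star$ and $\omega$ together with the uniqueness in this representation.

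Next I would establish the \emph{compatibility} $x\ast y+y\ast x=x\star y$, since the Zinbiel identity will depend on it. For arbitrary $z\in A$ one has $\omega(x\ast y+y\ast x,z)=\omega(y,x\star z)+\omega(x,y\star z)$. The Connes cocycle condition gives $\omega(x\star y,z)=-\omega(y\star z,x)-\omega(z\star x,y)$, and applying the antisymmetry of $\omega$ and the commutativity $z\star x=x\star z$ turns the right-hand side into $\omega(x,y\star z)+\omega(y,x\star z)$. Hence $\omega(x\star y,z)=\omega(x\ast y+y\ast x,z)$ for all $z$, and non-degeneracy yields $x\star y=x\ast y+y\ast x$, showing in particular that $(A,\ast)$ is a compatible Zinbiel structure on $(\mathcal{C}(A),\star)$ once the Zinbiel axiom is verified.

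Finally I would verify the Zinbiel identity by pairing both sides with an arbitrary $w\in A$. On the left, $\omega\bigl(x\ast(y\ast z),w\bigr)=\omega(y\ast z,x\star w)=\omega\bigl(z,y\star(x\star w)\bigr)$; on the right, $\omega\bigl((y\ast x)\ast z+(x\ast y)\ast z,w\bigr)=\omega\bigl(z,\bigl((y\ast x)+(x\ast y)\bigr)\star w\bigr)=\omega\bigl(z,(x\star y)\star w\bigr)$ using the compatibility just proved. Commutativity and associativity of $\star$ give $y\star(x\star w)=(x\star y)\star w$, so the two expressions coincide for every $w$, and non-degeneracy of $\omega$ forces $x\ast(y\ast z)=(y\ast x)\ast z+(x\ast y)\ast z$. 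The computations are routine; the only points requiring care are the order of the argument (compatibility must be proved before the Zinbiel identity) and the sign bookkeeping when using antisymmetry to align the terms with the Connes cocycle relation, so I do not anticipate a genuine obstacle.
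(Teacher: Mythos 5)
Your proof is correct: the paper states this proposition without proof (it is a recollection of a known fact), and your argument — defining $x\ast y$ via non-degeneracy of $\omega$ (which gives an isomorphism $A\to A^{*}$ thanks to the standing finite-dimensionality assumption), deriving the compatibility $x\ast y+y\ast x=x\star y$ from the Connes cocycle condition together with antisymmetry and commutativity, and then reducing the Zinbiel identity to associativity and commutativity of $\star$ — is the standard and intended one. All the sign and order-of-argument bookkeeping in your computation checks out.
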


\begin{defi} A {\bf representation} of a Zinbiel algebra $(A,\ast)$ is a triple $(V,l, r)$, where $V$ is a
vector space and $l,r : A \longrightarrow \hbox{End}
(V)$ are two linear maps satisfying the following relations, for all
$x,y\in A$,
\begin{equation*}\label{rza eq1.1}l(x)l(y)=l(x\ast y)+l(y\ast x)=l(y)l(x),\end{equation*}
\begin{equation*}\label{rza eq1.2}l(x)r(y)=r(x\ast y)=r(y)r(x)+r(y)l(x).\end{equation*}
\end{defi}

Let $A$ and $V$ be vector spaces. For
a linear map $f: A \longrightarrow \hbox{End} (V)$, define a linear
map $f^{*}: A \longrightarrow \hbox{End} (V^{*})$ by $\langle
f^{*}(x)u^{*},v\rangle=-\langle u^{*},f(x)v\rangle$ for all $x\in A,
u^{*}\in V^{*}, v\in V$, where $\langle \ , \ \rangle$ is the usual
pairing between $V$ and $V^{*}$.

\begin{pro} \label{zr} Let $(V,l,r)$ be a
representation of a Zinbiel algebra $(A,\ast)$. Then
\begin{enumerate}
	\item $(V^{*},-l^{*}-r^{*},r^{*})$ is also a
	representation of $(A,\ast)$. We call it the {\bf dual representation} of $(V,l,r)$.
	\item $(V,l+r)$ is a representation of the associated
	commutative associative algebra $(\mathcal{C}(A),\star)$.
	\item $(V,l)$ is a representation of the associated
	commutative associative algebra $(\mathcal{C}(A),\star)$.
\end{enumerate}
\end{pro}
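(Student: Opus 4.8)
The plan is to verify each of the three assertions directly from the defining relations of a Zinbiel representation. Recall that a representation of the commutative associative algebra $(\mathcal C(A),\star)$ is just a linear map $\rho\colon A\to\mathrm{End}(V)$ satisfying $\rho(x\star y)=\rho(x)\rho(y)$ for all $x,y\in A$ (the identity $\rho(x)\rho(y)=\rho(y)\rho(x)$ is then automatic since $x\star y=y\star x$). Items (2) and (3) are short computations; item (1) is where essentially all the care goes, because of the order reversal and sign introduced by dualization.

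For (3): since $x\star y=x\ast y+y\ast x$, linearity of $l$ gives $l(x\star y)=l(x\ast y)+l(y\ast x)$, and the first defining relation of a Zinbiel representation says exactly that this equals $l(x)l(y)$ (and also $l(y)l(x)$), so $(V,l)$ is a module over $(\mathcal C(A),\star)$. For (2): expand $(l+r)(x\star y)=l(x\ast y)+l(y\ast x)+r(x\ast y)+r(y\ast x)$; use $l(x\ast y)+l(y\ast x)=l(x)l(y)$ from the first relation, then $r(x\ast y)=l(x)r(y)$ and $r(y\ast x)=l(y)r(x)$ from the relation $l(u)r(w)=r(u\ast w)$, and finally $l(y)r(x)=r(y\ast x)=r(x)r(y)+r(x)l(y)$ using the other half $r(u\ast w)=r(w)r(u)+r(w)l(u)$ (with $u,w$ suitably relabeled). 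Regrouping the four resulting terms yields $\big(l(x)+r(x)\big)\big(l(y)+r(y)\big)=(l+r)(x)(l+r)(y)$, which is the required identity.

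For (1), write $T^{\sharp}$ for the operator on $V^{*}$ defined by $\langle T^{\sharp}u^{*},v\rangle=-\langle u^{*},Tv\rangle$, so that $f^{*}(x)=f(x)^{\sharp}$ and, as a direct consequence of the definition, $S^{\sharp}T^{\sharp}=-(TS)^{\sharp}$. Set $L:=-l^{*}-r^{*}=(-l-r)^{\sharp}$ and $R:=r^{*}=r^{\sharp}$; these are linear maps $A\to\mathrm{End}(V^{*})$, and one must check the two Zinbiel-representation relations. Pushing every composition through $(-)^{\sharp}$ gives $L(x)L(y)=-\big((l(y)+r(y))(l(x)+r(x))\big)^{\sharp}$; expanding the operator product inside and applying the two defining relations of $(V,l,r)$ collapses it to $-\big(l(x\ast y)+l(y\ast x)+r(x\ast y)+r(y\ast x)\big)^{\sharp}$, which by linearity of $(-)^{\sharp}$ equals $L(x\ast y)+L(y\ast x)$; running the same computation with $x$ and $y$ interchanged shows $L(y)L(x)$ equals the same thing, so the first relation holds. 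The second relation $L(x)R(y)=R(x\ast y)=R(y)R(x)+R(y)L(x)$ is verified in the same way: each side, after passing through $(-)^{\sharp}$, reduces to $\big(r(x\ast y)\big)^{\sharp}$, the sign in $S^{\sharp}T^{\sharp}=-(TS)^{\sharp}$ cancelling against the minus sign in the definition of $L$.

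The argument uses no idea beyond the two defining identities, so the only genuine obstacle is the sign-and-order bookkeeping in part (1): once the rule $S^{\sharp}T^{\sharp}=-(TS)^{\sharp}$ is isolated and one is careful to apply each Zinbiel-representation relation with the correct arguments, every term matches mechanically.
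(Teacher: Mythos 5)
Your proof is correct: the verifications of (2) and (3) follow immediately from the two defining identities of a Zinbiel representation, and your sign rule $S^{\sharp}T^{\sharp}=-(TS)^{\sharp}$ together with the observation that $-l^{*}-r^{*}=(-(l+r))^{\sharp}$ makes the dual-representation check in (1) go through exactly as you describe. The paper itself omits the proof of this proposition (it is recalled from the cited references), and your argument is the standard direct verification one would supply, so there is nothing further to reconcile.
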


\begin{ex} Let $(A,\ast)$ be a Zinbiel algebra,
	and
$L_{\ast}, R_{\ast} : A\longrightarrow \hbox{End} (A)$ be the
linear maps defined by $L_{\ast}(x)(y)=R_{\ast}(y)(x)=x\ast y$ for all $x,y\in A$. Then $(A,L_{\ast}, R_{\ast})$ is a representation of $(A,\ast)$,
which is called the {\bf regular representation} of $(A,\ast)$. 
Moreover, $(A^{*},-L_{\star}^{*}=-L^{*}_{\ast}-R^{*}_{\ast},
R_{\ast}^{*})$ is the dual representation of $(A,L_{\ast}, R_{\ast})$.
\end{ex}

A {\bf pre-Lie algebra} is a vector space $A$ together with a bilinear map $\circ:A \times A \longrightarrow A$ such that
	\begin{align*}
		x\circ(y\circ z)-(x\circ y)\circ z=y\circ(x\circ z)-(y\circ x)\circ z,
	\end{align*} 
for all $x,y,z\in A$. Define 
$[x, y]=x\circ y -y \circ x$ for all $x, y\in A$. Then $(A, [\ , \ ])$ is a Lie algebra, which is called the {\bf sub-adjacent Lie algebra} of $(A,\circ)$ and is denoted by $\frak g(A)$. Moreover, $(A,\circ)$ is called a {\bf compatible pre-Lie algebra} of $(A, [\ , \ ])$.

A Lie algebra $(A, [ \ , \ ])$ is called {\bf symplectic} if there exists a non-degenerate antisymmetric 2-cocycle $\omega$ on $A$, 
that is,
\begin{equation*}
	\omega([x, y], z) + \omega([y, z], x) + \omega([z, x], y) = 0, ~~\forall~x, y, z\in A. 
\end{equation*}
In this case, we call $\omega$ a {\bf symplectic} form on the Lie algebra $(A, [ \ , \ ])$.

\begin{pro}
Let $(A, [ \ , \ ],\omega)$ be a symplectic Lie algebra. Then there exists a
compatible pre-Lie algebra structure $\circ$ on $A$ given by
\begin{equation*}\omega(x \circ y, z)=-\omega(y, [x, z]), ~~\forall~x, y, z\in A. \end{equation*}
\end{pro}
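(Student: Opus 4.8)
The plan is to exploit the non-degeneracy of $\omega$ to \emph{define} $\circ$ by the stated formula, and then to check the two things that ``compatible pre-Lie algebra structure'' requires: that $\circ$ satisfies the pre-Lie identity, and that the Lie bracket $x\circ y-y\circ x$ induced by $\circ$ coincides with the given $[\ ,\ ]$. (This is the pre-Lie analogue of the classical fact that a symplectic Lie algebra carries a compatible left-symmetric product.)

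First I would note that, $A$ being finite-dimensional and $\omega$ non-degenerate, the map $w\mapsto\omega(w,-)$ is a linear isomorphism $A\to A^{*}$. Hence for each pair $x,y\in A$ the linear functional $z\mapsto-\omega(y,[x,z])$ is represented by a unique vector, which we take to be $x\circ y$; this is precisely the asserted relation. Bilinearity of $\circ$ follows at once from bilinearity of $[\ ,\ ]$ and $\omega$ together with non-degeneracy, so $\circ$ is a well-defined bilinear operation on $A$.

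Next I would rewrite the $2$-cocycle condition in the form that drives the computation: using antisymmetry of $\omega$ (and of $[\ ,\ ]$), the identity $\omega([x,y],z)+\omega([y,z],x)+\omega([z,x],y)=0$ is equivalent to
\begin{equation*}
	\omega([x,y],z)=\omega(x,[y,z])-\omega(y,[x,z]),\qquad\forall\,x,y,z\in A .
\end{equation*}
Pairing $x\circ y-y\circ x$ with an arbitrary $z$ and applying the defining formula twice gives $\omega(x\circ y-y\circ x,z)=\omega(x,[y,z])-\omega(y,[x,z])$, which by the displayed identity equals $\omega([x,y],z)$; non-degeneracy then yields $x\circ y-y\circ x=[x,y]$. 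Thus $(A,[\ ,\ ])$ is the sub-adjacent Lie algebra of $(A,\circ)$, so once $\circ$ is shown to be pre-Lie it is automatically compatible.

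Finally I would verify the pre-Lie identity $x\circ(y\circ z)-(x\circ y)\circ z=y\circ(x\circ z)-(y\circ x)\circ z$ by pairing the difference of the two sides with an arbitrary $w$ and invoking non-degeneracy. Applying the defining formula twice one gets $\omega(x\circ(y\circ z),w)=-\omega(y\circ z,[x,w])=\omega(z,[y,[x,w]])$, and symmetrically $\omega(y\circ(x\circ z),w)=\omega(z,[x,[y,w]])$, while $\omega((x\circ y)\circ z,w)=-\omega(z,[x\circ y,w])$ and likewise for $(y\circ x)\circ z$. Collecting the four terms, the pairing of $x\circ(y\circ z)-(x\circ y)\circ z-y\circ(x\circ z)+(y\circ x)\circ z$ with $w$ reduces to $\omega\bigl(z,\,[y,[x,w]]-[x,[y,w]]\bigr)+\omega\bigl(z,\,[x\circ y-y\circ x,\,w]\bigr)$. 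By the previous step $x\circ y-y\circ x=[x,y]$, and by the Jacobi identity $[y,[x,w]]-[x,[y,w]]=-[[x,y],w]$, so the two summands cancel and the whole expression vanishes. Since $w$ is arbitrary, the pre-Lie identity follows. I do not expect a genuine obstacle here; the only care needed is in tracking signs and the order of the arguments when unwinding the defining formula, and one sees that the $2$-cocycle condition is used exactly once (to identify the sub-adjacent bracket) and the Jacobi identity exactly once.
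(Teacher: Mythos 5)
Your proof is correct, and the computations check out: the cocycle identity rewritten as $\omega([x,y],z)=\omega(x,[y,z])-\omega(y,[x,z])$ does give $x\circ y-y\circ x=[x,y]$, and the pairing of the pre-Lie associator difference with $w$ does reduce, via that compatibility and one application of the Jacobi identity, to zero. The paper states this proposition without proof (it is recalled as a standard fact in the preliminaries), and your argument is exactly the standard one, so there is nothing to reconcile.
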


\begin{defi}
Let $A$ be a pre-Lie algebra and $V$ a vector space. Assume that $\rho,\mu:A\longrightarrow End(V)$
are two linear maps. $(V,\rho,\mu)$ is called
a {\bf bimodule} of $A$ if
\begin{equation*}\rho(x)\rho(y)-\rho(x\circ y) = \rho(y)\rho(x) -\rho(y\circ x), \end{equation*}
\begin{equation*}\rho(x)\mu(y)-\mu(y)\rho(x) = \mu(x\circ y) - \mu(y)\mu(x), \end{equation*}
for all $x,y\in A$.
\end{defi}

 \begin{pro} \label{pl} Let $(V,\rho,\mu)$ be a
 	representation of a pre-Lie algebra $(A,\circ)$. Then
 	\begin{enumerate}
 		\item (i) $(V^{*}, l^{*}-r^{*},-r^{*})$ is also a
 		representation of $(A,\circ)$. We call it the {\bf dual representation} of $(V,\rho,\mu)$.
 		\item  $(V,l-r)$ is a representation of the sub-adjacent Lie algebra  $(A,[\ ,\ ])$.
 		\item  $(V,l)$ is a representation of the sub-adjacent Lie algebra  $(A,[\ ,\ ])$.
 	\end{enumerate}
 \end{pro}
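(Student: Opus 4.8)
The plan is to verify the three assertions by direct computation, in the order (c), (b), (a), since each is used in the next. Write $\rho,\mu\colon A\to\mathrm{End}(V)$ for the two structure maps of the bimodule (these are the maps called $l,r$ in the statement), so that the defining identities read
\begin{align*}
&\rho(x)\rho(y)-\rho(x\circ y)=\rho(y)\rho(x)-\rho(y\circ x),\\
&\rho(x)\mu(y)-\mu(y)\rho(x)=\mu(x\circ y)-\mu(y)\mu(x).
\end{align*}
For (c): since $x\circ y-y\circ x=[x,y]$, the first identity is literally $[\rho(x),\rho(y)]=\rho([x,y])$, which says that $(V,\rho)$ is a representation of $\mathfrak{g}(A)$. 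For (b): I would antisymmetrize the second identity in $x,y$ and combine it with the first; a short expansion then shows that the two sides of $[(\rho-\mu)(x),(\rho-\mu)(y)]=(\rho-\mu)([x,y])$ agree --- the pure-$\rho$ terms collapse by the first axiom, and the mixed and pure-$\mu$ terms are accounted for by the antisymmetrized second axiom --- so $(V,\rho-\mu)$ is a representation of $\mathfrak{g}(A)$.

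For (a) the one tool needed is the contravariant product rule for dualization. For $\phi\in\mathrm{End}(V)$ let $\phi^{*}\in\mathrm{End}(V^{*})$ be defined by $\langle\phi^{*}u^{*},v\rangle=-\langle u^{*},\phi v\rangle$, so that $f^{*}(x)=f(x)^{*}$ in the paper's notation; then $(\phi\psi)^{*}=-\psi^{*}\phi^{*}$, equivalently $\phi^{*}\psi^{*}=-(\psi\phi)^{*}$ --- note the extra minus sign compared with the ordinary transpose, which is the source of the sign bookkeeping below. Put $L:=\rho^{*}-\mu^{*}$ and $R:=-\mu^{*}$. The first bimodule axiom for $(V^{*},L,R)$ is equivalent, exactly as in (c), to $L$ being a representation of $\mathfrak{g}(A)$; but $L=(\rho-\mu)^{*}$ is the dual of the Lie representation $\rho-\mu$ constructed in (b), and the dual of a Lie representation is again a Lie representation, so this axiom is automatic. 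For the second bimodule axiom for $(V^{*},L,R)$, I would expand $L$ and $R$ and cancel the common term, reducing it to $\rho^{*}(x)\mu^{*}(y)-\mu^{*}(y)\rho^{*}(x)-\mu^{*}(x)\mu^{*}(y)=\mu^{*}(x\circ y)$; this is exactly what one obtains by applying $(-)^{*}$ term by term to the second bimodule axiom of $(V,\rho,\mu)$ and simplifying with the product rule above. That completes (a).

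There is no conceptual obstacle here: the whole argument is bookkeeping, and the only genuine risk is a sign error. The two places to be careful are the minus sign in $(\phi\psi)^{*}=-\psi^{*}\phi^{*}$ and the asymmetric roles of $-\mu^{*}$ and $\rho^{*}-\mu^{*}$ in the dual triple. As sanity checks I would (i) specialize (a)--(c) to the bimodule of $A$ on itself given by left and right multiplication and match them against the known dual representation of a pre-Lie algebra, and (ii) confirm that these statements parallel the Zinbiel-algebra statements in Proposition~\ref{zr}, so that the two families of constructions are compatible and can be assembled into representations of pre-Poisson algebras later on.
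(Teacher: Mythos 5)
Your computations are correct and are exactly the standard verification: part (c) is the first bimodule axiom rewritten as $[\rho(x),\rho(y)]=\rho([x,y])$, part (b) follows from antisymmetrizing the second axiom (your cancellation of the mixed and pure-$\mu$ terms checks out), and part (a) reduces, via the sign rule $(\phi\psi)^{*}=-\psi^{*}\phi^{*}$ for the paper's convention $\langle f^{*}(x)u^{*},v\rangle=-\langle u^{*},f(x)v\rangle$, to dualizing (b) and the second axiom term by term. The paper states this proposition without proof as a recalled fact from the cited literature, so there is nothing to compare against beyond noting that your argument is the intended one; the only caveat is the paper's typo of writing $l,r$ for what are $\rho,\mu$, which you resolve correctly.
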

 
 \begin{ex} Let $(A,\circ)$ be a pre-Lie algebra,
 	and
 	$L_{\circ}, R_{\circ} : A\longrightarrow \hbox{End} (A)$ be the
 	linear maps defined by $L_{\circ}(x)(y)=R_{\circ}(y)(x)=x\ast y$ for all $x,y\in A$. Then $(A,L_{\circ}, R_{\circ})$ is a representation of $(A,\circ)$,
 	which is called the {\bf regular representation} of $(A,\circ)$. 
 	Moreover, $(A^{*},\mathrm{ad}^{*}= L^{*}_{\circ}-R^{*}_{\circ},
 	-R_{\circ}^{*})$ is the dual representation of $(A,L_{\circ}, R_{\circ})$.
 \end{ex}

Now we come to study the representation theory of pre-Poisson algebras.

\begin{defi} \cite{1}
	A {\bf pre-Poisson algebra} is a triple $(A,
\ast,\circ )$, where $(A, \ast )$ is a Zinbiel algebra,
$(A,\circ )$ is a pre-Lie algebra and the following conditions hold:
\begin{equation}\label{ppa eq1.1}(x\circ y-y \circ x)\ast z=x\circ (y\ast z)-y\ast( x\circ
z),\end{equation}
\begin{equation}\label{ppa eq1.2}(x\ast y+y \ast x)\circ z=x\ast (y\circ z)+y\ast( x\circ
z),\end{equation} for all $x,y,z\in A$.\end{defi}

Let $(A, \ast,\circ )$ be a pre-Poisson algebra. Define
	$$x\star y=x\ast y+y\ast x,~[x,y]=x\circ y-y\circ x,~\forall~x,y\in A.$$ Then
	$(A, \star,[ \ , \ ] )$ is a Poisson algebra, that is, $(A,\star)$ is a commutative associative algebra, $(A,[\ .\ ])$ is  a Lie algebra and the following equation holds:
	\begin{eqnarray}
		[z,x\star y]=[z,x]\star y+x\star [z,y],\;\forall x,y,z\in A.
	\end{eqnarray}
	Moreover, $(A, \star,[ \ , \ ] )$ is called the {\bf sub-adjacent Poisson algebra} of $(A,\ast,\circ)$, and $(A,\ast,\circ)$ is called a {\bf compatible pre-Poisson algebra} of $(A, \star,[ \ , \ ] )$.
\begin{ex} Let $A$ be a vector space with a basis $\{e_1, e_2\}$. Define two bilinear
	maps $\ast,\circ:A \times A \longrightarrow A$ respectively by
	\begin{equation*}
		e_1\ast e_1=ae_2,~e_1\ast e_2=0,~e_2\ast e_1=0,~e_2\ast e_2=0,
	\end{equation*}
	\begin{equation*}
		e_1\circ e_1=be_1+ce_2,~e_1\circ e_2=be_2,~e_2\circ e_1=be_2,~e_2\circ e_2=0,
	\end{equation*}
	where $a,b,c\in k$. By a direct computation, $(A,\ast,\circ)$ is a pre-Poisson algebra.
\end{ex}
	
	\begin{defi} 
		\begin{enumerate}
			\item \cite{04}	Let $(A,\star,[\ ,\ ])$ be a Poisson algebra,
			$V$ be a vector space and $f,g: A
			\longrightarrow \hbox{End} (V)$ be linear maps. Then
			$(V,f,g)$ is called a
			{\bf representation} of $ (A,\star,[\ ,\ ])$ if $(V,f)$ is
			a representation of $(A,\star)$, $(V,g)$ is a representation of
			$(A,[\ ,\ ])$ and they satisfy the following compatible
			conditions:
			\begin{eqnarray}
				&&g(x\star y)=f(y)g(x)+f(x)g(y),\\
				&& f([x,y])=g(x)f(y)-f(y)g(x),\;\forall x,y\in A.
			\end{eqnarray}
		\item 	Let $(A,\ast,\circ)$ be a pre-Poisson algebra,
		$V$ be a vector space and $l,r,\rho,\mu: A
		\longrightarrow \hbox{End} (V)$ be four linear maps. Then
		$(V,l, r,\rho,\mu)$ is called a
		{\bf representation} of $(A,\ast,\circ)$ if $(V,l,r)$ is
		a representation of $(A,\ast)$, $(V,\rho,\mu)$ is a representation of
		$(A,\circ)$ and they satisfy the following compatible
		conditions:
		\begin{equation}\label{rppa 1.1}l(x\circ y-y\circ x)=\rho(x)l(y)-l(y)\rho(x)
			,\end{equation}
		\begin{equation}\label{rppa 1.2}\rho(x\ast y+y\ast x)=l(x)\rho(y)+l(y)\rho(x)
			,\end{equation}
		\begin{equation}\label{rppa 1.3}r(x)(\rho(y)-\mu(y))=l(y)\mu(x)-\mu(y\ast x),\end{equation}
		\begin{equation}\label{rppa 1.4}\mu(x)(l(y)+r(y))=r(y\circ x)+l(y)\mu(x),\end{equation}
		\begin{equation}\label{rppa 1.5}r(x)(\rho(y)-\mu(y))=\rho(y)r(x)-r(y\circ x),\end{equation}
		for all $x,y\in A$.
		\end{enumerate}
	 \end{defi}
By (\ref{rppa 1.1}) and (\ref{rppa 1.2}), we obtain
\begin{equation}\label{rppa 1.6}\rho(x\ast y+y\ast
x)=\rho(y)l(x)+\rho(x)l(y) .\end{equation}

By (\ref{rppa 1.3}) and (\ref{rppa 1.5}), we get
\begin{equation}\label{rppa 1.7}r(y\circ x)-\mu(y\ast
x)= \rho(y)r(x)-l(y)\mu(x).\end{equation}

\begin{pro} Let $(A,\ast,\circ)$ be a pre-Poisson algebra
 and $(V,l, r,\rho,\mu)$ be a representation of
$(A,\ast,\circ)$. Then
\begin{enumerate}
	\item $(V^{*},-l^{*}-r^{*},r^{*},\rho^{*}-\mu^{*},-\mu^{*})$
	is also a representation of $(A,\ast,\circ)$. We call it the {\bf dual
		representation} of $(V,l, r,\rho,\mu)$.
	\item $(V,l+r,\rho-\mu)$ 
	is a representation of $(A,\star,[\ ,\ ])$.
	\item $(V,l ,\rho )$ 
	is a representation of $(A,\star,[\ ,\ ])$.
\end{enumerate}
\end{pro}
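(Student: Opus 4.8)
The plan is to handle the three assertions separately, in the order (3), (2), (1), each time splitting the verification into an \emph{associative part}, a \emph{Lie part}, and a \emph{compatibility part}. Throughout I write $x\star y=x\ast y+y\ast x$ and $[x,y]=x\circ y-y\circ x$, so that the Poisson-representation axioms become $g(x\star y)=f(y)g(x)+f(x)g(y)$ and $f([x,y])=g(x)f(y)-f(y)g(x)$.

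For (3): that $(V,l)$ is a representation of $(\mathcal{C}(A),\star)$ and that $(V,\rho)$ is a representation of $(A,[\ ,\ ])$ are exactly Proposition \ref{zr}(3) and Proposition \ref{pl}(3). For the compatibility part, the first Poisson axiom $\rho(x\star y)=l(y)\rho(x)+l(x)\rho(y)$ is literally \eqref{rppa 1.2}, and the second, $l([x,y])=\rho(x)l(y)-l(y)\rho(x)$, is literally \eqref{rppa 1.1}; so (3) is immediate. For (2): the associative and Lie parts are Proposition \ref{zr}(2) and Proposition \ref{pl}(2). For the first Poisson axiom one must show $(\rho-\mu)(x\star y)=(l+r)(y)(\rho-\mu)(x)+(l+r)(x)(\rho-\mu)(y)$; expanding $x\star y=x\ast y+y\ast x$, compute $\rho(x\ast y+y\ast x)$ from \eqref{rppa 1.2} and $\mu(x\ast y+y\ast x)$ from \eqref{rppa 1.3} (applied to both $x\ast y$ and $y\ast x$), and collect terms — everything regroups into $(l(x)+r(x))(\rho(y)-\mu(y))+(l(y)+r(y))(\rho(x)-\mu(x))$. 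For the second Poisson axiom $(l+r)([x,y])=(\rho-\mu)(x)(l+r)(y)-(l+r)(y)(\rho-\mu)(x)$, peel off $l(x\circ y-y\circ x)$ via \eqref{rppa 1.1} and treat $r(x\circ y-y\circ x)$ using \eqref{rppa 1.4} and \eqref{rppa 1.5}; the two contributions combine to exactly the required commutator.

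For (1): by Proposition \ref{zr}(1), $(V^{*},-l^{*}-r^{*},r^{*})$ is a representation of the Zinbiel algebra $(A,\ast)$, and by Proposition \ref{pl}(1), $(V^{*},\rho^{*}-\mu^{*},-\mu^{*})$ is a representation of the pre-Lie algebra $(A,\circ)$. It then remains to check \eqref{rppa 1.1}–\eqref{rppa 1.5} for the operators $\bar l=-l^{*}-r^{*}$, $\bar r=r^{*}$, $\bar\rho=\rho^{*}-\mu^{*}$, $\bar\mu=-\mu^{*}$. The mechanism is: from $\langle f^{*}(x)u^{*},v\rangle=-\langle u^{*},f(x)v\rangle$ one gets the rule $(ST)^{*}=-T^{*}S^{*}$ for operators $S,T\in\mathrm{End}(V)$; so, after expanding each barred operator and pushing every product into the form $(\,\text{operator on }V\,)^{*}$, the identity to be proved turns into a statement purely about operators on $V$, which one recognizes as a linear combination of \eqref{rppa 1.1}–\eqref{rppa 1.5} together with their images under $x\leftrightarrow y$. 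For example, the dualized version of \eqref{rppa 1.1} comes out equivalent to $\eqref{rppa 1.1}+\eqref{rppa 1.4}+\eqref{rppa 1.5}$ (with suitable swaps), and the other four are analogous, with \eqref{rppa 1.6} and \eqref{rppa 1.7} available to shorten the matching.

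I expect the only genuine difficulty to be the sign-and-order bookkeeping in part (1): each of the five identities has several terms, and dualization simultaneously reverses products and introduces a sign, so errors are easy. I would keep this under control by always reducing both sides to expressions of the form $(\cdot)^{*}$ and comparing what sits inside the stars, verifying each of the five cases in turn; parts (2) and (3) are short and essentially bookkeeping-free by comparison.
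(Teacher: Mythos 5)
Your proposal is correct and follows essentially the same route as the paper: parts (2) and (3) by direct verification of the two Poisson-representation axioms from \eqref{rppa 1.1}--\eqref{rppa 1.5}, and part (1) by invoking the dual Zinbiel and pre-Lie representations and then dualizing the five compatibility identities via the pairing (the paper carries out exactly one such identity, \eqref{rppa 1.2}, using \eqref{rppa 1.2}, \eqref{rppa 1.4}, \eqref{rppa 1.6}, \eqref{rppa 1.7}, and leaves the rest — and parts (2), (3) — to the reader). Your outline is, if anything, slightly more explicit than the paper's about which identities feed into which verification.
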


\begin{proof} In the light of Propositions
\ref{zr} and \ref{pl}, $(V^{*},-l^{*}-r^{*},r^{*})$ is a
representation of $(A,\ast)$ and
$(V^{*},\rho^{*}-\mu^{*},-\mu^{*})$ is  a
representation of $(A,\circ)$. We only need to check that (\ref{rppa 1.1})-(\ref{rppa 1.5}) hold for
$(V^{*},-l^{*}-r^{*},r^{*},\rho^{*}-\mu^{*},-\mu^{*})$.
As an example, we give an explicit proof of (\ref{rppa 1.2}). Thanks
to Eqs. (\ref{rppa 1.2}), (\ref{rppa 1.4}), (\ref{rppa 1.6}) and (\ref{rppa 1.7}),
we have for any $x,y\in A,u\in V,v^{*}\in V^{*} $,
\begin{eqnarray*}&&\langle-(l^{*}(x)+r^{*}(x))(\rho^{*}(y)-\mu^{*}(y))v^{*}
-(l^{*}(y)+r^{*}(y))(\rho^{*}(x)-\mu^{*}(x))v^{*}
\\&&-(\rho^{*}-\mu^{*})(x\ast y+y\ast x)v^{*},u\rangle\\&=&
\langle
-l^{*}(x)\rho^{*}(y)v^{*}-r^{*}(x)\rho^{*}(y)v^{*}+l^{*}(x)\mu^{*}(y)v^{*}
+r^{*}(x)\mu^{*}(y)v^{*} -
l^{*}(y)\rho^{*}(x)v^{*}\\&&-r^{*}(y)\rho^{*}(x)v^{*}+l^{*}(y)\mu^{*}(x)v^{*}
+r^{*}(y)\mu^{*}(x)v^{*}
-(\rho^{*}-\mu^{*})(x\ast y+y\ast x)v^{*},u\rangle
\\&=&\langle
v^{*},-\rho(y)(l(x)+r(x))u+\mu(y)(l(x)+r(x))u
-\rho(x)(l(y)+r(y))u\\&&+
\mu(x)(l(y)+r(y))u+\rho(x\ast y+y\ast
x)u-\mu(x\ast y+y\ast x)u\rangle\\& =&0,\end{eqnarray*} which
indicates that (\ref{rppa 1.2}) holds for substituting $(l,\rho)$ by 
$(-l^{*}-r^{*},\rho^{*}-\mu^{*})$.
\end{proof}

\begin{ex} Let $(A,\ast,\circ)$ be a pre-Poisson algebra.
Then $(A,L_{\ast}, R_{\ast},L_{\circ}, R_{\circ})$ is a
representation of $(A,\ast,\circ)$, which is called the {\bf regular
representation} of $(A,\ast,\circ)$. Moreover, 
$(A^{*},-L_{\star}^{*},
R_{\ast}^{*},\mathrm{ad}^{*}$,
$-R_{\circ}^{*})$ is the  dual
representation of $(A,L_{\ast}, R_{\ast},L_{\circ}, R_{\circ})$.
Furthermore, $(A,  {L} _{\ast},  {L} _{\circ})$  and 
 $(A^{*},- {L}^{*}_{\ast},  {L}^{*}_{\circ})$ are representations of $(A,\star,[\ ,\ ])$.
\end{ex}

\begin{defi} \cite{5}
		Let $(A,\ast_{1})$ be a Zinbiel algebra and $V$ be a vector space.
 A  {\bf Zinbiel extending datum of $(A,\ast_{1})$ through $V$} 
 is a system $\Omega(A, V) =(l_{1},r_{1},l_{2},r_{2},f,\ast_{2})$ consisting of four linear maps 
 $ l_{1},r_{1}: A \longrightarrow End(V), ~~ l_{2},r_{2} : V \longrightarrow End(A)$ and two bilinear maps
	$f: V\times V \longrightarrow A,~~\ast_{2}: V \times V \longrightarrow V.$
	Denote by $A\natural V $ the vector space $ A \oplus V$ with the bilinear map $\ast:(A\oplus V)\times(A \oplus V)\longrightarrow A \oplus V$ 
defined by
	\begin{align}
		(a,x)\ast(b,y)=(a\ast_{1}b+l_{2}(x)b+r_{2}(y)a+f(x,y),x\ast_{2}y+l_{1}(a)y+r_{1}(b)x),\label{4}
	\end{align}
	for all $a,b\in A$ and $x,y\in V$. The object $A\natural V $ is
 called the \textit{unified product} of $(A,\ast_{1})$ and $V$ if $(A \oplus V,\ast)$ is a Zinbiel algebra.
In this case, the Zinbiel extending datum $\Omega(A, V)$
     is called a Zinbiel extending structure of $(A,\ast_{1})$ through $V$.
	\end{defi}

	\begin{thm}\cite{5}\label{22}
	Let $(A,\ast_{1})$ be a Zinbiel algebra, $V$ be a vector space and $\Omega(A, V) =(l_{1},r_{1},l_{2},r_{2},f,\ast_{2})$ be a 
\textit{Zinbiel extending datum of $(A,\ast_{1})$ through $V$}. 
Then $A\natural V$ is a \textit{unified product} of $(A,\ast_{1})$ and $V$
 if and only if the following conditions hold for all $a,b\in A$ and $x,y,z\in V$,
		\begin{equation}\label{za1}
			(V,l_{1},r_{1})~\hbox{ is a representation of a Zinbiel algebra}~ (A,\ast_{1}),	\end{equation}
			\begin{equation}\label{za2}l_{1}(a)(x\ast_{2}y)=(l_{1}(a)x)\ast_{2}y+(r_{1}(a)x)\ast_{2}y+l_{1}(l_{2}(x)a)y+l_{1}(r_{2}(x)a)y,\end{equation}			\begin{equation}\label{za3}x\ast_{2}(l_{1}(a)y)+r_{1}(r_{2}(y)a)x=(l_{1}(a)x)\ast_{2}y+(r_{1}(a)x)\ast_{2}y+l_{1}(l_{2}(x)a)y+l_{1}(r_{2}(x)a)y,\end{equation}
			\begin{equation}\label{za4}x\ast_{2}(r_{1}(a)y)+r_{1}(r_{2}(y)a)x=r_{1}(a)(y\ast_{2}x)+r_{1}(a)(x\ast_{2}y),\end{equation}
			\begin{equation}\label{za5}a\ast_{1}(r_{2}(x)b)+r_{2}(l_{1}(b)x)a=r_{2}(x)(b\ast_{1}a+a\ast_{1}b),\end{equation}
\begin{equation}\label{za6}a\ast_{1}(l_{2}(x)b)+r_{2}(r_{1}(b)x)a=(l_{2}(x)a)\ast_{1}b+(r_{2}(x)a)\ast_{1}b+l_{2}(l_{1}(a)x)b+l_{2}(r_{1}(a)x)b,\end{equation}
			\begin{equation}\label{za7}l_{2}(x)(a\ast_{1}b)=(l_{2}(x)a)\ast_{1}b+(r_{2}(x)a)\ast_{1}b+l_{2}(l_{1}(a)x)b+l_{2}(r_{1}(a)x)b,\end{equation}
			\begin{equation}\label{za8}a\ast_{1}f(x,y)+r_{2}(x\ast_{2}y)a=r_{2}(y)(l_{2}(x)a)+r_{2}(y)(r_{2}(x)a)+f(l_{1}(a)x,y)+f(r_{1}(a)x,y),\end{equation}
			\begin{equation}\label{za9}l_{2}(x)(r_{2}(y)a)+f(x,l_{1}(a)y)=r_{2}(y)(r_{2}(x)a)+r_{2}(y)(l_{2}(x)a)+f(l_{1}(a)x,y)+f(r_{1}(a)x,y),\end{equation}
			\begin{equation}\label{za10}l_{2}(x)(l_{2}(y)a)+f(x,r_{1}(a)y)=l_{2}(x\ast_{2}y)a+l_{2}(y\ast_{2}x)a+f(x,y)\ast_{1}a+f(y,x)\ast_{1}a,\end{equation}
			\begin{equation}\label{za11}l_{2}(x)f(y,z)+f(x,y\ast_{2}z)=r_{2}(z)f(x,y)+r_{2}(z)f(y,x)+f(x\ast_{2}y,z)+f(y\ast_{2}x,z),\end{equation}
			\begin{equation}\label{za12}x\ast_{2}(y\ast_{2}z)+r_{1}(f(y,z))x=(x\ast_{2}y)\ast_{2}z+(y\ast_{2}x)\ast_{2}z+l_{1}(f(x,y))z+l_{1}(f(y,x))z.
		\end{equation}
	\end{thm}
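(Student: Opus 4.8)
The plan is to verify by direct computation that the bilinear operation $\ast$ defined on $A\oplus V$ by (\ref{4}) obeys the Zinbiel identity
\[
u\ast(v\ast w)=(v\ast u)\ast w+(u\ast v)\ast w,\qquad u,v,w\in A\oplus V,
\]
and to show that this single requirement splits exactly into the families (\ref{za1})--(\ref{za12}). Since $\ast$ is bilinear and $A\oplus V$ is spanned by the elements $(a,0)$ with $a\in A$ and $(0,x)$ with $x\in V$, it suffices to test the identity on triples $(u,v,w)$ each of whose entries is of \emph{pure type}, i.e.\ lies in $A\oplus 0$ or in $0\oplus V$; there are $2^{3}=8$ such triples. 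For each one I would substitute (\ref{4}) into both sides, expand (using repeatedly that $f$, $\ast_2$, $l_1$, $r_1$, $l_2$, $r_2$ all return $0$ when fed a zero argument), and then project the resulting equality onto the summand $A$ and onto the summand $V$ separately. Each of the seven nontrivial triples yields one relation on the $A$-side and one on the $V$-side, fourteen relations in all.

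The accounting I expect is as follows. The all-$A$ triple $\bigl((a,0),(b,0),(c,0)\bigr)$ collapses, since $(a,0)\ast(b,0)=(a\ast_1 b,0)$, to the Zinbiel identity for $(A,\ast_1)$, which holds by hypothesis, so it contributes nothing. The three triples with exactly one $V$-entry produce on their $V$-components precisely the relations defining a representation of the Zinbiel algebra $(A,\ast_1)$ on $(V,l_1,r_1)$, namely (\ref{za1}) (here one notes that $l_1(a)l_1(b)=l_1(b)l_1(a)$ is automatic because $l_1(a\ast_1 b)+l_1(b\ast_1 a)$ is symmetric in $a,b$, and that the two equalities in the second representation axiom are recovered by combining two of the three triples), while their $A$-components give (\ref{za5}), (\ref{za6}), (\ref{za7}). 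The three triples with exactly one $A$-entry give (\ref{za2}), (\ref{za3}), (\ref{za4}) on the $V$-side and (\ref{za8}), (\ref{za9}), (\ref{za10}) on the $A$-side, and the all-$V$ triple $\bigl((0,x),(0,y),(0,z)\bigr)$ gives (\ref{za12}) on the $V$-side and (\ref{za11}) on the $A$-side. Since all of these equivalences run in both directions, $(A\oplus V,\ast)$ is a Zinbiel algebra---equivalently, $A\natural V$ is a unified product---if and only if (\ref{za1})--(\ref{za12}) hold.

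The argument is conceptually straightforward; the real work is organizational. Because $\ast$ is not commutative, in the two right-hand terms $(v\ast u)\ast w$ and $(u\ast v)\ast w$ the inner products must be formed in the correct order---interchanging $u$ and $v$ genuinely changes which of $l_1,r_1$ (resp.\ $l_2,r_2$) appears and on which argument---so the bookkeeping must be done with care, and after relabeling of variables the extracted relations must be matched \emph{verbatim} against the listed ones, not merely up to equivalence. I expect the main obstacle to be exactly this: there is no clever shortcut, only a sizeable but mechanical case analysis. A useful internal check is that (\ref{za1}), when unpacked into its constituent identities, together with the eleven identities (\ref{za2})--(\ref{za12}), accounts for precisely the fourteen $A$- and $V$-component equations coming from the seven nontrivial pure-type triples, with nothing missing and nothing duplicated.
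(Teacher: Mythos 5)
Your proposal is correct and is exactly the standard argument for this result (the paper quotes it from \cite{5} without proof, but uses the identical triple-by-triple, component-by-component strategy for the pre-Poisson analogue in Proposition \ref{5}); your assignment of which pure-type triple produces which of \eqref{za1}--\eqref{za12}, including the observation that the identity $l_1(a)r_1(b)=r_1(a\ast_1 b)$ is recovered only by combining two of the triples, checks out. One byproduct of the verbatim matching you insist on: the $V$-component of the triple $((0,x),(0,y),(a,0))$ comes out as $x\ast_2(r_1(a)y)+r_1(l_2(y)a)x=r_1(a)(y\ast_2 x)+r_1(a)(x\ast_2 y)$, so the term $r_1(r_2(y)a)x$ printed in \eqref{za4} appears to be a misprint for $r_1(l_2(y)a)x$ --- a discrepancy your computation would surface.
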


Assume that $\Omega(A,V)=(l_{1},r_{1},l_{2},r_{2},f,\ast_{2})$ 
    is a Zinbiel extending datum of $A$ through $V$. If $f$ is trivial, in view of Theorem \ref{22}, we get that $\Omega(A,V)$
 is a Zinbiel extending structure of $A$ through $V$ if and only if $(V, \ast_2)$ is a Zinbiel algebra,
$(A,l_2,r_2) $ is a bimodule of $V$, $(V,l_{1},r_{1})$ is a bimodule of $A$
and they satisfy Eqs. (\ref{za2})-(\ref{za7}). In this case, the associated
unified product
$A\natural V$ is denoted by $A \bowtie V$ , which is called the bicrossed product of $A$ by $V$ and $(A,V,l_{1},r_{1},l_{2},r_{2})$ is
called the matched pair of Zinbiel algebras.

In the sequel, we recall the extending structures for pre-Lie algebras.
	%Let $(A,\circ_{1})$ be a pre-Lie algebra and $V$ be a vector space. A \textit{pre-Lie extending datum of $(A,\circ_{1})$ through $V$} 
 %is a system $\Omega(A, V) =(\triangleright,\triangleleft,\rightharpoonup,\leftharpoonup,\sigma,\circ_{2})$ consisting of six bilinear maps
	%\begin{align*}
	%&\triangleright: A \times V\to V,~\triangleleft: V \times A \to V, ~\rightharpoonup: V \times A \to A, \leftharpoonup : A \times V \to A,\\
	%&\sigma: V\times V \to A, ~\circ_{2}: V \times V \to V.
    %\end{align*}
    %Let $\Omega(A, V) =
    %(\triangleright,\triangleleft,\rightharpoonup,\leftharpoonup,\sigma,\circ_{2})$ be a pre-Lie extending datum. Denote by $A\natural V $ 
    %the vector space $ A\oplus V$ equipped with the bilinear map $\circ:(A\oplus V)\times(A \oplus V) \to A \oplus V$ given by
   % \begin{align}
    	%(a,x)\circ(b,y)=(a\circ_{1}b+a\leftharpoonup y+x \rightharpoonup b+\sigma(x,y),x\circ_{2}y+a\triangleright y+x\triangleleft b),\label{3}\end{align}
    %for all $a,b\in A$ and $x,y\in V$. The object $A\natural V $ is called the \textit{unified product} of $A$ and $V$ 
    %if $(A\oplus V,\circ)$ is a pre-Lie algebra. %
	
\begin{defi} \cite{6}
	Let $(A,\circ_{1})$ be a pre-Lie algebra and $V$ be a vector space. A \textbf{pre-Lie extending datum of $(A,\circ_{1})$ through $V$}  is a system $\Omega(A, V) =(\rho_1,\mu_1,\rho_2,\mu_2,g,\circ_{2})$ consisting of four linear maps 
\begin{align*}
	\rho_1,\mu_1: A \longrightarrow End(V), ~~~~\rho_2,\mu_2: V  \longrightarrow End(A) \end{align*}
and two bilinear maps
	\begin{align*}g: V\times V \longrightarrow A, ~\circ_{2}: V \times V \longrightarrow V.
    \end{align*}
     Denote by $A\natural V $ 
    the vector space $ A\oplus V$ equipped with the bilinear map $\circ:(A\oplus V)\times(A \oplus V) \longrightarrow A \oplus V$ 
    given by 
    \begin{align}
    	(a,x)\circ(b,y)=(a\circ_{1}b+\rho_2(x)b+\mu_2(y)a+g(x,y),x\circ_{2}y+\rho_1(a)y+\mu_1(b)x),\label{3}
    \end{align}
    for all $a,b\in A$ and $x,y\in V$. The object $A\natural V $ is called the \textit{unified product} of $(A,\circ_{1})$ and $V$ 
    if $(A\oplus V,\circ)$ is a pre-Lie algebra. In this case, the pre-Lie extending datum $\Omega(A, V)$
     is called a pre-Lie extending structure of $(A,\circ_{1})$ through $V$.
	\end{defi}

	\begin{thm} \cite{6}\label{21}
		Let $(A,\circ_{1})$ be a pre-Lie algebra, $V$ be a vector space and $\Omega(A, V) =(\rho_1,\mu_1,\rho_2,\mu_2,g,\circ_{2})$ be a \textit{pre-Lie extending datum of $(A,\circ_{1})$ through $V$}. Then $A\natural V$ is a \textit{unified product} of $(A,\circ_{1})$ and $V$ if and only if the following conditions hold for all $a,b\in A$ and $x,y,z\in V$,
		\begin{equation}\label{pra1}
		(V,\rho_{1},\mu_{1}) ~\hbox{is a bimodule of the pre-Lie algebra} ~(A,\circ_{1}),	\end{equation}
\begin{equation}\label{pra2} \rho_{1}(a)(x\circ_{2} y) =(\rho_{1}(a)x-\mu_1(a)x)\circ_{2} y+\rho_{1}(\mu_2(x)a-\rho_2(x)a)y+\mu_{1}(\mu_2(y)a)x+x\circ_{2}(\rho_1(a)y),\end{equation}
	\begin{equation}\label{pra3}\mu_{1}(a)(x\circ_{2}y-y\circ_{2}x)=\mu_{1}(\rho_{2}(y)a)x-\mu_{1}(\rho_{2}(x)a)y+x\circ_{2}(\mu_{1}(a)y)-y\circ_{2}(\mu_{1}(a)x),\end{equation}
		\begin{equation}\label{pra4} \rho_{2}(x)(a\circ_{1}b)=(\rho_2(x)a-\mu_2(x)a)\circ_{1}b+\rho_{2}(\mu_1(a)x-\rho_1(a)x)b+a\circ_{1}(\rho_2(x)b)+\mu_{2}(\mu_{1}(b)x)a,\end{equation}
		\begin{equation}\label{pra5}\mu_{2}(x)(a\circ_{1}b-b\circ_{1}a)=a\circ_{1}(\mu_2(x)b)+\mu_{2}(\rho_{1}(b)x)a-b\circ_{1}(\mu_2(x)a)-\mu_{2}(\rho_1(a)x)b,\end{equation}
		\begin{align}\label{pra6}\rho_{2}(x\circ_{2}y-y\circ_{2}x)a +(g(x, y)-g(y,x))\circ_{1}a
=&\rho_{2}(x)(\rho_2(y)a)-\rho_{2}(y)(\rho_2(x)a)\\
			&+g(x,\mu_1(a)y)-g(y,\mu_1(a)x),\nonumber\end{align}
		\begin{align}\label{pra7} 
			\mu_{2}(x\circ_{2} y)a+a\circ_{1} g(x, y)
			=&\rho_{2}(x)(\mu_{2}(y)a) +\mu_{2}(y)(\mu_2(x)a-\rho_2(x)a) \\
			&+g(\rho_{1}(a)x-\mu_1(a)x, y)+g(x, \rho_1(a)y),\nonumber\end{align}
		\begin{align}\label{pra8}g(x\circ_{2} y, z)-g(x, y\circ_{2} z)+\mu_{2}(z)g(x, y)-\rho_{2}(x)g(y, z)=&g(y\circ_{2}x, z)-g(y, x\circ_{2}z) \\
&+\mu_{2}(z)g(y, x)-\rho_{2}(y)g(x,z),\nonumber\end{align}
		\begin{align}\label{pra9}(x\circ_{2} y)\circ_{2} z-x\circ_{2}(y\circ_{2} z)+\rho_{1}(g(x, y))z-\mu_{1}(g(y, z))x
=&(y\circ_{2}x)\circ_{2} z-y\circ_{2}(x\circ_{2} z)\\
		&+\rho_{1}(g(y, x))z-\mu_{1}(g(x, z))y.\nonumber
		\end{align}
	\end{thm}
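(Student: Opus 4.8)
The plan is to unwind the defining axiom of a pre-Lie algebra for $(A\oplus V,\circ)$. Recall that a bilinear operation $\circ$ is pre-Lie precisely when its associator $\mathrm{as}(u,v,w):=(u\circ v)\circ w-u\circ(v\circ w)$ is symmetric in its first two arguments, i.e. $\mathrm{as}(u,v,w)=\mathrm{as}(v,u,w)$ for all $u,v,w$. Plugging in $u=(a,x)$, $v=(b,y)$, $w=(c,z)$ and expanding via \eqref{3}, this becomes a single identity in $A\oplus V$; by multilinearity it holds for all inputs if and only if it holds when each of $u,v,w$ lies entirely in $A$ or entirely in $V$. Grouping by the (unordered) types of the first two slots together with the type of the third, this leaves six genuinely distinct cases, and in each case the resulting vector identity in $A\oplus V$ splits into an $A$-component and a $V$-component, which are checked separately. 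This parallels verbatim the bookkeeping behind Theorem \ref{22}.

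I would then run through the six cases. When $u,v,w$ all lie in $A$, the $V$-component is vacuous and the $A$-component is exactly the pre-Lie identity for $(A,\circ_{1})$, which holds by hypothesis. When two slots lie in $A$ and one in $V$: the symmetry of $\mathrm{as}((a,0),(b,0),(0,z))$ in $a,b$ gives, in its $V$-component, the first bimodule axiom $\rho_{1}(a)\rho_{1}(b)-\rho_{1}(a\circ_{1}b)=\rho_{1}(b)\rho_{1}(a)-\rho_{1}(b\circ_{1}a)$ and, in its $A$-component, equation \eqref{pra5}; comparing $\mathrm{as}((a,0),(0,x),(c,0))$ with $\mathrm{as}((0,x),(a,0),(c,0))$ gives, in the $V$-component, the second bimodule axiom $\rho_{1}(a)\mu_{1}(c)-\mu_{1}(c)\rho_{1}(a)=\mu_{1}(a\circ_{1}c)-\mu_{1}(c)\mu_{1}(a)$ and, in the $A$-component, equation \eqref{pra4}. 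Thus the two-$A$-slot cases are together equivalent to \eqref{pra1}, \eqref{pra4} and \eqref{pra5}. When one slot lies in $A$ and two in $V$: comparing $\mathrm{as}((a,0),(0,y),(0,z))$ with $\mathrm{as}((0,y),(a,0),(0,z))$ yields \eqref{pra2} from the $V$-component and \eqref{pra7} from the $A$-component, while the symmetry of $\mathrm{as}((0,x),(0,y),(c,0))$ in $x,y$ yields \eqref{pra3} from the $V$-component and \eqref{pra6} from the $A$-component. Finally, when all three slots lie in $V$, the symmetry of $\mathrm{as}((0,x),(0,y),(0,z))$ in $x,y$ produces \eqref{pra8} from the $A$-component and \eqref{pra9} from the $V$-component. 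This settles the ``only if'' direction; for the converse, one observes that each instance of the associator-symmetry identity on $A\oplus V$ is recovered as a $k$-linear combination of these components, so \eqref{pra1}--\eqref{pra9} together force $(A\oplus V,\circ)$ to be pre-Lie.

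The main obstacle is not conceptual but organizational: the expansion of $\mathrm{as}((a,x),(b,y),(c,z))$ has many terms, and since $\circ$ is not commutative, when $u$ and $v$ are of different types the associator-symmetry links two distinct slot-patterns, so one must expand both, subtract, and sort the outcome into the $A$- and $V$-components without misplacing a term. The cases demanding the most care are those in which $g$ appears both inside a product and as an argument of another operation --- namely \eqref{pra6}, \eqref{pra7} and \eqref{pra8} --- since there the risk of dropping or duplicating a $g$-term is greatest; verifying that, for instance, $g(x,\mu_{1}(a)y)$ and $g(\rho_{1}(a)x-\mu_{1}(a)x,y)$ end up on the correct sides is the delicate point. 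Everything else is a routine computation analogous to the one carried out for Zinbiel extending structures.
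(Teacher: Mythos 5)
The paper records Theorem \ref{21} as a result quoted from \cite{6} and gives no proof of its own, but your argument --- expanding the symmetrized associator of the product \eqref{3} componentwise over the six type-patterns of the three arguments and using trilinearity for the converse --- is the standard one and is precisely the bookkeeping the paper itself carries out for the pre-Poisson analogue in Proposition \ref{5}. I verified each of your case-to-equation attributions (the $A$-$A$-$V$ and $A$-$V$-$A$ patterns giving the bimodule axioms together with \eqref{pra5} and \eqref{pra4}, the $A$-$V$-$V$ pattern giving \eqref{pra2} and \eqref{pra7}, the $V$-$V$-$A$ pattern giving \eqref{pra3} and \eqref{pra6}, and the all-$V$ pattern giving \eqref{pra8} and \eqref{pra9}), and they are all correct.
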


Let $\Omega(A,V)=(\rho_1,\mu_1,\rho_2,\mu_2,g,\circ_{2})$ 
    be a pre-Lie extending datum of $A$ through $V$. If $g$ is trivial, by Theorem \ref{21}, 
    we obtain that $\Omega(A,V)$
 is a pre-Lie extending structure of $A$ through $V$ if and only if $(V, \circ_2)$ is a pre-Lie algebra,
$(A,\rho_2,\mu_2) $ is a bimodule of $V$, $(V,\rho_{1},\mu_{1})$ is a bimodule of $A$
and they satisfy Eqs. (\ref{pra2})-(\ref{pra5}). In this case, the associated
unified product
$A\natural V$ is denoted by $A \bowtie V$, which is called the bicrossed product of $A$ by $V$ 
and $(A,V,\rho_{1},\mu_{1},\rho_{2},\mu_{2})$ is
called the matched pair of pre-Lie algebras. 

We are ready to study extending structures for pre-Poisson algebras.

\begin{defi}\label{6}
Let $(A,\ast_{1},\circ_{1})$ be a pre-Poisson algebra, $E$ a vector space such that $A$ is 
 a subspace of $E$ and $V$ is a complement of $A$ in $E$.
 For a linear map $\varphi:E\longrightarrow E$ we consider the following diagram:
 \begin{equation}\label{E} \xymatrix{
A\ar@{=}[d] \ar[r]^{i} & E\ar[d]_{\varphi} \ar[r]^{\pi} & V \ar@{=}[d] \\
A \ar[r]^{i} & E \ar[r]^{\pi} & V ,}\end{equation}
 where $\pi:E \longrightarrow V$ is the canonical projection of $E =A\oplus V$
  on $V$ and $i:A\longrightarrow E$ is
the inclusion map. We say that $\varphi:E\longrightarrow E$ stabilizes $A$ (resp. co-stabilizes $V$) if the
left square (resp. the right square) of the diagram (\ref{E}) is commutative.
  Let $(E,\ast,\circ)$ and $(E,\ast',\circ')$ be two pre-Poisson algebraic structures on 
  $E$ containing $(A,\ast_{1},\circ_{1})$ as a pre-Poisson subalgebra.
\begin{enumerate}
			\item $(E,\ast, \circ)$ and $(E,\ast', \circ')$ are called {\bf equivalent}, denoted 
  by $(E, \circ,\ast) \equiv (E, \ast',\circ')$, if there exists a pre-Poisson
   algebra isomorphism $\phi: (E, \ast,\circ)\longrightarrow (E,\ast', \circ')$ which stabilizes $A$, i.e.
   whose restriction on $A$ is the identity map.
 
\item $(E,\ast, \circ)$ and $(E, \ast',\circ')$ are called {\bf cohomologous},
 denoted by $(E,\ast, \circ) \approx (E,\ast', \circ')$, 
 if there exists a pre-Poisson algebra isomorphism $\phi: (E,\ast, \circ) \longrightarrow (E,\ast', \circ')$ 
    which stabilizes $A$ and
co-stabilizes $V$, namely, whose restriction on $A$ and $V$ are the identity maps.
		\end{enumerate}
	\end{defi}

It is easy to see that $\equiv$ (resp. $\approx$) is an equivalent relation on the set of 
all pre-Poisson structures on $E$ containing $(A,\ast,\circ)$ as a subalgebra. Denote
the set of all equivalent classes via $\equiv$ (resp. $\approx$) by $Extd(E, A)$ (resp. $Extd'(E,A)$). 
Thus, we only need to characterize $Extd(E, A)$~(resp. $Extd'(E,A))$ for investigating the
ES-problem.

\begin{defi}
  		Let $(A,\ast_{1},\circ_{1})$ be a pre-Poisson algebra and $V$ a vector space. 
  A \textbf{pre-Poisson extending datum of $(A,\ast_{1},\circ_{1})$ through $V$}
    is a system $\Omega(A,V)=(l_{1},r_{1},l_{2},r_{2},f,\ast_{2},\rho_1,\mu_1,\rho_2,\mu_2$,
    $g,\circ_{2})$ 
    consisting of eight linear maps 
  		\begin{align*}
  			l_{1},r_{1},\rho_{1},\mu_{1}: A \longrightarrow End(V), ~~ l_{2},r_{2},\rho_{2},\mu_{2} : V \longrightarrow End(A)
  			\end{align*}
  and four bilinear maps 
  \begin{align*} f,g: V\times V \longrightarrow A,  ~~\ast_{2},\circ_{2}: V \times V \longrightarrow V.
  		\end{align*}
  		Denote by $A\natural V $ the vector space $ A\oplus V$ 
  with the bilinear maps $\ast$ and $\circ$ : $(A\oplus V)\times(A \oplus V) \longrightarrow A 
  		\oplus V$ given by Eqs.~(\ref{4}) and (\ref{3}) respectively.
 The object $A\natural V $ is called the \textit{unified product} of $(A,\ast_{1},\circ_{1})$ and $V$ if 
  $(A\oplus V,\ast,\circ)$ is a pre-Poisson algebra. 
  In this case, the extending datum $\Omega(A, V) $ is called a pre-Poisson extending structure of $(A,\ast_{1},\circ_{1})$ through $V$.
  Denote by $\mathcal{A}(A, V)$ the set of all pre-Poisson extending structures of $(A,\ast_{1},\circ_{1})$ through $V$.
  	\end{defi}
  
  \begin{pro}\label{5} Let $(A,\ast_{1},\circ_{1})$ be a pre-Poisson algebra, $V$ be a vector space and $\Omega(A,V)=(l_{1},r_{1},l_{2},r_{2},f,\ast_{2},\rho_1,\mu_1,\rho_2,\mu_2,g,\circ_{2})$ be a \textit{pre-Poisson extending datum of $(A,\ast_{1},\circ_{1})$ through $V$}. Then $A\natural V$ is a unified product of $(A,\ast_{1},\circ_{1})$ and $V$ if and only if the following conditions hold for all $a,b\in A$ and $x,y,z\in V$,
  \begin{align} 
  &\text{$(l_{1},r_{1},l_{2},r_{2},f,\ast_{2})$ is a  \textit{Zinbiel extending structure of $(A,\ast_{1})$ through $V$},}\label{a40}\\
  &\text{$(\rho_1,\mu_1,\rho_2,\mu_2,g,\circ_{2})$ is a  \textit{pre-Lie extending structure of $(A,\circ_{1})$ through $V$},}\\   
  &l_{1}(a\circ_{1}b-b\circ_{1}a)x=\rho_{1}(a)(l_{1}(b)x)-l_{1}(b)(\rho_1(a)x),\label{a42}\\
  &r_{1}(b)(\rho_1(a)x-\mu_1(a)x)=\rho_{1}(a)(r_{1}(b)x)-r_{1}(a\circ_{1}b)x,\label{a43}\\ 
  &r_{1}(b)(\mu_1(a)x-\rho_1(a)x)=\mu_{1}(a\ast_{1}b)x-l_{1}(a)(\mu_1(b)x),\label{a44}\\
  &(\rho_1(a)x-\mu_1(a)x)\ast_{2}y+l_{1}(\mu_{2}(x)a-\rho_2(x)a)y=\rho_{1}(a)(x\ast_{2}y)-x\ast_{2}(\rho_1(a)y)\label{a45}\\
  &-r_{1}(\mu_{2}(y)a)x,\nonumber\\
  &(\mu_1(a)x-\rho_1(a)x)\ast_{2}y+l_{1}(\rho_2(x)a-\mu_{2}(x)a)y=x\circ_{2}(l_{1}(a)y)+\mu_{1}((r_{2}(y)a))x\label{a46}\\
  &-l_{1}(a)(x\circ_{2}y),\nonumber\\
  &r_{1}(a)(x\circ_{2}y-y\circ_{2}x)=x\circ_{2}(r_{1}(a)y)+\mu_{1}(l_{2}(y)a)x-y\ast_{2}(\mu_1(a)x)-r_{1}(\rho_2(x)a)y,\label{a47}\\
  &r_{2}(x)(a\circ_{1}b-b\circ_{1}a)=a\circ_{1}(r_{2}(x)b)+\mu_{2}(l_{1}(b)x)a-b\ast_{1}(\mu_{2}(x)a)-r_{2}(\rho_1(a)x)b,\label{a48}\\
  &(\mu_{2}(x)a-\rho_2(x)a)\ast_{1}b+l_{2}(\rho_1(a)x-\mu_1(a)x)b=a\circ_{1}(l_{2}(x)b)+\mu_{2}(r_{1}(b)x)a\label{a49}\\
  &-l_{2}(x)(a\circ_{1}b),\nonumber\\
  &(\rho_2(x)a-\mu_{2}(x)a)\ast_{1}b+l_{2}(\mu_1(a)x-\rho_1(a)x)b=\rho_{2}(x)(a\ast_{1}b)-a\ast_{1}(\rho_2(x)b)\label{a50}\\
  &-r_{2}(\mu_1(b)x)a,\nonumber\\
  &r_{2}(y)(\mu_{2}(x)a-\rho_2(x)a)+f(\rho_1(a)x-\mu_1(a)x,y)=a\circ_{1}f(x,y)+\mu_{2}(x\ast_{2}y)a\label{a51}\\
  \notag&-l_{2}(x)(\mu_{2}(y)a)-f(x,\rho_1(a)y),\\
  &r_{2}(y)(\rho_2(x)a-\mu_{2}(x)a)+f(\mu_1(a)x-\rho_1(a)x,y)=\rho_{2}(x)(r_{2}(y)a)+g(x,l_{1}(a)y)\label{a52}\\
  \notag&-a\ast_{1}(g(x,y))-r_{2}(x\circ_{2}y)a,\\
  &(g(x,y)-g(y,x))\ast_{1}a+l_{2}(x\circ_{2}y-y\circ_{2}x)b=\rho_{2}(x)(l_{2}(y)a)+g(x,r_{1}(a)y)\label{a53}\\
  \notag&-l_{2}(y)(\rho_2(x)a)-f(y,\mu_1(a)x),\\
  &r_{2}(z)(g(x,y)-g(y,x))+f(x\circ_{2}y-y\circ_{2}x,z)=\rho_{2}(x)f(x,y)+g(x,y\ast_{2}z)\label{a54}\\
  \notag&-l_{2}(y)g(x,z)-f(y,x\circ_{2}z),\\
  &(x\circ_{2}y-y\circ_{2}x)\ast_{2}z+l_{1}(g(x,y)-g(y,x))z=x\circ_{2}(y\ast_{2}z)-y\ast_{2}(x\circ_{2}z)\label{a55}\\
  \notag&+\mu_{1}(f(x,y))x-r_{1}(g(x,z))y,\\
  &\rho_{1}(a\ast_{1}b+b\ast_{1}a)x=l_{1}(a)(\rho_1(b)x)+l_{1}(b)(\rho_1(a)x),\label{a56}\\
  &\mu_{1}(b)(l_{1}(a)x+r_{1}(a)x)=l_{1}(a)(\mu_{1}(b)x)+r_{1}(a\circ_{1} b)x,\label{a57}\\
  &(l_{1}(a)x+r_{1}(a)x)\circ_{2}y+\rho_{1}(r_{2}(x)a+l_{2}(x)a)y=l_{1}(a)(x\circ_{2}y)+x\ast_{2}(\rho_1(a)y)+r_{1}(\mu_{2}(y)a)x,\label{a58}\\
  &\mu_{1}(a)(x\ast_{2}y+y\ast_{2}x)=x\ast_{2}(\mu_1(a)y)+y\ast_{2}(\mu_1(a)x)+r_{1}(\rho_2(y)a)x+r_{1}(\rho_2(x)a)y,\label{a59}\\
  &\mu_{2}(x)(a\ast_{1}b+b\ast_{1}a)=a\ast_{1}(\mu_2(x)b)+b\ast_{1}(\mu_2(x)a)+r_{2}(\rho_1(b)x)a+r_{2}(\rho_1(a)x)b,\label{a60}\\
  &(r_{2}(x)a+l_{2}(x)a)\circ_{1} b+\rho_{2}(l_{1}(a)x+r_{1}(a)x)b=a\ast_{1}(\rho_2(x)b)+r_{2}(\mu_1(b)x)a+l_{2}(x)(a\circ_{1}b),\label{a61}\\
  &\mu_{2}(y)(r_{2}(x)a+l_{2}(x)a)+g(l_{1}(a)x+r_{1}(a)x,y)=a\ast_{1}g(x,y)+r_{2}(x\circ_{2}y)a+l_{2}(x)(\mu_{2}(y)a)\label{a62}\\
  \notag&+f(x,\rho_1(a)y),\\
  &(f(x,y)+f(y,x))\circ_{1}a+\rho_{2}(x\ast_{2}y+y\ast_{2}x)a=l_{2}(x)(\rho_2(y)a)+l_{2}(y)(\rho_2(x)a)\label{a63}\\
  \notag&+f(x,\mu_1(a)y)+f(y,\mu_1(a)x),\\
  &\mu_{2}(z)(f(x,y)+f(y,z))+g(x\ast_{2}y+y\ast_{2}x,z)=l_{2}(x)g(y,z)+l_{2}(y)g(x,z)+f(x,y\circ_{2}z)\label{a64}\\
  \notag&+f(y,x\circ_{2} z),\\
  &(x\ast_{2} y+y\ast_{2}x)\circ_{2}z+\rho_{1}(f(x,y)+f(y,x)) z=x\ast_{2}(y\circ_{2}z)+y\ast_{2}(x\circ_{2}z)+r_{1}(g(y,z))x\label{a65}\\
  \notag&+r_{1}(g(x,z))y.
  \end{align}

  \end{pro}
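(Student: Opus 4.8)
The plan is to unwind the definition of a pre-Poisson structure on $A\oplus V$ directly. Recall that $(A\oplus V,\ast,\circ)$, with $\ast$ and $\circ$ given by (\ref{4}) and (\ref{3}), is a pre-Poisson algebra precisely when (i) $(A\oplus V,\ast)$ is a Zinbiel algebra, (ii) $(A\oplus V,\circ)$ is a pre-Lie algebra, and (iii) the two mixed axioms (\ref{ppa eq1.1}) and (\ref{ppa eq1.2}) hold on all of $A\oplus V$. I would first dispose of (i) and (ii) by invoking the results already recalled: by Theorem \ref{22}, (i) is equivalent to $(l_{1},r_{1},l_{2},r_{2},f,\ast_{2})$ being a Zinbiel extending structure of $(A,\ast_{1})$ through $V$, that is, to (\ref{a40}); by Theorem \ref{21}, (ii) is equivalent to $(\rho_{1},\mu_{1},\rho_{2},\mu_{2},g,\circ_{2})$ being a pre-Lie extending structure of $(A,\circ_{1})$ through $V$. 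Hence everything reduces to showing that, granted (i) and (ii), condition (iii) amounts exactly to the remaining equations (\ref{a42})--(\ref{a65}).

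For (iii) I would use that $\ast$ and $\circ$ are bilinear, so it suffices to verify (\ref{ppa eq1.1}) and (\ref{ppa eq1.2}) on a spanning set of $A\oplus V$, namely on the elements $(a,0)$ with $a\in A$ and $(0,x)$ with $x\in V$. Choosing each of the three arguments independently from these two families gives $2^{3}=8$ substitutions per axiom. In both axioms, the substitution with all three arguments in $A$ returns the pre-Poisson axioms of $(A,\ast_{1},\circ_{1})$ and so holds automatically. For (\ref{ppa eq1.1}) the seven remaining substitutions are pairwise distinct; for (\ref{ppa eq1.2}) I would observe that both sides are symmetric under interchanging the first two arguments, so substitutions obtained from one another by such an interchange yield the same identity, leaving five distinct nontrivial ones. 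Expanding each of these $7+5=12$ substitutions by means of (\ref{4}) and (\ref{3}) and reading off the $A$-component and the $V$-component gives $14+10=24$ identities, which I would then match one by one against (\ref{a42})--(\ref{a65}).

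As a representative computation, for (\ref{ppa eq1.1}) with arguments $(a,0),(b,0),(0,x)$ one has $(a,0)\circ(b,0)-(b,0)\circ(a,0)=(a\circ_{1}b-b\circ_{1}a,\,0)$, $(b,0)\ast(0,x)=(r_{2}(x)b,\,l_{1}(b)x)$ and $(a,0)\circ(0,x)=(\mu_{2}(x)a,\,\rho_{1}(a)x)$, so that (\ref{ppa eq1.1}) becomes, in its $V$-component, $l_{1}(a\circ_{1}b-b\circ_{1}a)x=\rho_{1}(a)(l_{1}(b)x)-l_{1}(b)(\rho_{1}(a)x)$, which is (\ref{a42}), and in its $A$-component, $r_{2}(x)(a\circ_{1}b-b\circ_{1}a)=a\circ_{1}(r_{2}(x)b)+\mu_{2}(l_{1}(b)x)a-b\ast_{1}(\mu_{2}(x)a)-r_{2}(\rho_{1}(a)x)b$, which is (\ref{a48}). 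I expect every other substitution to be handled by the same routine expansion, each landing on a pair of equations from the list. Conversely, assuming (\ref{a40}), the pre-Lie extending structure conditions, and (\ref{a42})--(\ref{a65}), the same case analysis gives (\ref{ppa eq1.1}) and (\ref{ppa eq1.2}) on all spanning elements, hence on all of $A\oplus V$, so $(A\oplus V,\ast,\circ)$ is a pre-Poisson algebra and $A\natural V$ is a unified product.

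The main obstacle will be purely the bulk of the bookkeeping: a dozen substitutions, each involving the two multi-term products (\ref{4}) and (\ref{3}), must be expanded carefully, and one must exploit the symmetry of (\ref{ppa eq1.2}) in order to arrive at exactly the twenty-four equations (\ref{a42})--(\ref{a65}) rather than a longer, partly redundant list. I do not anticipate any conceptual difficulty beyond this.
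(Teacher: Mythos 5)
Your proposal is correct and follows essentially the same route as the paper: invoke Theorem \ref{22} and Theorem \ref{21} for the Zinbiel and pre-Lie parts, then verify the two mixed pre-Poisson axioms on triples drawn from $(a,0)$ and $(0,x)$, splitting each into its $A$- and $V$-components to recover (\ref{a42})--(\ref{a65}) (your sample case correctly reproduces the paper's case (i), yielding (\ref{a42}) and (\ref{a48})). The only addition is your explicit symmetry argument explaining why the second axiom contributes just five distinct substitutions, which the paper leaves implicit.
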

  \begin{proof}
  	By Theorem \ref{22} and Theorem \ref{21}, $(A\oplus V,\ast)$ is a Zinbiel algebra 
  if and only if $(l_{1},r_{1},l_{2},r_{2},f,\ast_{2})$ is a 
  Zinbiel extending structure of $(A,\ast_{1})$ through $V$,
   and $(A\oplus V,\circ)$ is a pre-Lie algebra if 
   and only if $(\rho_1,\mu_1,\rho_2,\mu_2,g,\circ_{2})$ is
    a pre-Lie extending structure of $(A,\circ_{1})$ through $V$. So we only 
  need to check that
  	\begin{align}
  		&((a,x)\circ (b,y)-(b,y)\circ (a,x))\ast (c,z)=(a,x)\circ((b,y)\ast (c,z))-(b,y)\ast((a,x)\circ (c,z)),\label{60}\\
  		&((a,x)\ast(b,y)+(b,y)\ast(a,x))\circ(c,z)=(a,x)\ast((b,y)\circ(c,z))+(b,y)\ast((a,x)\circ(c,z)),\label{61}
  \end{align}
  	for all $a,b,c\in A,~x,y,z\in V$ if and only if \eqref{a42}-\eqref{a65} hold.
In fact, for the Eq. \eqref{60}, we consider it by the following cases:

(i) \eqref{60} holds for the triple $((a,0),(b,0),(0,x))$ if and only if \eqref{a42} and \eqref{a48} hold.

(ii) \eqref{60} holds for the triple $((a,0),(0,x),(b,0))$ if and only if \eqref{a43} and \eqref{a49} hold.

(iii) \eqref{60} holds for the triple $((0,x),(a,0),(b,0))$ if and only if \eqref{a44} and \eqref{a50} hold.

(iv) \eqref{60} holds for the triple $((a,0),(0,x),(0,y))$ if and only if \eqref{a45} and \eqref{a51} hold.

(v) \eqref{60} holds for the triple $((0,x),(a,0),(0,y))$ if and only if \eqref{a46} and \eqref{a52} hold.

(vi) \eqref{60} holds for the triple $((0,x),(0,y),(a,0))$ if and only if \eqref{a47} and \eqref{a53} hold.

(vii)\eqref{60} holds for the triple $((0,x),(0,y),(0,z))$ if and only if \eqref{a54} and \eqref{a55} hold.
  
For the Eq.\eqref{61}, we consider it by the following cases: 

(i) \eqref{61} holds for the triple $((a,0),(b,0),(0,x))$ if and only if \eqref{a56} and \eqref{a60} hold.

(ii) \eqref{61} holds for the triple $((a,0),(0,x),(b,0))$ if and only if \eqref{a57} and \eqref{a61} hold.

(iii) \eqref{61} holds for the triple $((a,0),(0,x),(0,y))$ if and only if \eqref{a58} and \eqref{a62} hold.

(iv) \eqref{61} holds for the triple $((0,x),(0,y),(a,0))$ if and only if \eqref{a59} and \eqref{a63} hold.

(v) \eqref{61} holds for the triple $((0,x),(0,y),(0,z))$ if and only if \eqref{a64} and \eqref{a65} hold. 
 
 By direct computations, we can prove that the above statements hold.
  \end{proof}
  \begin{thm}\label{11}
  	Let $(A,\ast_{1},\circ_{1})$ be a pre-Poisson algebra, $E$ a vector space containing $A$ 
  as a subspace and $(E,\ast,\circ)$ a pre-Poisson algebra such that $A$ 
  is a pre-Poisson subalgebra of $E$. Then there is a pre-Poisson \textit{extending structure} $\Omega(A,V)=(l_{1},r_{1},l_{2},r_{2},f,\ast_{2},\rho_1,\mu_1,\rho_2,\mu_2,g,\circ_{2})$ 
  of $A$ through a subspace $V$ of $E$ and an isomorphism of pre-Poisson algebras 
  $(E,\ast,\circ) \cong A\natural V$ 
  that
stabilizes $A$ and co-stabilizes $V$.
  \end{thm}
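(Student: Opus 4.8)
The plan is to mimic the standard extending-structures argument (as carried out for Zinbiel algebras in \cite{5}, for pre-Lie algebras in \cite{6}, and for Poisson algebras in \cite{04}), now assembling the Zinbiel and pre-Lie pieces simultaneously. First I would fix a linear complement $V$ of $A$ in $E$, so that $E = A \oplus V$ as vector spaces, and let $p : E \to A$ and $\pi : E \to V$ be the associated projections. The idea is to transport the given operations $\ast, \circ$ on $E$ to operations on $A \oplus V$ via the identity map, and then read off the component maps of a pre-Poisson extending datum from the four "block" pieces of each product.

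Concretely, for $a \in A$ and $x \in V$ I would define $l_1(a)x = \pi(a \ast x)$, $r_1(a)x = \pi(x \ast a)$, $l_2(x)a = p(x \ast a)$, $r_2(x)a = p(a \ast x)$, $f(x,y) = p(x \ast y)$, $x \ast_2 y = \pi(x \ast y)$, and analogously $\rho_1(a)x = \pi(a \circ x)$, $\mu_1(a)x = \pi(x \circ a)$, $\rho_2(x)a = p(x \circ a)$, $\mu_2(x)a = p(a \circ x)$, $g(x,y) = p(x \circ y)$, $x \circ_2 y = \pi(x \circ y)$; here I use that $A$ is a subalgebra, so $a \ast b \in A$ and $a \circ b \in A$ for $a, b \in A$, which is what makes the $A$-component of $(a,0) \ast (b,0)$ equal to $a \ast_1 b$ and kills the $V$-component. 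With these definitions one checks directly that, for $a,b \in A$ and $x, y \in V$, the products $\ast$ and $\circ$ on $E = A \oplus V$ are exactly given by formulas \eqref{4} and \eqref{3}, i.e. the identity map $E \to A \natural V$ is an algebra homomorphism for both operations. Since $(E, \ast, \circ)$ is by hypothesis a pre-Poisson algebra, $A \natural V$ with these structure maps is a pre-Poisson algebra too; hence by Proposition~\ref{5} the system $\Omega(A,V)$ just constructed satisfies \eqref{a40}--\eqref{a65} and is therefore a genuine pre-Poisson extending structure, and $A \natural V$ is a unified product of $(A, \ast_1, \circ_1)$ and $V$.

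Finally, the map $\varphi : E \to A \natural V$, $\varphi(e) = (p(e), \pi(e))$, is by construction a linear isomorphism; it is the identity on $A$ (since $p|_A = \mathrm{id}$, $\pi|_A = 0$) and it maps $V$ identically to the $V$-summand, so it stabilizes $A$ and co-stabilizes $V$. Because the operations on $A \natural V$ were defined precisely so that $\varphi$ is multiplicative for $\ast$ and $\circ$, it is an isomorphism of pre-Poisson algebras, which completes the proof. I do not expect any real obstacle here: the only point requiring care is the bookkeeping that each of the twenty-some compatibility identities in Proposition~\ref{5} corresponds to a component of one of the pre-Poisson axioms \eqref{ppa eq1.1}--\eqref{ppa eq1.2} on $E$ evaluated on pure tensors from $A$ and $V$; but that verification is already subsumed in Proposition~\ref{5} itself, so here it suffices to invoke it. The mild subtlety worth a sentence is independence of the choice of complement $V$, which is not needed for this existence statement but underlies the classification results that follow.
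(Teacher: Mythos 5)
Your proposal is correct and follows essentially the same route as the paper: both fix a complement $V=\ker(p)$, define the twelve structure maps as the $A$- and $V$-components of the products of mixed elements (your $\pi(\cdot)$ is exactly the paper's $\mathrm{id}-p$), and then invoke Proposition~\ref{5} together with the fact that the block decomposition reproduces formulas \eqref{4} and \eqref{3} to conclude that $\Omega(A,V)$ is an extending structure and that the identification $E\cong A\natural V$ stabilizes $A$ and co-stabilizes $V$. The only difference is notational (you write the isomorphism as $e\mapsto(p(e),\pi(e))$ where the paper writes its inverse $(a,x)\mapsto a+x$).
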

  \begin{proof}Clearly,
  there is a linear map $p : E \longrightarrow A$ such that $p(a)=a$, for all $a \in A$. 
  Then $V=\mathrm{ker}(p)$ is a subspace of $E$, which is a complement of $A$ in $E$. Define the linear maps 
  \begin{align*}
 	l_{1},r_{1},\rho_{1},\mu_{1}: A \longrightarrow End(V),~~l_{2},r_{2},\rho_{2},\mu_{2}: V  \longrightarrow End(A)
 \end{align*}
 and the bilinear maps
 \begin{align*}f,g: V\times V \longrightarrow A,~\ast_{2},\circ_{2}: V\times V \longrightarrow V \end{align*}
 respectively by
  \begin{align*}
 	&l_{1}(a)x=a\ast x-p(a\ast x),~r_{1}(a)x=x\ast a-p(x\ast a),~\rho_{1}(a)x=a\circ x-p(a\circ x),\\
 	&\mu_{1}(a)x=x\circ a-p(x\circ a),~l_{2}(x)a=p(x\ast a),~r_{2}(a)x=p(a\ast x),~ \rho_{2}(x)b=p(x\circ a),\\
 &\mu_{2}(x)a=p(a\circ x),~ f(x,y)=p(x\ast y),~g(x,y)=p(x\circ y),\\
 &x\ast_{2} y=x\ast y-p(x\ast y),~x\circ_{2} y=x\circ y-p(x\circ y),
 \end{align*}
 for all $a\in A$,~$x,y\in V$. Based on Theorem 3.10 \cite{6} and Theorem 1.8 \cite{5}, 
 it is easy to prove that
  $\Omega(A,V)=(l_{1},r_{1},l_{2},r_{2},f,\ast_{2},\rho_1,\mu_1,\rho_2,\mu_2,g,\circ_{2})$ 
 is a pre-Poisson extending structure of $(A,\ast_{1},\circ_{1})$ 
 through $V$ and the linear map 
 $\varphi:A\natural V\longrightarrow (E,\ast,\circ)$ with $\varphi(a,x)=a+x$
 is an isomorphism of pre-Poisson algebras whose restrictions on $A$ 
 and $V$ are the identity maps.
  	 \end{proof}
  	 
 By Theorem \ref{11}, any pre-Poisson algebra structure on a vector space $E$ containing $A$ 
 as a pre-Poisson subalgebra is isomorphic to a unified product of $A$ 
 through a given complement $V$ of $A$ in $E$. Hence the classification 
 of all pre-Poisson algebra structures on $E$ that contains $A$ as a 
 pre-Poisson subalgebra is equivalent to the classification of all 
 unified products $A\natural V$, associated to all pre-Poisson extending structures $\Omega(A,V)=(l_{1},r_{1},l_{2},r_{2},f,\ast_{2},\rho_1,\mu_1,\rho_2,\mu_2,g,\circ_{2})$,
  for a given complement $V$ of $A$ in $E$. 
  	
  	In the sequel, we will classify all unified products $A\natural V$.
  	 \begin{lem}\label{63}
  	 	Let $(A,\ast_{1},\circ_{1})$ be a pre-Poisson algebra and $V$ a vector space. Assume that $\Omega(A,V)=(l_{1},r_{1},l_{2},r_{2},f,\ast_{2},\rho_1,\mu_1,\rho_2,\mu_2,g,\circ_{2})$, $\Omega'(A,V)=(l'_{1},r'_{1},l'_{2},r'_{2},f',\ast'_{2},\rho'_1,\mu'_1,\rho'_2,\mu'_2,g'$,
  	 	$\circ'_{2})$
   are two pre-Poisson extending structures of $(A,\ast_{1},\circ_{1})$ through $V$, 
   and $A\natural V,~A \natural' V$ are their associated unified products respectively.
    Then there exists a bijection between the set of all homomorphisms of 
    pre-Poisson algebras $\psi: A\natural V\longrightarrow A \natural' V$ whose restriction on $A$ is 
    the identity map and the set of pairs $(\zeta,\eta)$, where $\zeta: V \longrightarrow A$,
     $\eta: V \longrightarrow V$ are linear maps and they satisfy the following conditions,
  	 	\begin{align}
  	 	&\eta(l_{1}(a)x)=l'_{1}(a)\eta(x),~~\eta(r_{1}(a)x)=r'_{1}(a)\eta(x),\label{90}\\
  	 	&\zeta(l_{1}(a)x)=a\ast_{1} \zeta(x)-r_{2}(x)a+r'_{2}(\eta(x))a,\label{91}\\
  	 	&\zeta(r_{1}(a)x)=\zeta(x)\ast_{1} a-l_{2}(x)a+l'_{2}(\eta(x))a,\label{92}\\
  	 	&\eta(x\ast_{2} y)=\eta(x)\ast'_{2}\eta(y)+l'_{1}(\zeta(x))\eta(y)+r'_{1}(\zeta(y))\eta(x),\label{93}\\
  	 	&\zeta(x\ast_{2}y)=\zeta(x)\ast_{1}\zeta(y)+l'_{2}(\eta(x))\zeta(y)+r'_{2}(\eta(y))\zeta(x)+f'(\eta(x),\eta(y))-f(x,y),\label{94}\\
  	 	&\eta(\rho_{1}(a)x)=\rho'_{1}(a)\eta(x),~~\eta(\mu_{1}(a)x)=\mu'_{1}(a)\eta(x),\label{95}\\
  	 	&\zeta(\rho_{1}(a)x)=a\circ_{1}\zeta(x)-\mu_{2}(x)a+\mu'_{2}(\eta(x))a,\label{96}\\
  	 	&\zeta(\mu_{1}(a)x)=\zeta(x)\circ_{1}a-\rho_{2}(x)a+\rho'_{2}(\eta(x))a,\label{97}\\
  	 	&\eta(x\circ_{2} y)=\eta(x)\circ'_{2}\eta(y)+\rho'_{1}(\zeta(x))\eta(y)+\mu'_{1}(\zeta(y))\eta(x),\label{98}\\
  	 	&\zeta(x\circ_{2}y)=\zeta(x)\circ_{1}\zeta(y)+\rho'_{2}(\eta(x))\zeta(y)+\mu'_{2}(\eta(y))\zeta(x)+g'(\eta(x),\eta(y))-g(x,y).\label{99}
  	 	\end{align}
  	 	 for any $a \in A$, $x, y \in V$.
 For a pair $(\zeta,\eta)$, the corresponding homomorphism of
pre-Poisson algebras $\psi=\psi_{(\zeta,\eta)}:A\natural V\longrightarrow A \natural' V$ is given by:
$\psi_{(\zeta,\eta)}(a,x)=(a+\zeta(x), \eta(x))$.
The homomorphism $\psi=\psi_{(\zeta,\eta)}$ is an isomorphism if and only if $\eta$ is a bijection
 and $\psi=\psi_{(\zeta,\eta)}$ co-stabilizes $V$ if and only if $\eta=id_V$. 
\begin{proof}
Due to $\psi:A\natural V \longrightarrow A \natural' V$ being an isomorphism of pre-Poisson algebras, 
  whose restriction on 
  $A$ is the identity map. Put $\psi(a,x)=(a+\zeta(x), \eta(x))$, for all $a\in A$ and $x \in V$, 
  where $\zeta: V \longrightarrow A$ and $\eta: V \longrightarrow V$ are two linear maps.
   On the basis of Lemma 5.6 \cite{4} and Lemma 1.9 \cite{5}, we can get the conclusion.
  	 	\end{proof}
  	 \end{lem}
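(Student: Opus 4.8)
The plan is to adapt to the pre-Poisson setting the standard description of morphisms between unified products already established for Zinbiel and pre-Lie algebras in \cite{4,5,6}. The starting point is purely linear-algebraic: both $A\natural V$ and $A\natural' V$ have $A\oplus V$ as underlying vector space, and a linear map $\psi\colon A\oplus V\to A\oplus V$ with $\psi|_{A}=\mathrm{id}_{A}$ is determined by its restriction to $V$; writing $\psi(0,x)=(\zeta(x),\eta(x))$ defines linear maps $\zeta\colon V\to A$ and $\eta\colon V\to V$ and forces $\psi(a,x)=(a+\zeta(x),\eta(x))$, while conversely any pair $(\zeta,\eta)$ yields such a $\psi=\psi_{(\zeta,\eta)}$. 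This already furnishes a bijection between linear maps $A\natural V\to A\natural' V$ restricting to the identity on $A$ and pairs $(\zeta,\eta)$ of linear maps, and it remains to decide which $\psi_{(\zeta,\eta)}$ are morphisms of pre-Poisson algebras and to translate that condition into Eqs. \eqref{90}-\eqref{99}.

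For this, I would use that $\psi_{(\zeta,\eta)}$ is a morphism of pre-Poisson algebras if and only if it is at the same time a morphism of the Zinbiel algebras $(A\natural V,\ast)\to(A\natural' V,\ast')$ and of the pre-Lie algebras $(A\natural V,\circ)\to(A\natural' V,\circ')$; the mixed axioms \eqref{ppa eq1.1}-\eqref{ppa eq1.2} are restrictions on the algebra structures, not on maps, so they impose nothing extra. To read off Eqs. \eqref{90}-\eqref{94} I would expand $\psi_{(\zeta,\eta)}((a,x)\ast(b,y))=\psi_{(\zeta,\eta)}(a,x)\ast'\psi_{(\zeta,\eta)}(b,y)$ using \eqref{4} and, by bilinearity, test it on the pairs $((a,0),(b,0))$, $((a,0),(0,y))$, $((0,x),(b,0))$ and $((0,x),(0,y))$: the first case is automatic since $A$ is a subalgebra of both unified products, while the remaining three give \eqref{91} and half of \eqref{90}, \eqref{92} and the other half of \eqref{90}, and \eqref{93} together with \eqref{94}, respectively; alternatively one may simply invoke the Zinbiel morphism lemma (Lemma 1.9 of \cite{5}). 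Running the identical computation for $\circ$ via \eqref{3}, or citing the pre-Lie morphism lemma (Lemma 5.6 of \cite{4}), produces Eqs. \eqref{95}-\eqref{99}. Combining the two halves shows that $\psi_{(\zeta,\eta)}$ is a morphism of pre-Poisson algebras exactly when \eqref{90}-\eqref{99} all hold, which is the asserted bijection.

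The two closing statements then follow from the block-triangular shape of $\psi_{(\zeta,\eta)}$ with respect to $A\oplus V$, namely the one with diagonal parts $\mathrm{id}_{A}$ and $\eta$ and off-diagonal part $\zeta$: such a map is a linear bijection if and only if $\eta$ is, and when $\eta$ is invertible the inverse again fixes $A$ pointwise and is a morphism of pre-Poisson algebras, so $\psi_{(\zeta,\eta)}$ is an isomorphism precisely in that case. Likewise, $\psi_{(\zeta,\eta)}$ co-stabilizes $V$ means $\pi\circ\psi_{(\zeta,\eta)}=\pi$ for the canonical projection $\pi\colon A\oplus V\to V$, and comparing $V$-components gives $\eta(x)=x$ for all $x\in V$, i.e. $\eta=\mathrm{id}_{V}$. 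The only genuinely laborious point is the case analysis that matches the morphism conditions with \eqref{90}-\eqref{99}; it runs entirely parallel to the bookkeeping already done in Proposition \ref{5}, the one thing to be careful about being the bilinearity conventions for $f,g,l_{2},r_{2},\rho_{2},\mu_{2}$ when evaluating on elements having a vanishing component.
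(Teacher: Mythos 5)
Your proposal is correct and takes essentially the same route as the paper: write $\psi(a,x)=(a+\zeta(x),\eta(x))$, observe that a pre-Poisson morphism is exactly a simultaneous Zinbiel and pre-Lie morphism (the compatibility axioms constrain the algebras, not the maps), and reduce to the corresponding morphism lemmas for Zinbiel and left-symmetric extending structures, which yield \eqref{90}--\eqref{94} and \eqref{95}--\eqref{99} respectively. Your version actually supplies more detail than the paper's two-line citation, including the case analysis and the block-triangular argument for the isomorphism and co-stabilization claims.
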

  
\begin{defi}\label{01}
  	 Let $A$ be a pre-Poisson algebra and $V$ a vector space. Assume that
   $\Omega(A,V)=(l_{1},r_{1},l_{2},r_{2},f,\ast_{2},\rho_1,\mu_1,\rho_2,\mu_2,g,\circ_{2})$ and  $\Omega'(A,V)=(l'_{1},r'_{1},l'_{2},r'_{2},f',\ast'_{2},\rho'_1,\mu'_1,\rho'_2,\mu'_2,g',\circ'_{2})$ 
 are two pre-Poisson extending structures of $A$ through $V$. 
  If there is a pair $(\zeta,\eta)$ of linear maps, 
 where  $\eta:V\longrightarrow V$ is a bijection and $\zeta:V\longrightarrow A$, satisfying 
  Eqs. \eqref{90}-\eqref{99}, then
  $\Omega(A,V)$ and $\Omega'(A,V)$ are called {\bf equivalent} and denote it by 
  $\Omega(A, V) \equiv \Omega'(A, V)$.
  Moreover, if $\eta=id_{V}$, $\Omega(A, V)$ and $ \Omega'(A, V)$ are called {\bf cohomologous}, which is 
denoted by $\Omega(A, V) \approx \Omega'(A, V)$.
  	\end{defi}
  
Now, we are ready to provide an answer for the ES-problem of pre-Poisson algebra by the following Theorem.
  
  	\begin{thm}\label{13}
  		Let $A$ be a pre-Poisson algebra, $E$ a vector space containing $A$ as a subspace and $V$ a complement of $A$ in $E$. 
  		\begin{enumerate}
  			\item Denote  $H^2_{p}(V, A)= \mathcal{A}(A, V)/ \equiv$. The map
  		\begin{align*}
  			\Phi:H^2_{p}(V, A) \longrightarrow Extd(E,A), \quad \overline{\Omega(A, V)} \longmapsto \overline{A \natural  V}
  		\end{align*}
  		is a bijection, where $\overline{\Omega(A, V)}$ and $\overline{A \natural  V}$ 
  are the equivalent class of $\Omega(A, V)$ and $A \natural  V$ under $\equiv$ respectively. 
  	\item Denote $H^2(V, A)=\mathcal{A}(A, V) / \approx$. The map
  		\begin{align*} 
  			\Psi:H^2(V,A) \longrightarrow Extd'(E, A), \quad [\Omega(A, V)] \longmapsto [A \natural  V]
  		\end{align*}
  		is a bijection, where $[\Omega(A, V)]$ and $[A \natural  V]$ 
  are the cohomologous class of $\Omega(A, V)$ and $A \natural  V$ under $\approx$ respectively.

  \end{enumerate}
  	\end{thm}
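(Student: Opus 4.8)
The plan is to reduce the statement to the three ingredients already in hand: Theorem \ref{11}, which says every pre-Poisson structure on $E$ containing $A$ as a subalgebra is (via an isomorphism stabilizing $A$ and co-stabilizing $V$) a unified product $A\natural V$; Proposition \ref{5}, which characterizes the valid extending structures; and Lemma \ref{63} together with Definition \ref{01}, which describe exactly when two unified products are related by an isomorphism stabilizing $A$ (hence by $\equiv$) or additionally co-stabilizing $V$ (hence by $\approx$). The two parts are formally parallel, so I would prove (a) in detail and indicate the (easier) modifications for (b).

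For part (a), I would first check that $\Phi$ is well defined: if $\Omega(A,V)\equiv\Omega'(A,V)$, then by Definition \ref{01} there is a pair $(\zeta,\eta)$ with $\eta$ bijective satisfying Eqs. \eqref{90}--\eqref{99}, and by Lemma \ref{63} this gives an isomorphism $\psi_{(\zeta,\eta)}:A\natural V\to A\natural' V$ of pre-Poisson algebras whose restriction to $A$ is the identity, i.e. which stabilizes $A$; hence $\overline{A\natural V}=\overline{A\natural' V}$ in $Extd(E,A)$. Next, surjectivity of $\Phi$ is immediate from Theorem \ref{11}: given any pre-Poisson structure $(E,\ast,\circ)$ containing $A$ as a subalgebra, that theorem produces an extending structure $\Omega(A,V)$ and an isomorphism $(E,\ast,\circ)\cong A\natural V$ stabilizing $A$, so the class of $(E,\ast,\circ)$ in $Extd(E,A)$ is $\Phi(\overline{\Omega(A,V)})$. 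Finally, for injectivity: suppose $\Phi(\overline{\Omega(A,V)})=\Phi(\overline{\Omega'(A,V)})$, i.e. $A\natural V\equiv A\natural' V$ as pre-Poisson structures on $E$; unwinding the identification $A\natural V = A\oplus V = E$ as vector spaces, there is a pre-Poisson isomorphism $\psi:A\natural V\to A\natural' V$ stabilizing $A$. By Lemma \ref{63}, $\psi=\psi_{(\zeta,\eta)}$ for a unique pair $(\zeta,\eta)$ satisfying \eqref{90}--\eqref{99}, and since $\psi$ is an isomorphism, $\eta$ is bijective; hence $\Omega(A,V)\equiv\Omega'(A,V)$ by Definition \ref{01}, so $\overline{\Omega(A,V)}=\overline{\Omega'(A,V)}$ in $H^2_p(V,A)$.

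For part (b), the argument is identical but one restricts to pairs with $\eta=\mathrm{id}_V$: by the last sentence of Lemma \ref{63}, $\psi_{(\zeta,\eta)}$ co-stabilizes $V$ iff $\eta=\mathrm{id}_V$, which is precisely the relation $\approx$ of Definition \ref{01}; so $\Psi$ is well defined, and surjectivity again follows from Theorem \ref{11} (the isomorphism produced there co-stabilizes $V$), while injectivity follows because an isomorphism $A\natural V\to A\natural' V$ stabilizing $A$ \emph{and} co-stabilizing $V$ corresponds under Lemma \ref{63} to a pair $(\zeta,\mathrm{id}_V)$ satisfying \eqref{90}--\eqref{99}, i.e. $\Omega(A,V)\approx\Omega'(A,V)$.

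The content of this theorem is almost entirely bookkeeping: the real work has already been done in Theorem \ref{11}, Proposition \ref{5}, and Lemma \ref{63}. The only point requiring a little care — and the place I would be most careful about — is the transport of equivalence relations between $E$ and the abstract unified products: one must make sure that "$(E,\ast,\circ)\equiv(E,\ast',\circ')$" translates, via the fixed complement $V$ and the identifications of Theorem \ref{11}, into "$A\natural V\equiv A\natural' V$" with the \emph{same} underlying vector space $A\oplus V$, so that Lemma \ref{63} applies verbatim; this is exactly the kind of identification that is routine but must be stated cleanly. Once that is in place, injectivity and surjectivity of $\Phi$ and $\Psi$ drop out immediately, and the proof is complete.
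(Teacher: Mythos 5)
Your proposal is correct and follows exactly the route the paper takes: its proof of this theorem is a one-line appeal to Proposition \ref{5}, Theorem \ref{11} and Lemma \ref{63}, and your write-up simply supplies the routine well-definedness, surjectivity and injectivity checks that that citation compresses. No substantive differences to report.
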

  	\begin{proof}
The statements can be verified by Proposition \ref{5}, Theorem \ref{11} and  Lemma \ref{63} .
  	 	\end{proof}
  
  %%%%%%%%%%%%%%%%%%%%%%%%%%%%%%%%%%%%%%%%%%%%%%%%%%%%%%%%%%%%%%%%%%%%%%%%%%%%%%%%%%%%%%%%%%%%%%%%%%%%%%%%%%
\section{Special cases of unified products}

  In the section, we first introduce the notions of 
  crossed products and bicrossed products of pre-Poisson algebras, which
   are special cases of the unified products of pre-Poisson algebras.
  
  %In the remaining part, if one of the maps of a pre-Poisson extending datum $\Omega(A, V)=(l_{2},r_{2},f,\ast_{2},\rho_{2},\mu_{2},g,\circ_{2})$ is the trivial one,  we will omit it from $\Omega(A, V)$.
  
\subsection{Crossed products and the extension problem}\label{12}

Let $(A,\ast_1,\circ_1)$ be a pre-Poisson algebra and $V$ a vector space. Suppose that
$\Omega(A,V)=(l_{1},r_{1},l_{2},r_{2},f,\ast_{2},\rho_1,\mu_1,\rho_2,\mu_2,g,\circ_{2})$ 
is a pre-Poisson extending datum of $(A,\ast_1,\circ_1)$ through $V$ 
such that $l_{1},r_{1},\rho_1,\mu_1$ are trivial maps, that is,
 $l_{1}(a)x=r_{1}(a)x=\rho_{1}(a)x=\mu_{1}(a)x=0$ for all $x \in V$ and $a \in A$.
 Then, $\Omega(A,V)=(l_{2},r_{2},f,\ast_{2},\rho_2,\mu_2,g,\circ_{2})$ is a pre-Poisson 
 extending structure of $A$ through $V$ if and only if $(V, \ast_{2}, \circ_{2})$ is a pre-Poisson algebra and they satisfy Eqs. \eqref{za5}-\eqref{za11}, \eqref{pra4}-\eqref{pra8}, \eqref{a48}-\eqref{a54} and \eqref{a60}-\eqref{a64}, where the pre-Poisson structure $(\ast,\circ)$ on $A\natural V$ is given by
 \begin{align}
	&(a,x)\ast(b,y)=(a\ast_{1}b+l_{2}(x)b+r_{2}(y)a+f(x,y),x\ast_{2}y),\label{sm1}\\
	&(a,x)\circ(b,y)=(a\circ_{1}b+\rho_{2}(x)b+\mu_{2}(y)a+g(x,y),x\circ_{2}y),~~\forall~~a,b \in A,x,y \in V.\label{sm2}
\end{align}
 In this case, $\Omega(A,V)=(l_{2},r_{2},f,\ast_{2},\rho_2,\mu_2,g,\circ_{2})$ is called
a {\bf crossed system of pre-Poisson algebra} of $(A,\ast_1,\circ_1)$ through $V$, denoted by $CS(A,V)$,
and the associated unified product $A\natural V$ is called the {\bf crossed product} 
of the pre-Poisson algebras $A$ and $V$, which is denoted by $A\sharp V$.
 Denote by $GCS(A,V)$ the set of all crossed systems of the pre-Poisson algebra $A$ through $V$.

Assume that $(l_{2},r_{2},f,\rho_{2},\mu_{2},g)$ is a crossed system 
of the pre-Poisson algebra $A$ through $V$. 
Then, $A \cong A \times \{ 0\}$ is an ideal in the pre-Poisson algebra 
 $A\sharp V$ since $(a,0)\ast(b,y)=(a\ast_{1}b+r_{2}(y)a,0)$ and $(a,0)\circ(b,y)=(a\circ_{1}b+\mu_2 (y)a,0)$.
 On the other hand, crossed products describe all pre-Poisson algebra structures on a vector space $E$ 
 such that a given $A$ is an ideal of $E$.

\begin{pro}\label{62}
Suppose that $A$ is a pre-Poisson algebra and $E$ is a vector space containing $A$ as a subspace. 
Then any pre-Poisson algebra structure on $E$ that contains $A$ as a
 pre-Poisson ideal is isomorphic to a crossed product $A\sharp V$. Moreover,
 the isomorphism of pre-Poisson algebras 
  $(E,\ast,\circ) \cong A\sharp V$ can be chosen such
  that it stabilizes $A$ and co-stabilizes $V$.
\end{pro}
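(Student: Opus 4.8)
The plan is to mimic the proof of Theorem~\ref{11}, but now exploiting the stronger hypothesis that $A$ is an ideal rather than merely a subalgebra, so that the maps $l_1,r_1,\rho_1,\mu_1$ produced by the construction automatically vanish. First I would fix a linear retraction $p\colon E\longrightarrow A$ with $p(a)=a$ for all $a\in A$, set $V=\ker p$, and define the extending datum $\Omega(A,V)=(l_1,r_1,l_2,r_2,f,\ast_2,\rho_1,\mu_1,\rho_2,\mu_2,g,\circ_2)$ by exactly the formulas appearing in the proof of Theorem~\ref{11}, e.g.\ $l_2(x)a=p(x\ast a)$, $\mu_2(x)a=p(a\circ x)$, $f(x,y)=p(x\ast y)$, $x\ast_2 y=x\ast y-p(x\ast y)$, and so on, together with $l_1(a)x=a\ast x-p(a\ast x)$ etc. By Theorem~\ref{11} this is a pre-Poisson extending structure of $A$ through $V$, and $\varphi\colon A\natural V\longrightarrow (E,\ast,\circ)$, $\varphi(a,x)=a+x$, is an isomorphism of pre-Poisson algebras stabilizing $A$ and co-stabilizing $V$.

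Next I would show that the ideal hypothesis forces $l_1,r_1,\rho_1,\mu_1$ to be trivial. Since $A$ is an ideal, $a\ast x\in A$ and $x\ast a\in A$ for all $a\in A$, $x\in V\subseteq E$; hence $p(a\ast x)=a\ast x$ and $p(x\ast a)=x\ast a$, so $l_1(a)x=a\ast x-p(a\ast x)=0$ and similarly $r_1(a)x=0$. The same argument with $\circ$ in place of $\ast$ gives $\rho_1(a)x=a\circ x-p(a\circ x)=0$ and $\mu_1(a)x=x\circ a-p(x\circ a)=0$. Therefore $\Omega(A,V)$ has all four of $l_1,r_1,\rho_1,\mu_1$ trivial, which is precisely the condition defining a crossed system $CS(A,V)$; by the discussion preceding the proposition, the associated unified product is the crossed product $A\sharp V$, with multiplications given by \eqref{sm1} and \eqref{sm2}. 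Combining with the first paragraph, $(E,\ast,\circ)\cong A\sharp V$ via $\varphi$, which stabilizes $A$ and co-stabilizes $V$.

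The steps here are essentially bookkeeping once Theorem~\ref{11} is in hand; the only place that needs genuine care is verifying that the datum obtained really is a \emph{crossed system}, i.e.\ that, with $l_1=r_1=\rho_1=\mu_1=0$, the compatibility conditions of Proposition~\ref{5} collapse exactly to ``$(V,\ast_2,\circ_2)$ is a pre-Poisson algebra together with Eqs.~\eqref{za5}--\eqref{za11}, \eqref{pra4}--\eqref{pra8}, \eqref{a48}--\eqref{a54}, \eqref{a60}--\eqref{a64}''. But this equivalence was already recorded in the paragraph introducing crossed systems, so in the write-up I would simply cite it. I would also note that, conversely, any crossed product $A\sharp V$ contains $A\cong A\times\{0\}$ as an ideal (the computation $(a,0)\ast(b,y)=(a\ast_1 b+r_2(y)a,0)$ and $(a,0)\circ(b,y)=(a\circ_1 b+\mu_2(y)a,0)$ already appears above), so the two classes of objects match up; this is not strictly needed for the stated proposition but rounds out the picture. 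The main (mild) obstacle is therefore purely expository: making sure the reader sees that ``$A$ ideal $\Rightarrow$ the first-component actions vanish'' is all that distinguishes this from Theorem~\ref{11}.
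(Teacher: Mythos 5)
Your proposal is correct and follows essentially the same route as the paper's proof: invoke Theorem~\ref{11} to get the extending structure and the isomorphism $E\cong A\natural V$ stabilizing $A$ and co-stabilizing $V$, then use the ideal hypothesis to conclude $p(a\ast x)=a\ast x$, $p(x\ast a)=x\ast a$, $p(a\circ x)=a\circ x$, $p(x\circ a)=x\circ a$, so that $l_1=r_1=\rho_1=\mu_1=0$ and the unified product is a crossed product. No substantive differences.
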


\begin{proof}
	 Assume that $E$ is a pre-Poisson algebra such that $A$ is a pre-Poisson ideal of $E$.
 Then $A$ is a pre-Poisson subalgebra of $E$ 
and there is a linear map $p:E\longrightarrow A$ such that $p(a)=a$ for all $a\in A$. 
In view of Theorem \ref{11}, there is a pre-Poisson extending structure 
$\Omega(A, V)=(l_{1},r_{1},l_{2},r_{2},f,\ast_{2},\rho_1,\mu_1,\rho_2,\mu_2,g,\circ_{2})$ of $A$
 through the subspace $V=\mathrm{ker}(p)$ of $E$ and an isomorphism of pre-Poisson algebras $E \cong A\natural V$. 
Since $A$ is a pre-Poisson ideal of $E$, for any $x \in V$ and $a \in A$, 
we have that $p(a\ast x)=a\ast x,~p(x\ast a)=x\ast a,~p(a\circ x)=a\circ x,~p(x\circ a)=x\circ a$, 
which indicate that 
$l_{1}(a)x=a\ast x-p(a\ast x)=0,~r_{1}(a)x=x\ast a-p(x\ast a)=0,~\rho_{1}(a)x=a\circ x-p(a\circ x)=0,~\mu_{1}(b)x=x\circ a-p(x\circ a)=0$ 
and $E$ is isomorphic to the pre-Poisson algebra associated to 
the crossed system $(A,V, l_{2},r_{2},f,\rho_2,\mu_2,g)$.
\end{proof}

In the light of Proposition \ref{62}, the answer to the restricted version of the ES-problem
reduces to the classification of all crossed products associated to all crossed systems. 
This is what we do next by explicitly constructing a classification
object, denoted by $GH^2( V,A )$. 

 Combining Proposition \ref{5}, Lemma \ref{63}, Definition \ref{01} and Proposition \ref{62}, we have  
 
\begin{thm} \label{100}
Let $(A,\ast_{1},\circ_{1})$ be a pre-Poisson algebra and $E$ a vector space containing $A$ as a subspace. 
Then $\approx$ is an equivalent relation on the set $GCS(A,V)$ of all crossed systems of $A$ through by 
a subspace $V$ of $E$. Denote $GH^{2}(V,A)=GCS(A,V)/\approx$, then the map
$\Theta:GH^{2}(V,A) \longrightarrow  Extd'(E, A)$ is a bijection, where $\Theta$ assigns 
the equivalent class of $CS(A, V)$ to the equivalent class of $A\sharp V$. 
\end{thm}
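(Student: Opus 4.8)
The plan is to mirror the structure of the proof of Theorem \ref{13}, but now restricted to the subclass of extending structures in which $l_1, r_1, \rho_1, \mu_1$ are trivial, using Proposition \ref{62} in place of Theorem \ref{11}. First I would verify that $\approx$, already known to be an equivalence relation on $\mathcal{A}(A,V)$ by the discussion following Definition \ref{01}, restricts to an equivalence relation on $GCS(A,V)$. For this it suffices to observe that if $\Omega(A,V)\in GCS(A,V)$ and $\Omega(A,V)\approx\Omega'(A,V)$ via the pair $(\zeta, \eta=\mathrm{id}_V)$, then $\Omega'(A,V)$ also has trivial $l_1',r_1',\rho_1',\mu_1'$: indeed, Eqs.~\eqref{90} and \eqref{95} with $\eta=\mathrm{id}_V$ force $l_1'(a)x = \eta(l_1(a)x) = l_1(a)x = 0$, and similarly $r_1'=\rho_1'=\mu_1'=0$. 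So $\approx$ is well-defined on $GCS(A,V)$, and reflexivity, symmetry, transitivity are inherited from $\mathcal{A}(A,V)$.

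Next I would construct the map $\Theta$ and its inverse. Given $CS(A,V)$, the associated crossed product $A\sharp V$ is a pre-Poisson algebra containing $A$ as an ideal (hence as a subalgebra), so its equivalence class under $\approx$ lies in $Extd'(E,A)$ once we fix a linear isomorphism $E \cong A\oplus V$; this gives $\Theta$. That $\Theta$ is well-defined — i.e.\ $CS(A,V)\approx CS'(A,V)$ implies $A\sharp V \approx A\sharp' V$ — follows from Lemma \ref{63}: the pair $(\zeta,\mathrm{id}_V)$ satisfying \eqref{90}--\eqref{99} yields exactly the pre-Poisson isomorphism $\psi_{(\zeta,\mathrm{id}_V)}(a,x)=(a+\zeta(x),x)$ that stabilizes $A$ and co-stabilizes $V$. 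For surjectivity of $\Theta$: given any pre-Poisson structure $(E,\ast,\circ)$ with $A$ a pre-Poisson ideal, Proposition \ref{62} produces a crossed system and an isomorphism $(E,\ast,\circ)\cong A\sharp V$ stabilizing $A$ and co-stabilizing $V$, so $[E,\ast,\circ] = \Theta([CS(A,V)])$. For injectivity: if $A\sharp V \approx A\sharp' V$, the connecting isomorphism co-stabilizes $V$, so by the last sentence of Lemma \ref{63} it is of the form $\psi_{(\zeta,\mathrm{id}_V)}$, whence $(\zeta,\mathrm{id}_V)$ satisfies \eqref{90}--\eqref{99} and $CS(A,V)\approx CS'(A,V)$.

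The main obstacle, and the only place requiring genuine care rather than bookkeeping, is checking that the crossed-system conditions are stable under the relevant moves — concretely, that Proposition \ref{62} really lands us inside $GCS(A,V)$ (which it does, as its proof shows $l_1=r_1=\rho_1=\mu_1=0$) and that the equivalence $(\zeta,\mathrm{id}_V)$ never forces us out of the trivial-$l_1$ world. Since $\eta=\mathrm{id}_V$ kills precisely the obstruction, this closes cleanly. The remainder is an application of Proposition \ref{5} (to know $A\sharp V$ is a pre-Poisson algebra at all), Lemma \ref{63} (to translate isomorphisms into pairs $(\zeta,\eta)$), Definition \ref{01} (the definition of $\approx$), and Proposition \ref{62} (surjectivity), exactly as indicated in the statement's preamble, so I would simply assemble these four ingredients along the lines above.
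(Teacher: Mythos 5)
Your proposal is correct and takes essentially the same route as the paper, which gives no written proof beyond the phrase ``Combining Proposition \ref{5}, Lemma \ref{63}, Definition \ref{01} and Proposition \ref{62}, we have'' the theorem; you assemble exactly those four ingredients. Your observation that Eqs.~\eqref{90} and \eqref{95} with $\eta=\mathrm{id}_V$ force the trivial maps $l_1',r_1',\rho_1',\mu_1'$ to remain trivial (so that $\approx$ genuinely restricts to $GCS(A,V)$) is a detail the paper leaves implicit, and it is handled correctly.
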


Calculating $GH^{2}(V,A)$ is not a trivial problem. We connect it with the classical extension problem.

\begin{defi}
 Let $A$ and $B$ be two pre-Poisson algebras.
 A {\bf non-abelian extension} of $A$ by $B$ is a pre-Poisson algebra $E$ equipped with a short exact sequence of pre-Poisson algebras:
	\begin{align*}
		\mathcal{E}:0\longrightarrow B\stackrel{i}{\longrightarrow} E\stackrel{p}{\longrightarrow}A\longrightarrow0.
	\end{align*}
When $B$ is abelian, the extension is called an {\bf abelian extension} of $A$ by $B$. A {\bf section} of $p$ is a linear map $s: A
\rightarrow E $ such that $ps = I_{A}$.
\end{defi}

Let $ E_1$ and $E_2$ be two non-abelian extensions of $A$ by $B$. They are said to be
{\bf cohomologous} if there is an isomorphism of pre-Poisson algebras
$\varphi:E_1\longrightarrow E_2$ such that
the following commutative diagram holds:
 \begin{equation}\label{E1} \xymatrix{
  0 \ar[r] & B\ar@{=}[d] \ar[r]^{i_1} & E_1\ar[d]_{\varphi} \ar[r]^{p_1} & A \ar@{=}[d] \ar[r] & 0\\
 0 \ar[r] & B \ar[r]^{i_2} & E_2 \ar[r]^{p_2} & A  \ar[r] & 0
 .}\end{equation}
 
Obviously, any crossed product $A\sharp V$ is an extension of the pre-Poisson algebra $V$ by $A$ 
equipped with the short exact sequence of pre-Poisson algebras:
\begin{align}\label{E2}
	0\longrightarrow A\stackrel{i_A}{\longrightarrow} A\sharp V \stackrel{\pi_V}{\longrightarrow}V\longrightarrow0,
	\end{align}
where $i_A$ and $\pi_V$ are injection and projection respectively. 

It is natural to describe all pre-Poisson algebras $E$ which are extensions 
of $A$ by $B$. This is the classical extension problem. 

\begin{thm} \label{Es}
Let $(A,\ast_{1},\circ_{1})$ and $(V,\ast_{2},\circ_{2})$ be two pre-Poisson algebras.
Assume that
 \begin{equation*}0\longrightarrow A\stackrel{i}{\longrightarrow} E\stackrel{p}{\longrightarrow} V\longrightarrow0\end{equation*} 
  is
an extension of $(V,\ast_{2},\circ_{2})$ by
$(A,\ast_{1},\circ_{1})$ with a section $s$ of $p$. Then any 
extension of $V$ by $A$ is cohomologous to a crossed extension (\ref{E2}). 
\end{thm}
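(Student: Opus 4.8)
The plan is to use the structural results already developed, especially Proposition~\ref{62} and Theorem~\ref{11}, to reduce the classical extension problem to the classification of crossed products. Given an extension $0\to A\xrightarrow{i} E\xrightarrow{p} V\to 0$ with a section $s$ of $p$, the first step is to observe that the image $i(A)$ is a pre-Poisson ideal of $E$: indeed, for $a\in A$ and $e\in E$ one has $p(i(a)\ast e)=p(i(a))\ast_2 p(e)=0$ since $p(i(a))=0$, and similarly for $e\ast i(a)$, $i(a)\circ e$ and $e\circ i(a)$, so $i(a)\ast e, e\ast i(a), i(a)\circ e, e\circ i(a)\in\ker p=i(A)$. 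Hence $E$ is a pre-Poisson algebra containing (a copy of) $A$ as a pre-Poisson ideal.

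Next I would invoke Proposition~\ref{62}: any pre-Poisson algebra structure on $E$ containing $A$ as a pre-Poisson ideal is isomorphic, via a map stabilizing $A$ and co-stabilizing the complement $W=\ker(\widetilde p)$ (where $\widetilde p:E\to A$ is the retraction built from $s$ in the proof of Theorem~\ref{11}), to a crossed product $A\sharp W$ associated to a crossed system $(l_2,r_2,f,\rho_2,\mu_2,g)$. The point to check here is that the complement $W$ can be identified with $V$ as a pre-Poisson algebra: the section $s:V\to E$ is a linear isomorphism onto a complement of $i(A)$, and transporting the quotient pre-Poisson structure of $V$ along $s$ agrees (modulo the values landing in $A$, which are precisely absorbed by $f$ and $g$) with the operations $\ast_2,\circ_2$ on $W$ coming from the construction in Theorem~\ref{11}. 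Concretely, $x\ast_2 y = x\ast y - p'(x\ast y)$ for $x,y\in W$, and under the identification $W\cong V$ this is exactly the given $\ast_2$ on $V$ since $p$ is a pre-Poisson homomorphism; likewise for $\circ_2$. So the crossed system is built on the given pre-Poisson algebra $V$, and $A\sharp W \cong A\sharp V$ as crossed products.

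Finally, the crossed product $A\sharp V$ sits inside the canonical short exact sequence \eqref{E2}, namely $0\to A\xrightarrow{i_A} A\sharp V\xrightarrow{\pi_V} V\to 0$, and the isomorphism $\varphi:E\to A\sharp V$ from Proposition~\ref{62} stabilizes $A$ and co-stabilizes $V$. This precisely says that the diagram \eqref{E1}, with $E_1=E$, $E_2=A\sharp V$, $B=A$, the identity on $A$ on the left and the identity on $V$ on the right (because $\varphi$ co-stabilizes $V$ and the sections are compatible), commutes. Therefore the original extension is cohomologous to the crossed extension \eqref{E2}, which is the assertion.

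I expect the main obstacle to be the bookkeeping in the middle step: making the identification $W\cong V$ rigorous and verifying that under it the operations $\ast_2,\circ_2$ produced by the retraction construction of Theorem~\ref{11} coincide with the originally given pre-Poisson operations on $V$, so that the crossed system is genuinely a crossed system \emph{of $A$ through the pre-Poisson algebra $V$} rather than through some abstract complement. This amounts to chasing the definitions $x\ast_2 y=x\ast y-p(x\ast y)$ and $x\circ_2 y=x\circ y-p(x\circ y)$ through the section $s$ and using that $p$ respects both products; it is routine but is the only place where the hypothesis that $V$ carries a prescribed pre-Poisson structure (and not just that $E/A\cong V$ as vector spaces) is actually used.
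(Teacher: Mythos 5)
Your proposal is correct and follows essentially the same route as the paper: both use the section $s$ to split $E=A\oplus s(V)$, read off the crossed system $(l_2,r_2,f,\rho_2,\mu_2,g)$ from the multiplication of $E$ restricted to $A$ and $s(V)$, and check that $\varphi(a,x)=a+s(x)$ is an isomorphism making the diagram \eqref{E1} commute. The only organizational difference is that you route the argument through Proposition \ref{62} and then identify the complement $W=s(V)$ with $V$ (correctly noting that $x\ast_2 y=sp(s(x)\ast_E s(y))=s(x\ast_2 y)$ because $p$ is a homomorphism), whereas the paper defines the crossed system directly on $V$ via $s$; the underlying computation is the same.
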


\begin{proof}Define linear maps
\begin{align*}l_{2},r_{2},\rho_{2},\mu_{2} : V \longrightarrow End(A)\end{align*}
  and bilinear maps 
  \begin{align*} f,g: V\times V \longrightarrow A\end{align*}
  respectively by
 \begin{equation}\label{sm3} l_{2}(x)a=s(x)\ast_{E} a,~~r_{2}(x)a=a\ast_{E} s(x),\end{equation} 
  \begin{equation}\label{sm4} \rho_{2}(x)a=s(x)\circ_{E} a,~~\mu_{2}(x)a=a\circ_{E} s(x),\end{equation} 
  \begin{equation}\label{sm5} f(x,y)=s(x)\ast_{E} s(y)-s(x\ast_2 y),~~g(x,y)=s(x)\circ_{E} s(y)-s(x\circ_2 y),\end{equation} 
  for all $x,y\in V$ and $a\in A$. Then  
  \begin{equation}\varphi:  A\oplus V \to E,~\varphi(a,x)=a+s(x),~\forall~a\in A,~x\in V\end{equation}  
  is a linear isomorphism with the inverse  
  \begin{equation}\varphi^{-1}:E\longrightarrow A\oplus V,~\varphi^{-1}(e)=(e-sp(e),p(e)),~\forall~e\in E.\end{equation} 
   We only need to make sure that the pre-Poisson structure $(\ast,\circ)$ that can be defined on $A\oplus V$ such that 
  $\varphi$ is an isomorphism of pre-Poisson algebras coincides with the one given by Eqs.\eqref{sm1}-\eqref{sm2}.
   In fact, since $p$ is homomorphism of pre-Poisson algebras, for all $a,b\in A$ and $x,y\in V$, we have
 \begin{align}\label{sm8}&p(a\ast_{E} b+s(x)\ast_{E}s(y)+s(x)\ast_{E} b+a\ast_{E} s(y))
\\  \notag=& p(a)\ast_{E} p(b)+ps(x)\ast_{E}ps(y)+ps(x)\ast_{E} p(b)+p(a)\ast_{E} ps(y)
 \\  \notag=&x\ast_{E}y=x\ast_{2}y
 \end{align}
 and
  \begin{equation}\label{sm9}sp(a\ast_{E} b+s(x)\ast_{E}s(y)+s(x)\ast_{E} b+a\ast_{E} s(y))=s(x\ast_{2}y).\end{equation}
 By Eqs. \eqref{sm3}-\eqref{sm9}, we obtain
 \begin{align*}
 	&(a,x)\ast (b,y)=\varphi^{-1}(\varphi(a,x)\ast_{E} \varphi(b,y))
 \\=&\varphi^{-1}((a+s(x))\ast_{E} (b+s(y)))\\=&\varphi^{-1}(a\ast_{E} b+s(x)\ast_{E}s(y)+s(x)\ast_{E} b+a\ast_{E} s(y))
 \\=&(a\ast_1 b+l_{2}(x)b+r_{2}(y)a+f(x,y),x\ast_2 y).
 \end{align*}
 Analogously, 
 \begin{equation*}(a,x)\circ(b,y)=(a\circ_1 b+\rho_{2}(x)b+\mu_{2}(y)a+g(x,y),x\circ_2 y).\end{equation*}
In addition, it is easy to check that the following diagram is commutative:
    \begin{equation*} \xymatrix{
  0 \ar[r] & A\ar@{=}[d] \ar[r]^{i_A} & A\oplus V\ar[d]_{\varphi} \ar[r]^{\pi_V} & V \ar@{=}[d] \ar[r] & 0\\
 0 \ar[r] & A \ar[r]^{i} & E \ar[r]^{p} & V  \ar[r] & 0 .}\end{equation*}
  The proof is completed. 
\end{proof}

Theorem \ref{Es} indicates that computing the classifying object $ Extd'(E, A)$ reduces to the classification
of all crossed extension of (\ref{E2}).

Let $CS(A,V)=(l_{2},r_{2},f,\ast_{2},\rho_{2},\mu_{2},g,\circ_{2})$ be a crossed system of
$(A,\ast_{1},\circ_{1})$ through vector space $V$. For a fixed pre-Poisson algebra $(V,\ast_{2},\circ_{2})$,
let $CS_{l} (A,V)$ be the set of six maps $l_{2},r_{2},f,\rho_{2},\mu_{2},g$
 satisfying Eqs. \eqref{za5}-\eqref{za11}, \eqref{pra4}-\eqref{pra8}, \eqref{a48}-\eqref{a54} and \eqref{a60}-\eqref{a64},
  which is called the 
{\bf local crossed system (non-abelian 2-cocycle)} of $(V,\ast_{2},\circ_{2})$ by $(A,\ast_{1},\circ_{1})$.

\begin{defi}Two local crossed systems $CS_{l} (A,V)=(l_{2},r_{2},f,\rho_{2},\mu_{2},g)$ and 
$CS_{l} ^{'}(A,V)=(l'_{2},r'_{2},f',\rho'_{2},\mu'_{2},g')$ are called {\bf cohomologous}, denoted by 
$CS_{l} (A,V)\approx CS_{l}^{'}$
$ (A,V)$,
if there is a linear map $\zeta:V \longrightarrow A$ satisfying 
Eqs.~(\ref{91})-(\ref{94}), (\ref{96})- (\ref{97}) and (\ref{99}).\end{defi}

Denote by $Z^{2}_{nab}(V,A)$ the set of all local crossed systems $CS_{l} (A,V)$ of $(V,\ast_{2},\circ_{2})$ 
through $(A,\ast_{1},\circ_{1})$. The quotient of $Z^{2}_{nab}(V,A)$ by the above
equivalence relation is denoted by $H^{2}_{nab}(V,A)$, which is called the non-abelian second cohomology group
of $V$ in $A$. It is easy to check that $H_{nab}^{2}(V,A)=H^{2}(V,A)$ when
$A$ is an abelian pre-Poisson algebra, where $H^{2}(V,A)$ is the second cohomology group
of $V$ in $A$. 

Analogous to Theorem \ref{100}, we have

\begin{thm} \label{101}
Let $(A,\ast_{1},\circ_{1})$ be a pre-Poisson algebra.
\begin{enumerate}
	\item If $(V,\ast_{2},\circ_{2})$ is a fixed pre-Poisson algebra. The map
	\begin{equation*}\Phi:H^{2}_{nab}(V,A)\longrightarrow  Extd'(E, A),~~\Phi([Z^{2}_{nab}(V,A)])=[A\sharp V]\end{equation*}
	is a bijection, where $[Z^{2}_{nab}(V,A)]$ is the equivalent class of $Z^{2}_{nab}(V,A)$
	and $[A\sharp V]$ is the equivalent class of the crossed extension defined by (\ref{E2}).
	\item Let $E$ a vector space containing $A$ as a pre-Poisson ideal and $V$ a vector subspace of $E$.
	Then 
	\begin{equation*} GH^{2}(V,A)=\bigsqcup_{(V,\ast_{2},\circ_{2})}H^{2}_{nab}(V,A).\end{equation*}
\end{enumerate}
\end{thm}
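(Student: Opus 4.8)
The plan is to obtain both statements from Theorem~\ref{100} and Theorem~\ref{Es} by isolating, inside the set $GCS(A,V)$ of all crossed systems, the subfamily of those whose induced structure on $V$ is a prescribed pre-Poisson algebra $(V,\ast_2,\circ_2)$; this subfamily is precisely $Z^2_{nab}(V,A)$.

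\textbf{Part (1).} Fix $(V,\ast_2,\circ_2)$. A crossed system $CS(A,V)=(l_2,r_2,f,\ast_2,\rho_2,\mu_2,g,\circ_2)$ carries $(\ast_2,\circ_2)$ as part of its data, so the crossed systems that induce this particular structure on $V$ are in bijection with the local crossed systems $(l_2,r_2,f,\rho_2,\mu_2,g)\in Z^2_{nab}(V,A)$, and under this bijection the crossed product $A\sharp V$ becomes the crossed extension \eqref{E2}. First I would check that the relation $\approx$ between such crossed systems, as spelled out via Definition~\ref{01} and Lemma~\ref{63}, reduces to the relation $\approx$ between local crossed systems: for a crossed system $l_1=r_1=\rho_1=\mu_1=0$, so taking $\eta=\mathrm{id}_V$ in \eqref{90}--\eqref{99} makes the conditions involving only $\eta$ vacuous and forces $\ast_2=\ast_2'$, $\circ_2=\circ_2'$, while the conditions on $\zeta$ are exactly \eqref{91}--\eqref{94}, \eqref{96}--\eqref{97}, \eqref{99}, i.e. $CS_l(A,V)\approx CS_l'(A,V)$. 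Conversely, any pre-Poisson isomorphism fitting into the commutative diagram \eqref{E1} between two crossed extensions must restrict to the identity on $A$ and induce the identity on $V$, hence has the form $(a,x)\mapsto(a+\zeta(x),x)=\psi_{(\zeta,\mathrm{id}_V)}(a,x)$, and Lemma~\ref{63} then identifies ``cohomologous'' for crossed extensions with ``cohomologous'' for the underlying local crossed systems. This makes $\Phi$ well defined and injective, and Theorem~\ref{Es} --- every extension of $V$ by $A$ is cohomologous to a crossed extension \eqref{E2} --- supplies surjectivity. Hence $\Phi$ is a bijection.

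\textbf{Part (2).} Now $V$ is a fixed vector space. Since a crossed system of $A$ through $V$ records a pre-Poisson structure $(\ast_2,\circ_2)$ on $V$ among its data, and conversely every such structure together with an element of $Z^2_{nab}(V,A)$ assembles into a crossed system, one has $GCS(A,V)=\bigsqcup_{(\ast_2,\circ_2)}Z^2_{nab}(V,A)$ as sets, the union running over all pre-Poisson structures on $V$. By the computation in Part~(1), $CS(A,V)\approx CS'(A,V)$ forces $\ast_2=\ast_2'$ and $\circ_2=\circ_2'$, so $\approx$ preserves each summand and restricts on it to the relation defining $H^2_{nab}(V,A)$; passing to quotients gives $GH^2(V,A)=GCS(A,V)/{\approx}=\bigsqcup_{(\ast_2,\circ_2)}H^2_{nab}(V,A)$.

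\textbf{Main obstacle.} No genuinely new argument is required beyond Theorems~\ref{100} and~\ref{Es}; the one point that deserves care is to match the two notions of ``cohomologous'' --- the extension-theoretic one given by commutativity of \eqref{E1} and the extending-structure one of Definition~\ref{01} (stabilizing $A$ and co-stabilizing $V$) --- so that the linear map $\zeta$ used to define $CS_l(A,V)\approx CS_l'(A,V)$ is exactly the datum produced by Lemma~\ref{63} with $\eta=\mathrm{id}_V$. Once this identification is in place, both parts are routine bookkeeping.
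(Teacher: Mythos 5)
Your proposal is correct and follows essentially the same route the paper intends: the paper offers no explicit proof of this theorem beyond "analogous to Theorem~\ref{100}," and your argument is precisely the combination of Theorem~\ref{Es} (surjectivity), Lemma~\ref{63} with $\eta=\mathrm{id}_V$ and $l_1=r_1=\rho_1=\mu_1=0$ (matching the two notions of cohomologous and forcing $\ast_2=\ast_2'$, $\circ_2=\circ_2'$), and the resulting decomposition of $GCS(A,V)$ that those references are meant to supply. The only caveat is inherited from the paper's own wording: the codomain of $\Phi$ in part (1) should be read as the classes in $Extd'(E,A)$ of extensions of the fixed $(V,\ast_2,\circ_2)$ by $A$, exactly as you implicitly do.
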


In general, it is not easy to calculate $GH^{2}(V,A)$. When $A$ and $V$ are abelian pre-Poisson algebras, we denote by $A_0$ and $V_{0}$ respectively.
Then $GH^{2}(A_0,V_0)=H^{2}_{nab}(V_0,A_0)$.

\begin{ex}
Let $k_0^{n}$ be an abelian pre-Poisson algebra of dimension $n$ with basis $\{E_1,E_2,\cdot\cdot\cdot,E_n \}$
and
$k_0$ a 1-dimensional abelian pre-Poisson algebra with basis $E_{n+1}$. Suppose that
 $CS_{l} (k_0^{n},k_0)=(\rho_{2},\mu_{2},g,l_{2},r_{2},f)$ 
is the local system of $k_0^{n}$ through $k_0$ and 
$(k_0^{n}\sharp k_0,\ast,\circ)$ is the corresponding 
crossed product of $k_0^{n}$ through $k_0$. Assume that
\begin{align*}
		&E_{n+1}\circ E_{i}:=\rho_2(E_{n+1})E_i=\sum_{j = 1}^n a_{ji}E_{j},\  \ \ E_{i}\circ E_{n+1}:=\mu_2(E_{n+1})E_i=\sum_{j = 1}^n b_{ji}E_{j},\\
&E_{n+1}\circ E_{n+1}:=g(E_{n+1},E_{n+1})=\sum_{j = 1}^n \theta_{0j}E_{j}, \  \ \
		E_{n+1}\ast E_{i}:=l_2(E_{n+1})E_i=\sum_{j = 1}^n c_{ji}E_{j},
\\&E_{i}\ast E_{n+1}:=r_2(E_{n+1})E_i=\sum_{j = 1}^n d_{ji}E_{j}, \ \ \ E_{n+1}\ast E_{n+1}:=f(E_{n+1},E_{n+1})=\sum_{j = 1}^n \upsilon_{0j}E_{j}.
	\end{align*}	
Combining Eqs.~(\ref{a40})-(\ref{a65}), we have
\begin{align*}
	&AB=BA+B^{2}, \ \ DC=CD+D^{2}, \ \ C^{2}=0, \ \ AC=CA=0, \\&BC=CB+DB=BD-AD=-DA, \ \
	A\upsilon_{0}=B\upsilon_{0}=C\theta_{0},~C\upsilon_{0}=2D\upsilon_{0},
\end{align*}
where $A=(a_{ij})_{n\times n},~B=(b_{ij})_{n\times n},~C=(c_{ij})_{n\times n},~D=(d_{ij})_{n\times n}\in M_{n}(k)$ and
$\theta_{0}=(\theta_{0j})_{n\times 1},\upsilon_{0}=(\upsilon_{0j})_{n\times 1}\in k^{n}$ are matrices over the field $k$.

Two six-tuples $(A,B,C,D,\upsilon_{0},\theta_{0})$ and $(A',B',C',D',\upsilon'_{0},\theta'_{0})$ are cohomologous,
denoted by $(A,B,C,D,\upsilon_{0},\theta_{0})\approx(A',B',C',D',\upsilon'_{0},\theta'_{0})$,
if and only if $A=A'$, $B=B'$, $C=C'$, $D=D'$ and there exists $w\in k^{n}$ 
such that $\theta_{0}-\theta'_{0}=(A+B)w,~\upsilon_{0}-\upsilon'_{0}=(C+D)w$.
Let $\mathfrak{C}(n)$ be the set of all six-tuples $(A,B,C,D,\upsilon_{0},\theta_{0})$. 
Then, $GH^{2}(k_{0},k^{n}_{0})\cong \mathfrak{C}(n)/\approx$. In the following, we give a detail.

Assume that $Z^{2}_{nab}(k_{0},k^{n}_{0})$ is the set of all local systems of
 $k^{n}_{0}$ through $k_{0}$. 
 Denote by $L$ the set of all six-tuples $(\alpha,\beta,\theta_{0},\gamma,\lambda,\upsilon_{0})$,
 where $\alpha,\beta,\gamma,\lambda:k^{n}\longrightarrow k^{n}$ are three linear maps 
and 
 $\theta_{0},\upsilon_{0}\in k^{n}$ are vectors, satisfying the compatibility conditions 
\begin{align*}
		&\alpha \beta=\beta \alpha+\beta^{2}, \ \ \lambda \gamma=\gamma \lambda+\lambda^{2},\ \ \gamma^{2}=0,\ \
		\alpha \gamma=\gamma \alpha=0,\\&\beta \gamma=\gamma\beta+\lambda \beta=\beta\lambda-\alpha \lambda=-\lambda\alpha,\ \
		\alpha \upsilon_{0}=\beta \upsilon_{0}=\gamma \theta_{0}, \ \ \gamma \upsilon_{0}=2\lambda \upsilon_{0}.
	\end{align*} 
It is easy to check that there is a bijection between the sets $Z^{2}_{nab}(k_{0},k^{n}_{0})$ and $L$.
Thus, we can obtain a local system $(\rho_2,\mu_2,g,l_{2},r_{2},f)$ corresponding to 
$(\alpha,\beta,\theta_{0},\gamma,\lambda,\upsilon_{0})$ as follows:
	\begin{align*}
		&\rho_2(x) a:=a\alpha(x), \ \  \ \mu_2(x)a:=a\beta(x), \ \ \ g(x,y):=xy\theta_{0},\\
		&l_{2}(x)a:=a\gamma(x), \ \  \ r_{2}(x)a:=a\lambda(x), \ \ \ f(x,y):=xy\upsilon_{0},
	\end{align*} 
	for all $x,y\in k^{n}$ and $a\in k$. Assume that $\{e_{1},e_{2},\cdots,e_{n}\}$ is the canonical basis of $k^{n}$.
Let $A,B,C,D$ be the matrices associated to $\alpha,\beta,\gamma$ and $\lambda$ with respect to this basis respectively.
 If we take the vectors $E_{1}=(e_{1},0),\cdots,E_{n}=(e_{n},0)$ and $E_{n+1}=(0,1)$ as a basis in $k^{n}\times k$, then the pre-Poisson algebra $k^{n+1}_{(A,B,C,D,\upsilon_{0},\theta_{0})}$ is just the crossed product $k^{n}_{0}\sharp k_{0}$ with the structures given by 
  \begin{align*}
		&(a,x)\ast (b,y)=(l_2(x)b+r_2(y)a+f(x,y),0),\\
		&(a,x)\circ (b,y)=(\rho_2(x)b+\mu_2(y)a+g(x,y),0),
	\end{align*}
  for all $x,y\in k_{0}$ and $a,b\in k^{n}_{0}$. Therefore, $GH^{2}(k_{0},k^{n}_{0})\cong \mathfrak{C}(n)/\approx$.
\end{ex}

%%%%%%%%%%%%%%%%%%%%%%%%%%%%%%%%%%%%%%%%%%%%%%%%%%%%%%%%%%%%%%%%%%%%%%%%%%%%%%%%%%%%%%%%%%%%%%%%%%%%%%%%%%
\subsection{The factorization problem}

Let $\Omega(A,V)=(l_{1},r_{1},l_{2},r_{2},f,\ast_{2},\rho_1,\mu_1,\rho_2,\mu_2,g,\circ_{2})$ 
    be an extending datum of $A$ through $V$. If $f$ and $g$ are trivial, by Theorem \ref{11}, we get that $\Omega(A,V)$
 is an extending structure of $A$ through $V$ if and only if $(V,\ast_2,\circ_2)$ is a pre-Poisson algebra,
$(A, l_2,r_2,\rho_{2},\mu_{2}) $ is a bimodule of $V$, $(V,\rho_{1},\mu_{1},l_{1},r_{1})$ is a bimodule of $A$
and they satisfy Eqs. (\ref{za2})-(\ref{za7}), (\ref{pra2})-(\ref{pra5}), (\ref{a45})-(\ref{a50}) and (\ref{a58})-(\ref{a61}). In this case, the associated
unified product
$A\natural V$ is denoted by $A \bowtie V$, called the bicrossed product of $A$ by $V$ and $(A,V,l_{1},r_{1},l_{2},r_{2},\rho_1,\mu_1,\rho_2,\mu_2)$ is
called the matched pair of pre-Poisson algebras $A$ and $V$. 
For the completeness, we write them in detail.

\begin{thm}\label{thm:3.8}
	 Let $(A_1,\ast_1,\circ_1)$ and $(A_2,\ast_2,\circ_2)$ be two pre-Poisson
algebras. Suppose that there are linear maps
$l_{1},r_{1},\rho_1,\mu_1:A_1\longrightarrow
\hbox{End} (A_2)$ and
$l_{2},r_{2},\rho_2,\mu_2:A_2\longrightarrow
\hbox{End} (A_1)$.
Define multiplications on $A_1\oplus A_2$   by
\begin{eqnarray}
&&(x,a)\ast
(y,b)=(x{\ast_1}y+l_{2}(a)y+r_{2}(b)x,a{\ast_2}b+l_{1}(x)b+r_{1}(y)a)
,\label{ppmp eq1.1}\\
&&(x,a)\circ
(y,b)=(x{\circ_1}y+\rho_2(a)y+\mu_2(b)x,a{\circ_2}b+\rho_1(x)b+\mu_1(y)a)
,~~\forall~x,y\in A_1,a,b\in A_2.\label{ppmp eq1.2}
\end{eqnarray}
Then $(A_{1}\oplus A_{2},\ast,\circ)$ is a pre-Poisson algebra if and only if  the following conditions hold:
\begin{enumerate}
	\item $(A_2,l_{1},r_{1},\rho_1,\mu_1)$ is a
	representation of $(A_1,\ast_1,\circ_1)$.
	\item $(A_1,l_{2},r_{2},\rho_2,\mu_2)$ is a
	representation of $(A_2,\ast_2,\circ_2)$.
	\item $(A_1,
	A_2, l_{1},r_{1},l_{2},r_{2})$ is a matched pair
	of Zinibel algebras.
	\item $(A_1,
	A_2,\rho_1,\mu_1,\rho_2,\mu_2)$ is a matched
	pair of pre-Lie algebras.
	\item The
	following compatible conditions hold:
	\begin{align}&\label{ppmp eq1.3}r_{2}(a)(x\circ_{1} y-y\circ_{1} x)=x\circ_{1}
		r_{2}(a)y-y\ast_{1}\mu_2(a)x+\mu_2(l_{1}(y)a)x-r_{2}(\rho_1(x)a)y,\\&
\label{ppmp eq1.4}(\mu_2(a)x-\rho_2(a)x)
		\ast_{1} y+l_{2}(\rho_1(x)a-\mu_1(x)a)y=x\circ_{1}
		l_{2}(a)y+\mu_2(r_{1}(y)a)x-
		l_{2}(a)(x\circ_1 y),\\&\label{ppmp eq1.5}(\rho_2(a)x-\mu_2(a)x)
		\ast_{1} y+l_{2}(\mu_1(x)a-\rho_1(x)a)y=
		\rho_2(a)(x\ast_{1}y)-x\ast_{1}
		\rho_2(a)y-r_{2}(\mu_1(y)a)x,
\\&\label{ppmp eq1.6}\mu_2(a)(x\ast_{1} y+y\ast_{1} x)=x\ast_{1}
		\mu_2(a)y+y\ast_{1}
		\mu_2(a)x+r_{2}(\rho_1(y)a)x+r_{2}(\rho_1(x)a)y,
\\&\label{ppmp eq1.7}(r_{2}(a)x+l_{2}(a)x)
		\circ_{1} y+\rho_2(l_{1}(x)a+r_{1}(x)a)y=x\ast_{1}
		\rho_2(a)y+r_{2}(\mu_1(y)a)x+
		l_{2}(a)(x\circ_1 y),\\&\label{ppmp eq1.8}r_{1}(x)(a\circ_{2} b-b\circ_{2} a)=a\circ_{2}
		r_{1}(x)b-b\ast_{2}
		\mu_1(x)a+\mu_1(l_{2}(b)x)a-r_{1}(\rho_2(a)x)b,\\&\label{ppmp eq1.9}(\mu_1(x)a-\rho_1(x)a)
		\ast_{2} b+l_{1}(\rho_2(a)x-\mu_2(a)x)b=a\circ_{2}
		l_{1}(x)b+\mu_1(r_{2}(b)x)a-
		l_{1}(x)(a\circ_2 b),\\&\label{ppmp eq1.10}(\rho_1(x)a-\mu_1(x)a)
		\ast_{2} b+l_{1}(\mu_2(a)x-\rho_2(a)x)b=
		\rho_1(x)(a\ast_{2}b)-a\ast_{2}
		\rho_1(x)b-r_{1}(\mu_2(b)x)a,\\&\label{ppmp eq1.11}\mu_1(x)(a\ast_{2} b+b\ast_{2} a)=a\ast_{2}
		\mu_1(x)b+b\ast_{2}
		\mu_1(x)a+r_{1}(\rho_2(b)x)a+r_{1}(\rho_2(a)x)b,\\&\label{ppmp eq1.12}(r_{1}(x)a+l_{1}(x)a)
		\circ_{2} b+\rho_1(l_{2}(a)x+r_{2}(a)x)b=a\ast_{2}
		\rho_1(x)b+r_{1}(\mu_2(b)x)a+
		l_{1}(x)(a\circ_2 b),\end{align}
	for all $x,y\in A_1,a,b\in A_2$.
\end{enumerate}
In this case, we
denote this pre-Poisson algebra by $A_1\bowtie A_2$, and 
$(A_1, A_2,
l_{1},r_{1},\rho_1,\mu_1,l_{2},r_{2},\rho_2,\mu_2)$
satisfying the above conditions is called a {\bf matched pair of
pre-Poisson algebras}. \end{thm}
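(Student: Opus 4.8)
The plan is to observe that the pre-Poisson algebra defined by \eqref{ppmp eq1.1}--\eqref{ppmp eq1.2} is nothing but the \emph{bicrossed product} $A\bowtie V$ coming from the unified product of Proposition~\ref{5} in the special case $f=g=0$. Explicitly, I would set $A=A_1$, $V=A_2$ and identify an element $(x,a)$ of $A_1\oplus A_2$ with the element $(a,x)$ appearing in \eqref{4} and \eqref{3}; under this identification the maps $\ast,\circ$ of \eqref{ppmp eq1.1}--\eqref{ppmp eq1.2} are exactly those of \eqref{4}, \eqref{3} with $f$ and $g$ trivial. Proposition~\ref{5} then says that $(A_1\oplus A_2,\ast,\circ)$ is a pre-Poisson algebra if and only if the whole list \eqref{a40}--\eqref{a65} holds with $f=g=0$. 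So the entire task is to check that, under the standing hypothesis that $(A_1,\ast_1,\circ_1)$ and $(A_2,\ast_2,\circ_2)$ are pre-Poisson algebras, the system \eqref{a40}--\eqref{a65} with $f=g=0$ is equivalent to the conjunction of conditions (1)--(5).

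I would carry out this reduction term by term. Setting $f=0$, condition \eqref{a40} reduces, exactly as in the remark following Theorem~\ref{22}, to ``$(A_1,A_2,l_1,r_1,l_2,r_2)$ is a matched pair of Zinbiel algebras'', i.e. to \eqref{za2}--\eqref{za7} together with $(A_2,l_1,r_1)$ being a bimodule of $(A_1,\ast_1)$ and $(A_1,l_2,r_2)$ a bimodule of $(A_2,\ast_2)$; dually, setting $g=0$ the pre-Lie extending-structure condition reduces, by the remark after Theorem~\ref{21}, to ``$(A_1,A_2,\rho_1,\mu_1,\rho_2,\mu_2)$ is a matched pair of pre-Lie algebras''. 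Putting $f=g=0$ throughout the rest: the equations \eqref{a42}, \eqref{a43}, \eqref{a44}, \eqref{a56}, \eqref{a57} are, after the obvious change of variables, precisely the compatibility relations \eqref{rppa 1.1}--\eqref{rppa 1.5} for $(l_1,r_1,\rho_1,\mu_1)$, so together with the two bimodule statements already obtained they say that $(A_2,l_1,r_1,\rho_1,\mu_1)$ is a representation of $(A_1,\ast_1,\circ_1)$ --- condition (1); symmetrically \eqref{a51}, \eqref{a52}, \eqref{a53}, \eqref{a62}, \eqref{a63} with $f=g=0$ give \eqref{rppa 1.1}--\eqref{rppa 1.5} for $(l_2,r_2,\rho_2,\mu_2)$ (with the two arguments of each relation interchanged), yielding condition (2); the equations \eqref{a55}, \eqref{a65} become \eqref{ppa eq1.1}, \eqref{ppa eq1.2} for $(A_2,\ast_2,\circ_2)$ and so hold automatically, while the remaining equations that involved $f$ or $g$ --- such as \eqref{a54} and \eqref{a64} --- collapse to $0=0$; finally \eqref{a45}--\eqref{a50} and \eqref{a58}--\eqref{a61} are, after the relabelling $(x,a)\leftrightarrow(a,x)$, verbatim \eqref{ppmp eq1.3}--\eqref{ppmp eq1.12}, which is condition (5). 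This exhausts \eqref{a40}--\eqref{a65}, proving the stated equivalence.

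A self-contained alternative, if one prefers not to route through Proposition~\ref{5}, is to argue directly: $(A_1\oplus A_2,\ast)$ is a Zinbiel algebra if and only if $(A_1,A_2,l_1,r_1,l_2,r_2)$ is a matched pair of Zinbiel algebras, and $(A_1\oplus A_2,\circ)$ is a pre-Lie algebra if and only if $(A_1,A_2,\rho_1,\mu_1,\rho_2,\mu_2)$ is a matched pair of pre-Lie algebras (the bicrossed-product statements for Zinbiel and for pre-Lie algebras). It then remains to impose the two pre-Poisson axioms \eqref{ppa eq1.1}--\eqref{ppa eq1.2} for all $X,Y,Z\in A_1\oplus A_2$; by multilinearity it suffices to run through the $2^3=8$ cases in which each of $X,Y,Z$ lies in $A_1$ or in $A_2$. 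The all-$A_1$ and all-$A_2$ cases are the pre-Poisson axioms of $A_1$ and $A_2$; the cases with exactly one argument in the ``wrong'' summand reproduce \eqref{rppa 1.1}--\eqref{rppa 1.5} for $(l_1,r_1,\rho_1,\mu_1)$ and for $(l_2,r_2,\rho_2,\mu_2)$; and the genuinely mixed cases give precisely \eqref{ppmp eq1.3}--\eqref{ppmp eq1.12}.

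There is no conceptual obstacle here; the theorem merely unpacks the bicrossed product, and all the labour is bookkeeping. The delicate point is the accounting: I must verify that each of the (at most) sixteen case-expansions of \eqref{ppa eq1.1} and \eqref{ppa eq1.2} --- equivalently each of \eqref{a45}--\eqref{a65} with $f=g=0$ --- lands on exactly one of \eqref{ppmp eq1.3}--\eqref{ppmp eq1.12} or else is already forced by conditions (1)--(4) or by $A_2$ being pre-Poisson, with nothing omitted or double-counted, and that signs and argument orderings are tracked faithfully when passing between the labellings $(a,x)$ and $(x,a)$ and when a relation such as \eqref{rppa 1.3} reappears with its two slots swapped. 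I would present this as an explicit table, one row per triple, recording which equation each produces. I would also note that conditions (1) and (2) overlap with conditions (3) and (4) in their Zinbiel and pre-Lie halves, so the five conditions are not logically independent; this redundancy is retained only for legibility.
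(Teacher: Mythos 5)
Your proposal is correct and follows exactly the paper's route: the paper's entire proof is the one-line remark that the theorem is the special case $f=g=0$ of Proposition~\ref{5} (the unified product), which is precisely the reduction you carry out, with the bookkeeping — matching \eqref{a42}--\eqref{a44}, \eqref{a56}, \eqref{a57} and \eqref{a51}--\eqref{a53}, \eqref{a62}, \eqref{a63} to the representation axioms, \eqref{a45}--\eqref{a50}, \eqref{a58}--\eqref{a61} to \eqref{ppmp eq1.3}--\eqref{ppmp eq1.12}, and \eqref{a54}, \eqref{a55}, \eqref{a64}, \eqref{a65} to trivialities or the pre-Poisson axioms of $A_2$ — made explicit where the paper leaves it implicit. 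Your accounting checks out, so no gap.
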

\begin{proof} It is a special case of Theorem \ref{5}.
\end{proof}

{\bf Factorization problem}: Let $A$ and $B$ be two pre-Poisson algebras. Characterize and 
classify all pre-Poisson algebras $C$ that factorize through $A$ and $B$, 
i.e. $C$ contains $A$ and $B$ as pre-Poisson subalgebras such that $C=A+B$ and $A \cap B=\{0\}$.

In the light of Theorem \ref{11}, we easily get the following statement.
\begin{cor}A pre-Poisson algebra $C$ factorizes through two pre-Poisson algebras $A$ and $B$ if and only if there is a matched pair of pre-Poisson algebras $(A,B, l_{1},r_{1},l_{2},r_{2},\rho_1,\mu_1,\rho_2,\mu_2)$ such that $C \cong A \bowtie B$.
\end{cor}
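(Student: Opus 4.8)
The plan is to deduce both implications from Theorem~\ref{11} together with the description of bicrossed products given just before Theorem~\ref{thm:3.8}, so that no matched-pair axiom has to be checked by hand.

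For the ``only if'' direction I would start from a pre-Poisson algebra $C$ that factorizes through $A$ and $B$, so that $C = A \oplus B$ as vector spaces with $A$ and $B$ pre-Poisson subalgebras. Since $B$ is then a complement of $A$ in $C$, Theorem~\ref{11} furnishes a pre-Poisson extending structure $\Omega(A,B) = (l_1,r_1,l_2,r_2,f,\ast_2,\rho_1,\mu_1,\rho_2,\mu_2,g,\circ_2)$ of $A$ through $B$ and an isomorphism of pre-Poisson algebras $C \cong A \natural B$ stabilizing $A$ and co-stabilizing $B$. The key observation is that, in the construction underlying Theorem~\ref{11}, one has $f(x,y) = p(x \ast_C y)$ and $g(x,y) = p(x \circ_C y)$ for $x,y \in B$, where $p : C \to A$ is the projection along $B$; since $B$ is closed under $\ast_C$ and $\circ_C$, these values lie in $B = \ker p$, hence $f$ and $g$ are trivial, and moreover $x \ast_2 y = x \ast_C y$, $x \circ_2 y = x \circ_C y$, so $(B,\ast_2,\circ_2)$ is the given pre-Poisson algebra $B$. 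By the discussion preceding Theorem~\ref{thm:3.8}, a pre-Poisson extending structure with trivial $f$ and $g$ is exactly a matched pair $(A,B,l_1,r_1,l_2,r_2,\rho_1,\mu_1,\rho_2,\mu_2)$, and the associated unified product $A \natural B$ is the bicrossed product $A \bowtie B$. Therefore $C \cong A \bowtie B$ for this matched pair.

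For the converse, given a matched pair $(A,B,l_1,r_1,l_2,r_2,\rho_1,\mu_1,\rho_2,\mu_2)$, Theorem~\ref{thm:3.8} guarantees that $A \bowtie B$ is a pre-Poisson algebra. Reading off formulas \eqref{ppmp eq1.1}--\eqref{ppmp eq1.2}, the subspaces $A \times \{0\}$ and $\{0\} \times B$ are pre-Poisson subalgebras carrying the original structures of $A$ and $B$ respectively, their sum is all of $A \bowtie B$, and their intersection is $\{0\}$; thus $A \bowtie B$ factorizes through $A$ and $B$, and transporting the two subalgebras along the isomorphism $C \cong A \bowtie B$ shows the same for $C$.

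I expect the only nontrivial point to be the vanishing of $f$ and $g$ in the first direction: recognizing that the subalgebra hypothesis on $B$ is precisely what collapses the general unified product of Theorem~\ref{11} to a bicrossed product. Once this is in place, both directions are purely formal, and in particular no direct verification of conditions (1)--(5) of Theorem~\ref{thm:3.8} is required, since they are already subsumed in Theorem~\ref{11} (equivalently Proposition~\ref{5}).
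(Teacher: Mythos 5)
Your proposal is correct and follows essentially the same route as the paper, which simply invokes Theorem~\ref{11} (together with the discussion of bicrossed products) and leaves the details implicit. You have in fact supplied the one detail worth recording — that choosing $p$ to be the projection along $B$ forces $f=g=0$ and $\ast_2,\circ_2$ to coincide with the given operations on $B$ precisely because $B$ is a subalgebra — and the rest of your argument matches the intended one.
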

%%%%%%%%%%%%%%%%%%%%%%%%%%%%%%%%%%%%%%%%%%%%%%%%%%%%%%%%%%%%%%%%%%%%%%%%%%%%%%%%%%%%%%%%%%%%%%%%%%%%%%%%%%
\subsection{Flag extending structures}
In this section, we investigate the unified product $A\natural V$ when $dim(V ) = 1$.

\begin{defi}
	Let $(A,\ast_{1})$ be a Zinbiel algebra. A \textbf{flag datum} of $(A,\ast_{1})$ is a 6-tuple $(a_{1},k_{1},\tau,\omega$,
	$P,Q)$, where $a_{1}\in A,k_{1}\in k$, and $\tau,\omega:A \longrightarrow k$, $P,Q:A \longrightarrow A$ are linear maps satisfying the following compatibility conditions for $a,b\in A$,
	\begin{align*}
		&\tau(a\ast_{1} b)=\tau(a)\tau(b)-\tau(b\ast_{1} a), \ \ \ 
		\omega(a\ast_{1} b)=\tau(a)\omega(b),\\
		&\omega(a)\omega(b)=\omega(Q(a))=k_1\omega(a)=\tau(P(a)+Q(a))=0,\ \ \
		 P(a\ast_{1} b)=a\ast_{1}P(b)+\omega(b)Q(a),\\
		&P(a\ast_{1} b)=(P(a)+Q(a))\ast_{1}b+(\tau(a)+\omega(a))P(b), \ \ \ Q(a\ast_{1} b)=a\ast_{1}Q(b)+\tau(b)Q(a)-Q(b\ast_{1} a),\\
		&P^{2}(a)=2a_{1}\ast_{1}a+2k_{1}P(a)-\omega(a)a_{1},\ \ \
		Q^{2}(a)=P(Q(a))-Q(P(a))-\omega(a)a_{1},\\
		&P(Q(a))=a\ast_{1}a_{1}+k_{1}Q(a)-\tau(a)a_{1}, \ \ \ 
		P(a_{1})=2Q(a_{1})+k_{1}a_{1},\ \ \ 
		\omega(a_{1})=2\tau(a_{1})+k^{2}_{1}.
	\end{align*}
Denote by $\mathcal{FN}(A)$ the set of all flag datums of $(A,\ast_{1})$.
\end{defi}
\begin{pro}\label{71}
	Let $(A,\ast_{1})$ be a Zinbiel algebra and $V$ a vector space of dimension 1 with a basis $\{x\}$. 
Then there exists a bijection between the set $\mathcal{A}(A,V)$ of all extending structures of $A$ through $V$ 
and the set of all flag datums $\mathcal{FN}(A)$. 
The Zinbiel extending structure $\Omega(A,V)=(l_{1},r_{1},l_{2},r_{2},f,\ast_{2})$ corresponding to $(a_{1},k_{1},\tau,\omega,P,Q)\in\mathcal{FN}(A)$ is given by
	\begin{align*}
		&l_{1}(a)x=\tau(a)x,~r_{1}(a)x=\omega(a)x,~~l_{2}(x)a=P(a),~r_{2}(x)a=Q(a),~~f(x,x)=a_{1},~x\ast_{2}x=k_{1}x.
	\end{align*}
	The associated unified product $A\natural V$ is given by
	\begin{align*}
		(a,x)\ast(b,x):=(a\ast_{1}b+Q(a)+P(b)+a_{1},\tau(a)x+\omega(b)x+k_{1}x)
	\end{align*}
	and the Zinbiel algebra $A\natural {x}$ is generated by the following relations
	\begin{align*}
		x\ast x=a_{1}+k_{1}x,~a\ast x=Q(a)+\tau(a)x,~x\ast a=P(a)+\omega(a
		)x.
	\end{align*}
\end{pro}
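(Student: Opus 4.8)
The plan is to start from the observation that, because $V=kx$ is one-dimensional, a Zinbiel extending datum $\Omega(A,V)=(l_{1},r_{1},l_{2},r_{2},f,\ast_{2})$ of $(A,\ast_{1})$ through $V$ carries exactly the same amount of information as a $6$-tuple $(a_{1},k_{1},\tau,\omega,P,Q)$: the scalar-valued maps $l_{1}(a),r_{1}(a)\colon V\to V$ are recorded by linear functionals $\tau,\omega\colon A\to k$ via $l_{1}(a)x=\tau(a)x$, $r_{1}(a)x=\omega(a)x$; the maps $l_{2}(x),r_{2}(x)\colon A\to A$ are $P:=l_{2}(x)$ and $Q:=r_{2}(x)$; and $f(x,x)=a_{1}\in A$, $x\ast_{2}x=k_{1}x$ with $k_{1}\in k$. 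This already gives a bijection between Zinbiel extending data of $(A,\ast_{1})$ through $V$ and arbitrary $6$-tuples of this shape, visibly compatible with the formulas in the statement; what has to be proved is that $\Omega(A,V)$ is a Zinbiel extending \emph{structure} (i.e. satisfies \eqref{za1}--\eqref{za12} of Theorem \ref{22}) if and only if $(a_{1},k_{1},\tau,\omega,P,Q)$ satisfies the compatibility conditions of a flag datum.

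To do this I would specialize each of \eqref{za1}--\eqref{za12} to the basis vector $x$ (all the $V$-arguments must be taken equal to $x$ since $\dim V=1$) and read off the coefficient of $x$ on any $V$-valued side. The resulting dictionary is: \eqref{za1}, unfolded as the three Zinbiel-representation axioms for $(V,l_{1},r_{1})$, becomes $\tau(a\ast_{1}b)=\tau(a)\tau(b)-\tau(b\ast_{1}a)$, $\omega(a\ast_{1}b)=\tau(a)\omega(b)$ and $\omega(a)\omega(b)=0$; equations \eqref{za2}, \eqref{za3}, \eqref{za4} together yield $\omega(Q(a))=k_{1}\omega(a)=\tau(P(a)+Q(a))=0$; \eqref{za5} gives the relation for $Q(a\ast_{1}b)$; \eqref{za7} gives the relation for $P(a\ast_{1}b)$ in terms of $(P(a)+Q(a))\ast_{1}b$, and then \eqref{za6} gives the second relation $P(a\ast_{1}b)=a\ast_{1}P(b)+\omega(b)Q(a)$; \eqref{za9} and \eqref{za10} give the quadratic relations for $Q^{2}$ and $P^{2}$; \eqref{za8}, after substituting the $Q^{2}$-relation, gives $P(Q(a))=a\ast_{1}a_{1}+k_{1}Q(a)-\tau(a)a_{1}$; and \eqref{za11}, \eqref{za12} with all three arguments equal to $x$ give $P(a_{1})=2Q(a_{1})+k_{1}a_{1}$ and $\omega(a_{1})=2\tau(a_{1})+k_{1}^{2}$. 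Since each flag-datum relation is thereby produced by one of the conditions \eqref{za1}--\eqref{za12} (possibly after substituting an already-derived relation), and conversely each such condition becomes, assuming the previous ones, directly equivalent to the corresponding flag relation, this establishes both implications at once, hence the bijection between $\mathcal{A}(A,V)$ and $\mathcal{FN}(A)$. The explicit multiplication $\ast$ on $A\natural V$ and the generating relations $x\ast x$, $a\ast x$, $x\ast a$ are then obtained by substituting the above data into \eqref{4}.

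The main obstacle is purely organizational: the system \eqref{za1}--\eqref{za12} is long, and several flag relations --- notably $\omega(Q(a))=0$ and the ``clean'' forms of \eqref{za6} and \eqref{za8} --- appear only after one of the $za$-identities has been simplified using another, so one must keep track of the order in which relations are derived. One should also check that the reverse reading is non-circular: this holds because, once the relations coming from the earlier identities are in force, each remaining $za$-identity is equivalent to a single not-yet-used flag relation, with no feedback. Beyond this bookkeeping the verification presents no difficulty, and Theorem \ref{22} may be invoked throughout without further comment.
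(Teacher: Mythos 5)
Your proposal is correct, and it follows exactly the route the paper intends: the paper states Proposition \ref{71} without proof (remarking only that the $\tau=0$ case appears in \cite{5}), and the argument is precisely the specialization of Theorem \ref{22} to $\dim V=1$ that you carry out. I checked your dictionary between \eqref{za1}--\eqref{za12} and the flag-datum relations (including the points where one identity must be simplified using another, e.g.\ \eqref{za2}--\eqref{za4} jointly giving $\omega(Q(a))=k_{1}\omega(a)=\tau(P(a)+Q(a))=0$, and \eqref{za8} yielding the $P(Q(a))$ relation only after substituting the $Q^{2}$ relation from \eqref{za9}), and it is accurate in both directions.
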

The special case of the flag datum for Zinbiel algebras was studied in \cite{5}, where $\tau=0$.

\begin{defi}\cite{6}
	Let $(A,\circ_{1})$ be a pre-Lie algebra. A \textbf{flag datum} of $(A,\circ_{1})$ is a 6-tuple $(a_{2},k_{2},p,q,S,T)$, 
where $a_{2}\in A,k_{2}\in k$, and $p,q:A \longrightarrow k$, $S,T:A \longrightarrow A$ 
are linear maps satisfying the following compatibility conditions for $a,b\in A$,
	\begin{align*}
		&p(a\circ_{1} b)=p(b\circ_{1} a),\ \ \ \
		q(a\circ_{1}b)=q(a)q(b),\\
		&S(a\circ_{1} b)=S(a)\circ_{1}b+a\circ_{1}S(b)+(q(a)-p(a))S(b)+q(b)T(a)-T(a)\circ_{1}b,\\
		&T(a\circ_{1}b)-T(b\circ_{1}a)=p(b)T(a)-p(a)T(b)+a\circ_{1}T(b)-b\circ_{1}T(a),\\
		&T^{2}(a)=T(S(a))-S(T(a))+a\circ_{1}a_{2}+(q(a)-2p(a))a_{2}+k_{2} T(a),\\
		&p(S(a))-p(T(a))=q(T(a))+k_{2}(p(a)-q(a)).
	\end{align*}
	Denote by $\mathcal{FL}(A)$ the set of all flag datums of $(A,\circ_{1})$.
\end{defi}
\begin{pro}\cite{6}\label{70}
	Let $(A,\circ_{1})$ be a pre-Lie algebra and $V$ a vector space of dimension 1 with a basis $\{x\}$. 
Then there exists a bijection between the set $\mathcal{A}(A,V)$ of all extending structures of $A$ through $V$ 
and the set of all flag datums $\mathcal{FL}(A)$. The pre-Lie extending structure 
$\Omega(A,V)=(\rho_1,\mu_1,\rho_2,\mu_2,g,\circ_{2})$ corresponding to $(a_{2},k_{2},p,q,S,T)\in\mathcal{FL}(A)$ is given by
	\begin{align*}
		\rho_1(a)x=p(a)x,~\mu_1(a)x=q(a)x,~~\rho_2(x)a=S(a),~\mu_2(x)a=T(a),~~g(x,x)=a_{2},~x\circ_{2} x=k_{2} x.
	\end{align*}
	The associated unified product $A\natural V$ is given by
	\begin{align*}
		(a,x)\circ(b,x):=(a\circ_{1}b+T(a)+S(b)+a_{2},p(a)x+q(b)x+k_{2} x)
	\end{align*}
	and the pre-Lie algebra $A\natural {x}$ is generated by the following relations
	\begin{align*}
		x\circ x=a_{2}+k_{2} x,~a\circ x=T(a)+p(a)x,~x\circ a=S(a)+q(a)x.
	\end{align*}
\end{pro}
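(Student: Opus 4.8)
The plan is to specialise Theorem~\ref{21} to the case $\dim V=1$. Fix the basis $\{x\}$ of $V$ and use the canonical isomorphism $\mathrm{End}(V)\cong k$. Under it, a pre-Lie extending datum $\Omega(A,V)=(\rho_1,\mu_1,\rho_2,\mu_2,g,\circ_2)$ of $(A,\circ_1)$ through $V$ is nothing but a $6$-tuple $(a_2,k_2,p,q,S,T)$, where $a_2\in A$, $k_2\in k$, the maps $p,q\colon A\to k$ are linear forms and $S,T\colon A\to A$ are linear maps, the correspondence being $\rho_1(a)x=p(a)x$, $\mu_1(a)x=q(a)x$, $\rho_2(x)=S$, $\mu_2(x)=T$, $g(x,x)=a_2$, $x\circ_2 x=k_2x$; this assignment is visibly a bijection between the set of pre-Lie extending data of $A$ through $V$ and the set of such $6$-tuples. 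Hence it suffices to show that, under this identification, the nine conditions \eqref{pra1}--\eqref{pra9} characterising a pre-Lie extending structure in Theorem~\ref{21} become exactly the defining relations of $\mathcal{FL}(A)$.

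The next step is to evaluate each of \eqref{pra1}--\eqref{pra9} by substituting the basis vector $x$ for every $V$-variable (by multilinearity it is enough to check this single case). The four conditions \eqref{pra3}, \eqref{pra6}, \eqref{pra8}, \eqref{pra9} are each antisymmetric in one pair of the $V$-slots, so upon equating those slots both sides coincide and the condition reduces to a trivial identity; thus in dimension one they impose nothing. Condition \eqref{pra1}, that $(V,\rho_1,\mu_1)$ is a bimodule of $(A,\circ_1)$, splits into its two defining axioms, which in scalar form collapse to $p(a\circ_1 b)=p(b\circ_1 a)$ and $q(a\circ_1 b)=q(a)q(b)$ respectively. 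Condition \eqref{pra4} produces the stated relation for $S(a\circ_1 b)$, with the term $\mu_1(a)x-\rho_1(a)x=(q(a)-p(a))x$ contributing the coefficient $q(a)-p(a)$ and $\mu_2(\mu_1(b)x)a=q(b)T(a)$ contributing the last term; \eqref{pra5} produces the relation for $T(a\circ_1 b)-T(b\circ_1 a)$; \eqref{pra2}, after cancelling the common summand $k_2p(a)x$ from both sides, produces $p(S(a))-p(T(a))=q(T(a))+k_2(p(a)-q(a))$; and \eqref{pra7}, after isolating the term $\mu_2(x)(\mu_2(x)a)=T^2(a)$, produces the stated relation for $T^2(a)$. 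Collecting all of these, $\Omega(A,V)$ is a pre-Lie extending structure precisely when $(a_2,k_2,p,q,S,T)\in\mathcal{FL}(A)$, which is the claimed bijection. Finally, the formula for the unified product $A\natural V$ and the generating relations of the pre-Lie algebra $A\natural\{x\}$ follow by substituting $\rho_1(a)x=p(a)x$, $\mu_1(b)x=q(b)x$, $\rho_2(x)b=S(b)$, $\mu_2(x)a=T(a)$, $g(x,x)=a_2$, $x\circ_2 x=k_2x$ directly into the defining formula \eqref{3} for $\circ$ on $A\oplus V$.

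The only delicate part is the bookkeeping in the second paragraph: one must keep track of which of $S,T$ comes from each occurrence of $\rho_2(x)$ versus $\mu_2(x)$, watch the signs in the combinations $\rho_2(x)a-\mu_2(x)a$ and $\mu_1(a)x-\rho_1(a)x$ that occur in \eqref{pra2}, \eqref{pra4} and \eqref{pra7}, and check that the two axioms hidden inside \eqref{pra1} reduce to exactly those two scalar conditions and nothing weaker. There is no conceptual obstacle; the argument is a direct, if somewhat lengthy, specialisation of Theorem~\ref{21}, which is why the statement is quoted from \cite{6}.
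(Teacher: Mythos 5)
Your proposal is correct: the paper quotes Proposition~\ref{70} from \cite{6} without proof, and your argument is exactly the direct specialization of Theorem~\ref{21} to $\dim V=1$ that the citation relies on. In particular, your identification of which conditions become vacuous (\eqref{pra3}, \eqref{pra6}, \eqref{pra8}, \eqref{pra9}, by symmetry of the $V$-slots) and which produce the six defining relations of $\mathcal{FL}(A)$ (the two bimodule axioms from \eqref{pra1}, plus \eqref{pra4}, \eqref{pra5}, \eqref{pra7}, \eqref{pra2}) checks out.
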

\begin{defi}
	Let $(A,\ast_{1},\circ_{1})$ be a pre-Poisson algebra. 
A \textbf{flag datum} of $(A,\ast_{1},\circ_{1})$ is a 12-tuple
	 $(a_{1},k_{1},\tau,\omega,P,Q,a_{2},k_{2},p,q,S,T)$, where $a_{1},a_{2}\in A$, $k_{1},k_{2}\in k$, 
and $\tau,\omega,p,q:A \longrightarrow k$, $P,Q,S,T:A \longrightarrow A$ 
are linear maps satisfying the following compatibility conditions for $a,b\in A$,
	 \begin{align}
	 	&\text{$(a_{1},k_{1},\tau,\omega,P,Q)$ is a flag datum of the Zinbiel algebra $(A,\ast_{1})$,}\label{73}\\
	 	&\text{$(a_{2},k_{2},p,q,S,T)$is a flag datum of the pre-Lie algebra $(A,\circ_{1})$,}\\
	 	&\tau(a\circ_{1}b)=\tau(b\circ_{1}a),\\
	 	&\omega(b)q(a)=\omega(a\circ_{1}b),\\
	 	&\omega(b)(q(a)-p(a))=q(a\ast_{1}b)-\tau(a)q(b),\\
	 	&(q(a)-p(a))k_{1}+\tau(S(a)-T(a))=\omega(T(a))=q(Q(a)),\\
	 	&\omega(a)k_{2}+q(P(a))-q(a)k_{1}=0,\\
	 	&Q(a\circ_{1} b-b\circ_{1} a)=a\circ_{1}Q(b)+\tau(b)T(a)-b\ast_{1}T(a)-p(a)Q(b),\\
	 	&(T(a)-S(a))\ast_{1}b+(p(a)-q(a))P(b)=a\circ_{1}P(b)+\omega(b)T(a)-P(a\circ_{1}b),\\
	 	&(S(a)-T(a))\ast_{1}b+(q(a)-p(a))P(b)=S(a\ast_{1}b)-a\ast_{1}S(b)-q(b)Q(a),\\
	 	&Q(T(a)-S(a))+a_{1}(2p(a)-q(a))=a\circ_{1}a_{1}+k_{1} T(a)-P(T(a)),\\
	 	&Q(S(a)-T(a))+a_{1}(q(a)-p(a))=S(Q(a))+a_{2}\tau(a)-a\ast_{1}a_{2}-k_{2} Q(a),\\
	 	&S(P(a))-P(S(a))+a_{2}\omega(a)-a_{1}q(a)=0,\\
	 	&S(a_{1})+k_{1}a_{2}-P(a_{2})-k_{2} a_{1}=0,\\
	 	&p(a_{1})=q(a_{1})=\omega(a_{2}),\\
	 	&p(a\ast_{1}b+b\ast_{1}a)=\tau(a)p(b)+\tau(b)p(a),\\
	 	&q(b)(\tau(a)+\omega(a))=\tau(a)q(b)+\omega(a\circ_{1}b),\\
	 	&k_{2}\omega(a)+p(P(a)+Q(a))=k_{1}p(a)+\omega(\tau(a)),\\
	 	&\omega(S(a))=0,\\
	 	&T(a\ast_{1}b+b\ast_{1}a)=a\ast_{1}T(b)+b\ast_{1}T(a)+p(b)Q(a)+p(a)Q(b),\\
	 	&(Q(a)+P(a))\circ_{1}b+(\tau(a)+\omega(a))S(b)=a\ast_{1}S(b)+P(a\circ_{1}b)+q(b)Q(a),\\
	 	&T(Q(a)+P(a))+a_{2}(\tau(a)+\omega(a))=a\ast_{1}a_{2}+k_{2} Q(a)+P(T(a))+a_1p(a),\\
	 	&a_{1}\circ_{1}a+k_{1}S(a)=P(S(a))+a_{1}q(a),\\
	 	&T(a_{1})+k_{1}a_{2}=P(a_{2})+k_{2} a_{1}.\label{74}
	 \end{align}
	 Denote the set of all flag datums of $(A,\ast_{1},\circ_{1})$ by $\mathcal{F}(A)$.
	\end{defi}
\begin{pro}\label{82}
	Let $(A,\ast_{1},\circ_{1})$ be a pre-Poisson algebra and $V$ a vector space of dimension 1 
with a basis $\{x\}$. Then there exists a bijection between the set $\mathcal{A}(A,V)$ of all 
extending structures of $A$ through $V$ and the set of all flag datums
$\mathcal{F}(A)$. The pre-Poisson extending structure 
$\Omega(A,V)=(l_{1},r_{1},l_{2},r_{2},f,\ast_{2},\rho_1,\mu_1,\rho_2,\mu_2,g,\circ_{2})$ corresponding to $(a_{1},k_{1},\tau,\omega,P,Q,a_{2},k_{2},p,q$,
$S,T)\in\mathcal{F}(A)$ is given by
	\begin{align}
		&l_{1}(a)x=\tau(a)x,~r_{1}(a)x=\omega(a)x,\\
		&l_{2}(x)a=P(a),~r_{2}(x)a=Q(a),\\
		&f(x,x)=a_{1},~x\ast_{2}x=k_{1}x,\\
		&\rho_1(a)x=p(a)x,~\mu_1(a)x=q(a)x,\\
		&\rho_2(x)a=S(a),~\mu_2(x)a=T(a),\\
		&g(x,x)=a_{2},~x\circ_{2} x=k_{2} x.
	\end{align}
The corresponding unified product $A\natural V$ is given by
\begin{align}
&(a,x)\ast(b,x):=(a\ast_{1}b+Q(a)+P(b)+a_{1},\tau(a)x+\omega(b)x+k_{1}x),\\
&(a,x)\circ(b,x):=(a\circ_{1}b+T(a)+S(b)+a_{2},p(a)x+q(b)x+k_{2} x),
\end{align}
and the pre-Poisson algebra $A\natural {x}$ is generated by the following relations
\begin{align}
&x\ast x=a_{1}+k_{1}x,~a\ast x=Q(a)+\tau(a)x,~x\ast a=P(a)+\omega(a
)x,\\
&x\circ x=a_{2}+k_{2} x,~a\circ x=T(a)+p(a)x,~x\circ a=S(a)+q(a)x,
\end{align}
for all $a\in A$.
\begin{proof}
	Assume that $\Omega(A,V)=(l_{1},r_{1},l_{2},r_{2},f,\ast_{2},\rho_1,\mu_1,\rho_2,\mu_2,g,\circ_{2})$
is a pre-Poisson extending structure. Thanks to $dim_{k}(V)=1$, we can put
	 \begin{align*}
	 	&x\ast x=a_{1}+k_{1}x,~a\ast x=Q(a)+\tau(a)x,~x\ast a=P(a)+\omega(a
	 	)x,\\
	 	&x\circ x=a_{2}+k_{2} x,~a\circ x=T(a)+p(a)x,~x\circ a=S(a)+q(a)x,
	 \end{align*}
	 where $a_{1},a_{2}\in A$, $k_{1},k_{2}\in k$, $\tau:A\longrightarrow k$, $\omega:A\longrightarrow k$, 
$p:A\longrightarrow k$ and $q:A\longrightarrow k$, $P,Q,S$ and $T:A\longrightarrow A$ are linear maps. 
By Proposition \ref{71}, Proposition \ref{70} and a computation, we obtain that $\Omega(A,V)=(l_{1},r_{1},l_{2},r_{2},f,\ast_{2},\rho_1,\mu_1,\rho_2,\mu_2,g,\circ_{2})$ 
 is a pre-Poisson extending structure if and only if Eqs. \eqref{73}-\eqref{74} hold. The proof is finished.
\end{proof}
\end{pro}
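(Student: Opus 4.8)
The plan is to use $\dim_k V=1$ to collapse all the data of an extending structure to scalars, linear functionals on $A$ and endomorphisms of $A$, and then to split the pre-Poisson condition into three already-understood pieces via Proposition \ref{5}. Fix the basis $\{x\}$ of $V$. A pre-Poisson extending structure $\Omega(A,V)=(l_{1},r_{1},l_{2},r_{2},f,\ast_{2},\rho_1,\mu_1,\rho_2,\mu_2,g,\circ_{2})$ is entirely determined by the values of its maps on $x$, so one may write $l_1(a)x=\tau(a)x$, $r_1(a)x=\omega(a)x$, $\rho_1(a)x=p(a)x$, $\mu_1(a)x=q(a)x$ with $\tau,\omega,p,q:A\to k$ linear; $l_2(x)a=P(a)$, $r_2(x)a=Q(a)$, $\rho_2(x)a=S(a)$, $\mu_2(x)a=T(a)$ with $P,Q,S,T:A\to A$ linear; and $f(x,x)=a_1$, $g(x,x)=a_2$, $x\ast_2x=k_1x$, $x\circ_2x=k_2x$ with $a_1,a_2\in A$, $k_1,k_2\in k$. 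This assignment is visibly a bijection from extending data onto $12$-tuples $(a_1,k_1,\tau,\omega,P,Q,a_2,k_2,p,q,S,T)$, and substituting these expressions into \eqref{4} and \eqref{3} yields at once the stated formulas for $\ast,\circ$ on $A\natural V$ and for the generating relations of $A\natural\{x\}$.

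Next I would invoke Proposition \ref{5}: $\Omega(A,V)$ is a pre-Poisson extending structure precisely when $(l_1,r_1,l_2,r_2,f,\ast_2)$ is a Zinbiel extending structure of $(A,\ast_1)$ through $V$, $(\rho_1,\mu_1,\rho_2,\mu_2,g,\circ_2)$ is a pre-Lie extending structure of $(A,\circ_1)$ through $V$, and the mixed identities \eqref{a42}--\eqref{a65} hold. Proposition \ref{71}, applied in the one-dimensional case, identifies the first clause with $(a_1,k_1,\tau,\omega,P,Q)\in\mathcal{FN}(A)$, i.e.\ with \eqref{73}; Proposition \ref{70} identifies the second with $(a_2,k_2,p,q,S,T)\in\mathcal{FL}(A)$. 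So the whole argument reduces to showing that, under the dictionary above, the family \eqref{a42}--\eqref{a65} is equivalent to the remaining flag-datum identities in the definition of $\mathcal{F}(A)$.

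For that I would evaluate each of \eqref{a42}--\eqref{a65} on tuples of basis vectors. A relation with one $V$-slot and the other arguments in $A$ becomes, after separating the $A$-component and the $V$-component of the identity, one identity of maps $A\to A$ and one of functionals $A\to k$; a relation with two or three $V$-slots is evaluated on $x$ (respectively on the pair $(x,x)$), again splitting into an $A$-part and a $k$-part. Since $V$ is one-dimensional each such evaluation is forced, so every mixed relation contributes exactly the one or two scalar/endomorphism identities that appear among \eqref{73}--\eqref{74}, and conversely those identities reconstitute \eqref{a42}--\eqref{a65}. This line-by-line matching — keeping straight which component of which mixed relation produces which line of the flag-datum list — is the only substantive work and the main source of possible slips; everything else is formal. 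Assembling the three equivalences produces the asserted bijection $\mathcal{A}(A,V)\leftrightarrow\mathcal{F}(A)$.
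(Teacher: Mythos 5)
Your proposal is correct and follows essentially the same route as the paper: use $\dim_k V=1$ to encode the extending datum as the $12$-tuple, invoke Propositions \ref{71} and \ref{70} for the Zinbiel and pre-Lie parts, and reduce the remaining work to translating the mixed compatibility conditions of Proposition \ref{5} into the flag-datum identities \eqref{73}--\eqref{74} by componentwise evaluation on $x$. The paper likewise leaves that final translation as ``a computation,'' so your level of detail matches its proof.
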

\begin{thm}
	Let $(A,\ast_{1},\circ_{1})$ be a pre-Poisson algebra of codimension 1 in a vector space $E$ and $V$ a 
complement of $A$ in $E$ with basis $\{x\}$. 
Then there is a bijection
	\begin{align}
		Extd(E, A) \cong H^2_{P}(V, A) \cong \mathcal{F}(A)/ \equiv,
	\end{align}
	where $\equiv$ is an equivalent relation on the set $\mathcal{F}(A)$ of all pre-Poisson flag datums of $(A,\ast_{1},\circ_{1})$ defined as follows: $(a_{1},k_{1},\tau,\omega,P,Q,a_{2},k_{2},p,q,S,T) \equiv(a'_{1},k'_{1},\tau',\omega',P',Q',a'_{2},k_{2}',p',q',S',T')$ 
if and only if $\tau=\tau',\omega=\omega',p=p',q=q'$ and there is a pair $(\delta,\epsilon) \in A\times k$ 
satisfying the following conditions,
	\begin{align*}
		&\tau(a)\delta=a\ast_{1}\delta-Q(a)+Q'(a)\epsilon,\\
		&\omega(a)\delta=\delta\ast_{1}a-P(a)+P'(a)\epsilon,\\
		&k_{1}=k'_{1}\epsilon+\tau'(\delta)+\omega'(\delta),\\
		&k_{1}\delta=\delta\ast_{1}\delta+P'(\delta)\epsilon+Q'(\delta)\epsilon+a'_{1}\epsilon^{2}-a_{1},\\
		&p(a)\delta=a\circ_{1}\delta-T(a)+T'(a)\epsilon,\\
		&q(a)\delta=\delta\circ_{1}a-S(a)+S'(a)\epsilon,\\
		&k_{2}=k_{2}'\epsilon+p'(\delta)+q'(\delta),\\
		&k_{2}\delta=\delta\circ_{1}\delta+S'(\delta)\epsilon+T'(\delta)\epsilon+a'_{2}\epsilon^{2}-a_{2},
	\end{align*}
	for all $a\in A$. The bijection between $\mathcal{F}(A)/ \equiv$ and $Extd(E, A)$ is given by
	\begin{align*}
		\overline{(a_{1},k_{1},\tau,\omega,P,Q,a_{2},k_{2},p,q,S,T)} \longmapsto A \natural_{(a_{1},k_{1},\tau,\omega,P,Q,a_{2},k_{2},p,q,S,T)} V,
	\end{align*}  
	where $\overline{(a_{1},k_{1},\tau,\omega,P,Q,a_{2},k_{2},p,q,S,T)}$ is 
the equivalent class of $(a_{1},k_{1},\tau,\omega,P,Q,a_{2},k_{2},p$,
$q,S,T)$ under $\equiv$.
\end{thm}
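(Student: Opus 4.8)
The plan is to obtain the statement by composing three facts already established: Theorem~\ref{13} gives the bijection $Extd(E,A)\cong H^2_{p}(V,A)=\mathcal{A}(A,V)/\equiv$; Proposition~\ref{82} gives, for $\dim_k V=1$, an explicit bijection $\mathcal{A}(A,V)\cong\mathcal{F}(A)$ between pre-Poisson extending structures of $A$ through $V$ and flag datums; and Lemma~\ref{63} together with Definition~\ref{01} describe $\equiv$ on $\mathcal{A}(A,V)$ via a pair $(\zeta,\eta)$, $\zeta:V\to A$, $\eta:V\to V$ a bijection, satisfying Eqs.~\eqref{90}--\eqref{99}. Thus the task reduces to transporting $\equiv$ along the bijection of Proposition~\ref{82} and writing the resulting conditions on flag datums explicitly.

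First I would fix the basis $\{x\}$ of $V$: then $\eta$ is multiplication by a scalar $\epsilon\in k$, bijective exactly when $\epsilon\neq0$, and $\zeta$ is determined by $\delta:=\zeta(x)\in A$; so a pair $(\zeta,\eta)$ as in Definition~\ref{01} is the same as a pair $(\delta,\epsilon)\in A\times k$, with $\epsilon\neq0$ encoding the bijectivity of $\eta$. Next I would substitute the parametrization of Proposition~\ref{82} --- namely $l_1(a)x=\tau(a)x$, $r_1(a)x=\omega(a)x$, $l_2(x)a=P(a)$, $r_2(x)a=Q(a)$, $f(x,x)=a_1$, $x\ast_2x=k_1x$, and the pre-Lie counterparts $\rho_1(a)x=p(a)x$, $\mu_1(a)x=q(a)x$, $\rho_2(x)a=S(a)$, $\mu_2(x)a=T(a)$, $g(x,x)=a_2$, $x\circ_2x=k_2x$ --- into Eqs.~\eqref{90}--\eqref{99}, evaluating each identity on the basis vector $x$ (and on the pair $(x,x)$ wherever a bilinear slot occurs) and pulling the scalar $\epsilon$ through the entries $\ast_2$, $\circ_2$, $l_i$, $\rho_i$, and so on.

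A direct computation then shows that Eq.~\eqref{90} forces $\tau=\tau'$ and $\omega=\omega'$; Eq.~\eqref{95} forces $p=p'$ and $q=q'$; Eqs.~\eqref{91}, \eqref{92}, \eqref{96}, \eqref{97} become respectively $\tau(a)\delta=a\ast_1\delta-Q(a)+Q'(a)\epsilon$, $\omega(a)\delta=\delta\ast_1a-P(a)+P'(a)\epsilon$, $p(a)\delta=a\circ_1\delta-T(a)+T'(a)\epsilon$, $q(a)\delta=\delta\circ_1a-S(a)+S'(a)\epsilon$; Eqs.~\eqref{93} and \eqref{98} give $k_1=k_1'\epsilon+\tau'(\delta)+\omega'(\delta)$ and $k_2=k_2'\epsilon+p'(\delta)+q'(\delta)$; and Eqs.~\eqref{94} and \eqref{99} give $k_1\delta=\delta\ast_1\delta+P'(\delta)\epsilon+Q'(\delta)\epsilon+a_1'\epsilon^2-a_1$ and $k_2\delta=\delta\circ_1\delta+S'(\delta)\epsilon+T'(\delta)\epsilon+a_2'\epsilon^2-a_2$. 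These are exactly the eight conditions defining $\equiv$ on $\mathcal{F}(A)$ in the statement, so the bijection of Proposition~\ref{82} descends to a bijection $\mathcal{A}(A,V)/\equiv\;\cong\;\mathcal{F}(A)/\equiv$; composing with Theorem~\ref{13} yields the claimed chain of bijections, and the explicit assignment $\overline{(a_1,k_1,\dots,T)}\mapsto A\natural_{(a_1,k_1,\dots,T)}V$ is read off from the constructions in Theorem~\ref{13} and Proposition~\ref{82}. That $\equiv$ on $\mathcal{F}(A)$ is an equivalence relation is then inherited from the corresponding fact on $\mathcal{A}(A,V)$.

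The only real obstacle I anticipate is bookkeeping: keeping track of the ten identities \eqref{90}--\eqref{99}, the scalar factors produced by the one-dimensional reduction, and checking that no constraint beyond the eight listed is generated --- several of the ten identities collapse to the same relation (for instance the $\tau,\omega,p,q$-equalities reappear from more than one slot). None of this is conceptually difficult, but it must be carried out carefully so that the translated relation matches the one in the statement exactly.
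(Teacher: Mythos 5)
Your proposal is correct and follows essentially the same route as the paper: the paper's own proof likewise sets $\zeta(x)=\delta$, $\eta(x)=\epsilon x$ and invokes Lemma \ref{63}, Theorem \ref{13} and Proposition \ref{82}, leaving the translation of Eqs.~\eqref{90}--\eqref{99} into the eight listed conditions as a direct computation. Your write-up actually carries out more of that translation explicitly (including the observation that bijectivity of $\eta$ is exactly $\epsilon\neq 0$) than the paper does.
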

\begin{proof}
	Let $(a_{1},k_{1},\tau,\omega,P,Q,a_{2},k_{2},p,q,S,T),~(a'_{1},k'_{1},\tau',\omega',P',Q',a'_{2},k_{2}',p',q',S',T')\in\mathcal{F}(A)$ 
and $\Omega(A,V),~\Omega'(A,V)$ be the corresponding pre-Poisson extending structures respectively.
Due to the fact that $dim(V)=1$, we can assume that $x$ is a basis of $V$. Then we can set $\zeta,\eta$ in Lemma \ref{63} as follows
	\begin{align*}
		\zeta(x)=\delta,~\eta(x)=\epsilon x,
	\end{align*}
	where $\delta\in A$ and $\epsilon\in k$. Then the conclusion follows directly from Lemma \ref{63}, Theorem \ref{13} and Proposition \ref{82}.
\end{proof}

%%%%%%%%%%%%%%%%%%%%%%%%%%%%%%%%%%%%%%%%%%%%%%%%%%%%%%%%%%%%%%%%%%%%%%%%%%%%%%%%%%%%%%%%%%%%%%%%%%%%%%%%%%
\section{Pre-Poisson bialgebras and Yang-Baxter equations in pre-Poisson algebras}

In this section, we investigate a class of special matched pairs of pre-Poisson algebras. 
We introduce the notions of pre-Poisson bialgebras and quadratic pre-Poisson algebras. Furthermore, we establish the equivalence among
 pre-Poisson bialgebras, quadratic pre-Poisson algebras
 and certain matched pairs of pre-Poisson algebras. We study
coboundary pre-Poisson bialgebras and our study leads to the pre-Poisson Yang-Baxter equation (PPYBE) in  pre-Poisson algebras,
which is an analogue of the classical Yang-Baxter equation. A symmetric solution of
PPYBE gives a pre-Poisson bialgebra.

\subsection{Zinbiel bialgebras and pre-Lie bialgebras}
	We begin with recalling the bialgebra theories for Zinbiel algebras and pre-Lie bialgebras \cite{01}.
Since a Zinbiel algebra is a special dendriform algebra, one can turn to \cite{02} for the Zinbiel bialgebra theory.  
 
\begin{defi} 
	\begin{enumerate}
		\item 	A {\bf Zinbiel coalgebra} is a vector space $A$ together
		with a linear map $\Delta: A \longrightarrow A\otimes A$, called the
		coproduct, such that
		\begin{equation}\label{zca eq1.1}(\Delta\otimes I)\Delta+(\tau\otimes I)(\Delta\otimes I)\Delta=(I\otimes \Delta)\Delta.\end{equation}
		\item A {\bf Zinbiel bialgebra} is a triple $(A,\ast,
		\Delta)$, where $(A,\ast)$ is a Zinbiel algebra and $(A,
		\Delta)$ is a Zinbiel coalgebra such that for all $x,y\in A$,
	 the following compatible conditions hold:
		\begin{align}\label{zba eq1.1}\Delta(y\ast z)+\tau\Delta(y\ast z)
			&=(I\otimes L_{\ast}(y))\Delta(z)+\tau(L_{\ast}(y)\otimes
			I)\Delta(z) \\&=(I\otimes R_{\ast}(z))\Delta(y)+
			(L_{\ast}(y)\otimes I)\Delta(z)+ \tau (I\otimes
			L_{\ast}(y))\Delta(z)\nonumber,\end{align}
		\begin{align}\label{zba eq1.2}&\Delta (x\ast y+y \ast x)
			= (L_{\ast}(x)\otimes I)\Delta(y)+(I\otimes
			L_{\star}(y))\Delta(x),\end{align} where
		$x\star y=x\ast y+y\ast x$
		and $\tau(x\otimes y)=y\otimes x$ for all $x,y\in A$.
	\end{enumerate}
\end{defi}
 
 Recall that a {\bf quadratic Zinbiel algebra} $(A,\ast,\omega)$ is a Zinbiel algebra together with a nondegenerate antisymmetric bilinear form $\omega$
such that the following equation holds:
\begin{equation}\label {ef1}\omega( x\ast y,z)=\omega(y,x\star z),
\forall x,y,z\in A.
\end{equation} 

\begin{thm} \label{Zba} Let $(A, \ast_1)$ be a Zinbiel algebra
equipped with a comultiplication $\Delta:A\longrightarrow A\otimes
A$. Suppose that $\Delta^{*}:A^{*}\otimes A^{*}\longrightarrow
A^{*}$ induces a Zinbiel algebra on $A^{*}$. Put
$\ast_2=\Delta^{*}$. Then the following conditions are equivalent:
\begin{enumerate}
	\item There is a quadratic Zinbiel algebra $(A \oplus A^{*},\ast,\omega)$ such that 
	$(A, \ast_1)$ and $(A^{*}, \ast_2)$ are Zinbiel subalgebras, where the bilinear 
	form $\omega$ on $A \oplus A^{*}$ is given by
	\begin{equation*}\omega(x + a, y + b)= \langle x,b\rangle -\langle a,y\rangle,~\forall~x,y\in A,a,b\in A^{*}.
	\end{equation*}
\item $(\mathcal{C}(A),\mathcal{C}(A^{*}),-L_{\ast_1}^{*},-L_{\ast_2}^{*})$ is
a matched pair of commutative associative algebras.
\item $((A, \ast_1),( A^{*},\ast_{2}), -L_{\star_1}^{*}, R_{\ast_1}^{*}, -L_{\star_2}^{*},
R_{\ast_2}^{*}) $ is a matched pair of Zinbiel algebras.
\item $(A, \ast_{1},\Delta)$ is a Zinbiel bialgebra.
\end{enumerate}
\end{thm}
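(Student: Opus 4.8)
The plan is to tie all four conditions to the matched pair in $(3)$ by proving $(1)\Leftrightarrow(3)$, $(3)\Leftrightarrow(4)$ and $(2)\Leftrightarrow(3)$. Throughout I will use the standing hypothesis: saying that $\Delta^{*}$ induces a Zinbiel algebra $(A^{*},\ast_2)$ is exactly saying that $(A,\Delta)$ is a Zinbiel coalgebra in the sense of \eqref{zca eq1.1}, so under this hypothesis condition $(4)$ reduces to verifying the compatibilities \eqref{zba eq1.1}--\eqref{zba eq1.2}. The whole argument is the Zinbiel analogue of the corresponding statement for (anti)symmetric infinitesimal bialgebras and dendriform bialgebras, so large portions can be referred to \cite{02,04}.

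For $(1)\Leftrightarrow(3)$: if $(A\oplus A^{*},\ast,\omega)$ is a quadratic Zinbiel algebra with $A$ and $A^{*}$ as subalgebras, then because $\omega$ is nondegenerate and satisfies $\omega(u\ast v,w)=\omega(v,u\star w)$ from \eqref{ef1}, the mixed products $x\ast a$ and $a\ast x$ for $x\in A,a\in A^{*}$ are uniquely determined; pairing them against elements of $A$ and $A^{*}$ and using the explicit hyperbolic form of $\omega$ forces them to be precisely the coadjoint-type actions, i.e. $A$ acts on $A^{*}$ by $(-L_{\star_1}^{*},R_{\ast_1}^{*})$ and $A^{*}$ acts on $A$ by $(-L_{\star_2}^{*},R_{\ast_2}^{*})$ (the dual representations of Proposition \ref{zr}). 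Thus $(A\oplus A^{*},\ast)$ is the bicrossed product $A\bowtie A^{*}$ for exactly these representations, and by Theorem \ref{22} with $f$ trivial (equivalently, by decomposing the Zinbiel axiom according to how many arguments lie in $A^{*}$) this is equivalent to the list of matched pair conditions of $(3)$. Conversely, starting from the matched pair in $(3)$, the bicrossed product $A\bowtie A^{*}$ is a Zinbiel algebra and a direct computation with the pairing shows the hyperbolic $\omega$ satisfies \eqref{ef1}.

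For $(3)\Leftrightarrow(4)$: put $\ast_2=\Delta^{*}$ and dualize every matched pair condition. The conditions involving only $A$ hold automatically; the conditions relating the action of $A^{*}$ on $A$ with the action of $A$ on $A^{*}$, after taking transposes and pairing with test vectors, become exactly the compatibilities \eqref{zba eq1.1}--\eqref{zba eq1.2}, where the appearance of $\tau$ records which side a $\Delta$-leg lands on and of $L_{\star}$ versus $R_{\ast}$ records which of the two dual actions is used. This bookkeeping — systematically applying $\langle f^{*}(x)u^{*},v\rangle=-\langle u^{*},f(x)v\rangle$ and keeping track of the flip $\tau$ and of signs — is the step I expect to be the most delicate, even though each individual identity is a short pairing computation. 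For $(2)\Leftrightarrow(3)$: by Proposition \ref{zr}(2), $l+r$ is a representation of the associated commutative associative algebra whenever $(l,r)$ is one of the Zinbiel algebra, and for the coadjoint actions $-L_{\star_1}^{*}+R_{\ast_1}^{*}=-L_{\ast_1}^{*}$; summing the mixed Zinbiel matched pair conditions of $(3)$ and using the Zinbiel axioms yields precisely the matched pair conditions for $(\mathcal{C}(A),\mathcal{C}(A^{*}),-L_{\ast_1}^{*},-L_{\ast_2}^{*})$, and conversely one recovers the Zinbiel matched pair conditions from these together with the Zinbiel structures on $A$ and $A^{*}$. Alternatively this equivalence can be routed through $(1)$: restricting the quadratic Zinbiel algebra $(A\oplus A^{*},\ast,\omega)$ to anticommutators gives a commutative associative algebra on $\mathcal{C}(A)\oplus\mathcal{C}(A^{*})$ on which $\omega$ is a Connes cocycle, since \eqref{ef1} and antisymmetry give $\omega(u\star v,w)+\omega(v\star w,u)+\omega(w\star u,v)=0$, and such structures correspond to the commutative associative matched pair of $(2)$ by \cite{04}. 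The genuine difficulty of the proof is thus organizational rather than conceptual: there are many mixed conditions, and the work is to check that the case-by-case decomposition of the Zinbiel axiom on $A\oplus A^{*}$ matches, term by term, the matched pair axioms and their duals.
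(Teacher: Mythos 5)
The paper does not prove Theorem \ref{Zba}: it is recalled (via the remark that a Zinbiel algebra is a special dendriform algebra, with pointers to \cite{02} and to \cite{5}), so there is no in-paper proof to compare against directly. The closest thing is the paper's proof of the pre-Poisson analogue in Section 4.2, and your strategy matches it almost step for step: forcing the mixed products to be the dual representations by pairing against the hyperbolic form gives $(1)\Leftrightarrow(3)$ exactly as in the paper's $\eqref{equiva1}\Leftrightarrow\eqref{equiva3}$; summing the two components of each dual action ($-L_{\star_1}^{*}+R_{\ast_1}^{*}=-L_{\ast_1}^{*}$, Proposition \ref{zr}(2)) gives $(3)\Rightarrow(2)$ exactly as in Lemma \ref{lem:pre to poisson}; and the dualization bookkeeping for the bialgebra compatibilities is the same, except that you dualize the Zinbiel matched pair conditions to prove $(3)\Leftrightarrow(4)$ directly where the paper proves $\eqref{equiva2}\Leftrightarrow\eqref{equiva4}$ by dualizing the associative-level conditions; either works.

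The one genuinely thin spot is the converse $(2)\Rightarrow(3)$. Your primary phrasing, that one ``recovers the Zinbiel matched pair conditions from these together with the Zinbiel structures on $A$ and $A^{*}$,'' is not an argument: a matched pair of commutative associative algebras does not by itself determine a Zinbiel matched pair, since the associative action $-L_{\ast_i}^{*}$ does not remember its splitting into $-L_{\star_i}^{*}$ and $R_{\ast_i}^{*}$. The correct mechanism is the one you relegate to an ``alternative'': build the commutative associative algebra on $A\oplus A^{*}$ from the matched pair in $(2)$, check that the hyperbolic $\omega$ is a Connes cocycle, invoke Proposition 2.1 to get a compatible Zinbiel structure, and then use nondegeneracy of $\omega$ to verify that this Zinbiel product restricts to $\ast_1$ and $\ast_2$ on the two subspaces (this last verification is the computation $\omega(x\ast y,z+c)=\omega(x\ast_1 y,z+c)$ in the paper's pre-Poisson proof, and it is not automatic). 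With that direction made the primary one, your cycle of implications closes. Two minor points: you state the Connes-cocycle route only as $(1)\Rightarrow(2)$ when it is the $(2)\Rightarrow(1)$ direction you actually need, and the reference for the correspondence between such double constructions and commutative associative matched pairs should be \cite{02} rather than \cite{04}.
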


Let $(A,\ast)$ be a Zinbiel algebra. Suppose that
$r=\sum_i a_i \otimes b_i \in A\otimes A$. Set
\begin{equation*}r_{12}\ast
r_{23}:=\sum_{i,j} a_i\otimes  b_i\ast a_j\otimes
b_j,~~r_{13}\star r_{23}:=\sum_{i,j} a_i\otimes a_j\otimes
b_i\star b_j,~~r_{12}\ast r_{13}:=\sum_{i,j}a_i \ast a_j\otimes b_i\otimes b_j
,\end{equation*}
\begin{equation*}r_{13}\ast r_{12}:=\sum_{i,j} a_i\ast a_j\otimes
b_j\otimes b_i,~~r_{13}\ast r_{23}:=\sum_{i,j} a_i\otimes  a_j\otimes b_i\ast
b_j,~~r_{23}\star
r_{12}:=\sum_{i,j} a_j\otimes  a_i\star b_j\otimes
b_i,\end{equation*}
\begin{equation*}r_{23}\ast r_{12}:=\sum_{i,j} a_j\otimes  a_i\ast b_j\otimes
b_i~~,r_{23}\ast r_{13}:=\sum_{i,j} a_j\otimes
a_i\otimes b_i\ast b_j,~~r_{12}\star r_{13}:=\sum_{i,j}a_i \star a_j\otimes b_i\otimes b_j
,\end{equation*}
\begin{equation*}D(r)=r_{23}\ast r_{12}+r_{23}\ast r_{13}-r_{12}\star r_{13},
D_{1}(r)=r_{13}\ast r_{12}+r_{13}\ast r_{23}-r_{23}\star r_{12},\end{equation*}
\begin{equation*}
D_{2}(r)=r_{12}\ast r_{23}+r_{12}\ast r_{13}-r_{13}\star r_{23}.
\end{equation*}
The equation $D(r)=0$ is called the {\bf $D$-equation} in $A$. If $r$ is symmetric,
then the following equations are equivalent:

(i) $D_{1}(r)=0$;

(ii) $D_{2}(r)=0$;

(iii) $D(r)=0$.

\begin{thm}
Let $(A,\ast)$ be a Zinbiel algebra.
Let $r\in A\otimes A$ and $\Delta:A\rightarrow A\otimes A$ be a linear map given by 
\begin{equation}\label{CB1}\Delta(x)=(I\otimes L_{\star}(x)-L_{\ast}(x)\otimes I)r,\;\forall x\in A.\end{equation}
If $r$ is a symmetric solution of the $D$-equation in $(A,\ast)$, then $(A,\ast,\Delta)$ is a Zinbiel bialgebra.
\end{thm}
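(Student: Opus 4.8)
The plan is to verify directly that $\Delta$, defined by \eqref{CB1}, makes $(A,\ast,\Delta)$ a Zinbiel bialgebra, i.e.\ that $(A,\Delta)$ satisfies the Zinbiel coalgebra axiom \eqref{zca eq1.1} and that the two compatibility conditions \eqref{zba eq1.1} and \eqref{zba eq1.2} hold. Writing $r=\sum_i a_i\otimes b_i$, the symmetry hypothesis reads $\sum_i a_i\otimes b_i=\sum_i b_i\otimes a_i$, and \eqref{CB1} unwinds to
\begin{equation*}
\Delta(x)=\sum_i a_i\otimes(x\star b_i)-\sum_i(x\ast a_i)\otimes b_i,\qquad x\in A.
\end{equation*}
The only ingredients needed are: the Zinbiel identity, used mostly in the form $u\ast(v\ast w)=(u\star v)\ast w$; the fact that $(\mathcal{C}(A),\star)$ is commutative associative, together with $L_{\star}=L_{\ast}+R_{\ast}$ and $L_{\star}(u\star v)=L_{\star}(u)L_{\star}(v)$; the symmetry $\tau(r)=r$, which permits relabellings $\sum_i\phi(a_i)\otimes\psi(b_i)=\sum_i\phi(b_i)\otimes\psi(a_i)$ inside tensor expressions; and the hypothesis that $r$ solves the $D$-equation, which by the equivalence recalled just before the theorem means $D(r)=D_{1}(r)=D_{2}(r)=0$.

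For the compatibility conditions I would substitute the displayed formula for $\Delta$ into both sides. Condition \eqref{zba eq1.2} collapses at once after writing $x\ast y+y\ast x=x\star y$ and applying $(x\star y)\ast a_i=x\ast(y\ast a_i)$ and $(x\star y)\star b_i=x\star(y\star b_i)$; it in fact holds for an arbitrary $r$. For the three-term equality \eqref{zba eq1.1}, the identity between $\Delta(y\ast z)+\tau\Delta(y\ast z)$ and the last of the three expressions follows by the same elementary manipulations together with one use of $\tau(r)=r$; the identity with the middle expression $(I\otimes L_{\ast}(y))\Delta(z)+\tau(L_{\ast}(y)\otimes I)\Delta(z)$ is where the $D$-equation is invoked, the residual discrepancy after simplification vanishing by the $D$-equation.

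The heart of the proof is the coalgebra axiom \eqref{zca eq1.1}. Applying $\Delta$ twice, each of $(\Delta\otimes I)\Delta(x)$, $(\tau\otimes I)(\Delta\otimes I)\Delta(x)$ and $(I\otimes\Delta)\Delta(x)$ becomes a sum over pairs of indices $(i,j)$ of triple tensors in $a_i,b_i,a_j,b_j$ and two inserted copies of $x$, with $\ast$ in some slots and $\star$ in others. Forming $(\Delta\otimes I)\Delta(x)+(\tau\otimes I)(\Delta\otimes I)\Delta(x)-(I\otimes\Delta)\Delta(x)$ and regrouping the monomials according to where the two copies of $x$ sit, the coefficient of each such pattern is — after the Zinbiel identity and the commutativity of $\star$ have been used to bring it into shape — one of $D(r)$, $D_{1}(r)$, $D_{2}(r)$ with an operator of the form $I\otimes I\otimes L_{\star}(x)$ or $L_{\ast}(x)\otimes I\otimes I$ (and the like) applied in a single tensor slot, hence zero. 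The symmetry $\tau(r)=r$ is essential here: it is what allows contributions coming from different tensor slots to be identified and assembled into these $D$-expressions, and it is what makes the $(\tau\otimes I)$-twisted term fall into place alongside the other two. (One could also try to route the argument through the equivalence of Theorem \ref{Zba}, but that already presupposes coassociativity, so nothing is gained.)

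The genuine obstacle is purely organizational and lives entirely in this last step: the doubled coproduct produces roughly two dozen triple-tensor monomials, and the real work is to choose, slot by slot, the right rewriting via $u\ast(v\ast w)=(u\star v)\ast w$ and commutativity of $\star$ so that the coassociativity defect becomes visibly a sum of $x$-twisted copies of $D(r)$, $D_{1}(r)$ and $D_{2}(r)$, with the signs and the placements of $\tau$-twists tracked carefully. There is no conceptual difficulty once that bookkeeping is set up, and the theorem follows.
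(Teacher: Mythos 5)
A preliminary remark: the paper offers no proof of this theorem. It sits in the recalled-background part of Section 4.1 and is imported from \cite{01,02}, so there is nothing in the paper to compare your argument with; the closest internal analogue is the explicit computation carried out in Proposition \ref{CNPB} for the pre-Poisson compatibilities, and your overall strategy --- expand $\Delta$, regroup monomials, and recognize each defect as an operator in $x$ applied to $D(r)$, $D_{1}(r)$, $D_{2}(r)$ or to $r-\tau(r)$ --- is exactly that method. Your observation that \eqref{zba eq1.2} holds for arbitrary $r$ is correct: both sides reduce to $\sum_i a_i\otimes((x\star y)\star b_i)-\sum_i((x\star y)\ast a_i)\otimes b_i$ using $u\ast(v\ast w)=(u\star v)\ast w$ and associativity of $\star$.

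The gap is that everything else is asserted rather than verified, and at least one of the asserted mechanisms cannot work as described. Consider the first equality in \eqref{zba eq1.1}. Using only $\tau(r)=r$, the Zinbiel identity and $u\star v-u\ast v=v\ast u$, one computes
\begin{align*}
\Delta(y\ast z)+\tau\Delta(y\ast z)&=\sum_i a_i\otimes\bigl(b_i\ast(y\ast z)\bigr)+\sum_i\bigl(a_i\ast(y\ast z)\bigr)\otimes b_i,\\
(I\otimes L_{\ast}(y))\Delta(z)+\tau(L_{\ast}(y)\otimes I)\Delta(z)&=\sum_i a_i\otimes\bigl(y\ast(b_i\ast z)\bigr)+\sum_i(a_i\ast z)\otimes(y\ast b_i).
\end{align*}
The first summands agree because $b_i\ast(y\ast z)=(b_i\star y)\ast z=y\ast(b_i\ast z)$; the difference of the second summands is
\begin{equation*}
\bigl(L_{\ast}(y)\otimes I-I\otimes L_{\ast}(y)\bigr)\bigl((R_{\ast}(z)\otimes I)r\bigr),
\end{equation*}
a single-index sum, i.e.\ an expression \emph{linear} in $r$. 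Since $D(r)$, $D_{1}(r)$, $D_{2}(r)$ are quadratic in $r$ (double sums over $i,j$), ``the residual discrepancy vanishes by the $D$-equation'' cannot be a correct account of why this term dies; whatever disposes of it (an additional invariance property of $r$, a cancellation against the third expression of \eqref{zba eq1.1}, or a correction to the transcription of the axiom) is not a direct application of $D(r)=0$. This indicates the verification sketched in your proposal has not actually been carried out, and that your sorting of which axiom needs which hypothesis is not yet right. The same caveat applies to the coassociativity step: the claim that the two dozen triple-tensor monomials assemble into $D$-, $D_{1}$- and $D_{2}$-blocks is precisely the content of the theorem and must be exhibited, as the authors do for the analogous identities in the proof of Proposition \ref{CNPB}. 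Until those computations are written out, what you have is a correct plan but not a proof.
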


\begin{defi}
	\begin{enumerate}
		\item A {\bf pre-Lie coalgebra} is a vector space $A$ together
		with a linear map $\delta: A \longrightarrow A\otimes A$ such that
		\begin{align}
			(\delta\otimes I)\delta-(I\otimes\delta)\delta=\tau_{12}((\delta\otimes I)\delta-(I\otimes\delta)\delta).
		\end{align}
	\item A {\bf pre-Lie bialgebra} is a triple $(A,\circ,
	\delta)$, where $(A,\circ)$ is a pre-Lie algebra and $(A,
	\delta)$ is a pre-Lie coalgebra such that for all $x,y\in A$,
	satisfying the following compatible conditions:
	\begin{align}
		\delta(x\circ y-y\circ x)=&(L_\circ(x) \otimes I)\delta(y)+(I\otimes \mathrm{ad}(x))\delta(y)-(L_\circ(y)\otimes I)\delta(x)-(I\otimes \mathrm{ad}(y))\delta(x),\\
		(I-\tau)\delta(x\circ y)=&(I-\tau)((I\otimes R_\circ(y)\delta(x)+(L_\circ(x)\otimes I)\delta(y)+(I\otimes L_\circ(x)\delta(y)),
	\end{align}where
	$\mathrm{ad}(x)=L_{\circ}(x)-R_{\circ}(x)$.
	\end{enumerate}	
\end{defi}
 
 Recall that a {\bf quadratic pre-Lie algebra} $(A,\circ,\omega)$ is a pre-Lie algebra together with a nondegenerate antisymmetric bilinear form $\omega$
 such that the following equation holds:
 \begin{equation}\label {ef2} \omega( x\circ y,z)=-\omega(y,[x,z]),~~\forall~x,y,z\in A.
 \end{equation} 
 
\begin{thm} Let $(A, \circ_1)$ be a pre-Lie algebra
	equipped with a comultiplication $\delta:A\longrightarrow A\otimes
	A$. Suppose that $\delta^{*}:A^{*}\otimes A^{*}\longrightarrow
	A^{*}$ induces a pre-Lie algebra on $A^{*}$. Put
	$\circ_2=\delta^{*}$. Then the following conditions are equivalent:
	\begin{enumerate}
		\item There is a quadratic pre-Lie algebra $(A \oplus A^{*},\ast,\omega)$ such that 
		$(A, \ast_1)$ and $(A^{*}, \ast_2)$ are Zinbiel subalgebras, where the bilinear 
		form $\omega$ on $A \oplus A^{*}$ is given by
		\begin{equation*}\omega(x + a, y + b)= \langle x,b\rangle -\langle a,y\rangle,~\forall~x,y\in A,a,b\in A^{*}.
		\end{equation*}
		\item $(\frak g(A),\frak g(A^{*}), L_{\circ_1}^{*}, L_{\circ_2}^{*})$ is
		a matched pair of Lie algebras.
		\item $((A, \circ_1),( A^{*},\circ_{2}), \mathrm{ad}_{1}^{*}, -R_{\circ_1}^{*}, \mathrm{ad}_{2}^{*}, -R_{\circ_2}^{*})$ is a matched pair of pre-Lie algebras.
		\item $(A, \circ_{1},\delta)$ is a pre-Lie bialgebra.
	\end{enumerate}
\end{thm}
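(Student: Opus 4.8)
The statement is the pre-Lie counterpart of Theorem \ref{Zba}; in item (1) the symbols $\ast,\ast_1,\ast_2$ and the word ``Zinbiel'' should of course read $\circ,\circ_1,\circ_2$ and ``pre-Lie''. The plan is to follow the same route as in the Zinbiel case and prove the equivalences $(1)\Leftrightarrow(3)$, $(3)\Leftrightarrow(2)$ and $(3)\Leftrightarrow(4)$. Throughout I use the dual-representation convention $\langle f^{*}(x)u^{*},v\rangle=-\langle u^{*},f(x)v\rangle$ fixed earlier, and the observation (from the Example following Proposition \ref{pl}) that $(A^{*},\mathrm{ad}_{1}^{*},-R_{\circ_1}^{*})$ is precisely the dual of the regular bimodule of $(A,\circ_1)$, and likewise $(A,\mathrm{ad}_{2}^{*},-R_{\circ_2}^{*})$ over $(A^{*},\circ_2)$; here $\mathrm{ad}_i^{*}=L_{\circ_i}^{*}-R_{\circ_i}^{*}$.

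For $(1)\Leftrightarrow(3)$: given (3), Theorem \ref{21} applied with trivial $g$ yields a pre-Lie algebra $(A\oplus A^{*},\circ)$ containing $(A,\circ_1)$ and $(A^{*},\circ_2)$ as subalgebras, with mixed products $(x,0)\circ(0,b)=(-R_{\circ_2}^{*}(b)x,\mathrm{ad}_{1}^{*}(x)b)$ and $(0,a)\circ(y,0)=(\mathrm{ad}_{2}^{*}(a)y,-R_{\circ_1}^{*}(y)a)$. A short computation shows the canonical form $\omega$ is antisymmetric and nondegenerate and satisfies $\omega(u\circ v,w)=-\omega(v,[u,w])$: expanding $u,v,w$ into $A$- and $A^{*}$-parts, this identity splits into components, each of which reduces, using the above convention, either to a tautology or to one of the matched-pair relations of (3); hence (1) holds. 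Conversely, let $(A\oplus A^{*},\circ,\omega)$ be a quadratic pre-Lie algebra in which $A$ and $A^{*}$ are (automatically $\omega$-isotropic) pre-Lie subalgebras. Nondegeneracy of $\omega$ together with $\omega(u\circ v,w)=-\omega(v,[u,w])$ pins down the $A$-part of $a\circ x$ as $\mathrm{ad}_2^{*}(a)x$ and the $A^{*}$-part of $x\circ a$ as $\mathrm{ad}_1^{*}(x)a$ using only isotropy, and then the pre-Lie identity (symmetry of the associator) forces the remaining two mixed components into the bicrossed-product form with the actions $-R_{\circ_1}^{*},-R_{\circ_2}^{*}$; the pre-Lie axiom for $\circ$ now becomes exactly the matched-pair conditions. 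This is the pre-Lie ``Manin triple $\Leftrightarrow$ matched pair'' correspondence.

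For $(3)\Leftrightarrow(2)$: since $(A^{*},\mathrm{ad}_{1}^{*},-R_{\circ_1}^{*})$ is a bimodule of $(A,\circ_1)$, Proposition \ref{pl}(b) gives that $\mathrm{ad}_{1}^{*}-(-R_{\circ_1}^{*})=L_{\circ_1}^{*}-R_{\circ_1}^{*}+R_{\circ_1}^{*}=L_{\circ_1}^{*}$ is a representation of $\frak g(A)$, and symmetrically $L_{\circ_2}^{*}$ is a representation of $\frak g(A^{*})$; moreover the sub-adjacent Lie algebra of the bicrossed-product pre-Lie algebra of (3) is the double Lie algebra of the matched pair $(\frak g(A),\frak g(A^{*}),L_{\circ_1}^{*},L_{\circ_2}^{*})$, as one reads off from $[(x,0),(0,b)]=(x,0)\circ(0,b)-(0,b)\circ(x,0)$. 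This gives $(3)\Rightarrow(2)$. For $(2)\Rightarrow(3)$ one runs the identifications backwards: the data of (2) together with the fixed multiplications $\circ_1$ on $A$ and $\circ_2=\delta^{*}$ on $A^{*}$ reconstruct the pre-Lie bimodule maps $\mathrm{ad}_{i}^{*},-R_{\circ_i}^{*}$, and the pre-Lie matched-pair axioms not already contained in the Lie ones reduce, after dualization, to the Lie matched-pair axioms plus the compatibility of each $\circ_i$ with $[\,,\,]_i$, exactly as in the commutative/Zinbiel step of Theorem \ref{Zba}.

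For $(3)\Leftrightarrow(4)$: I would write out the defining relations of a matched pair of pre-Lie algebras for $((A,\circ_1),(A^{*},\circ_2),\mathrm{ad}_{1}^{*},-R_{\circ_1}^{*},\mathrm{ad}_{2}^{*},-R_{\circ_2}^{*})$ — the bimodule axioms on each side and the two mixed compatibility identities of Theorem \ref{21} — and dualize each of them via $\circ_2=\delta^{*}$ and the pairing. The bimodule axioms over $(A^{*},\circ_2)$ translate into the pre-Lie coalgebra identity for $\delta$, while the two mixed identities translate, respectively, into the two compatibility conditions defining a pre-Lie bialgebra (the one for $\delta(x\circ y-y\circ x)$ and the one for $(I-\tau)\delta(x\circ y)$); concretely each matched-pair identity is paired against arbitrary test elements, everything is pushed onto the $A$-side, and the resulting tensor identity in $A\otimes A\otimes A$ is matched against the desired coalgebra identity, tracking the flips $\tau,\tau_{12}$ and the identification $(A\otimes A)^{*}\cong A^{*}\otimes A^{*}$. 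The main obstacle is of the same, essentially bookkeeping, nature in both $(1)\Leftrightarrow(3)$ and $(3)\Leftrightarrow(4)$: in the former one must argue the rigidity of the mixed products carefully (some components come from $\omega$-invariance alone, others only after invoking associator symmetry), and in the latter one must carry out the long dualization without sign or twist-map slips — which is exactly why it is convenient to keep the conventions aligned with, and to piggyback on, the already-established Zinbiel version, Theorem \ref{Zba}.
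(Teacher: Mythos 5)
This theorem is one the paper merely recalls from the left-symmetric bialgebra literature and states without proof, so there is no in-paper argument to compare against; the natural benchmark is the paper's own proof of the pre-Poisson analogue in Section 4.2, whose skeleton ($(1)\Leftrightarrow(3)$, $(3)\Rightarrow(2)$, $(2)\Rightarrow(1)$, $(2)\Leftrightarrow(4)$) your outline essentially reproduces, and your reading of item (1) as a typo for $\circ,\circ_1,\circ_2$ and ``pre-Lie'' is correct. Two refinements. First, in $(1)\Rightarrow(3)$ you do not actually need the pre-Lie identity to pin down the remaining two mixed components: applying the invariance $\omega(u\circ v,w)=-\omega(v,[u,w])$ to the triples $(x,y,a)$ and $(a,b,x)$, and using that $A$ and $A^{*}$ are ($\omega$-isotropic) subalgebras together with the already-determined components $\rho_{1}(x)=\mathrm{ad}_1^{*}(x)$ and $\rho_{2}(a)=\mathrm{ad}_2^{*}(a)$, forces $\mu_{1}(x)=-R_{\circ_1}^{*}(x)$ and $\mu_{2}(a)=-R_{\circ_2}^{*}(a)$ outright; this is exactly how the paper argues in the pre-Poisson case, and it is cleaner than invoking associator symmetry. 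Second, your $(2)\Rightarrow(3)$ is the thinnest step: rather than dualizing the pre-Lie matched-pair axioms directly against the Lie ones, the standard (and the paper's) route is $(2)\Rightarrow(1)$ — the Lie matched pair produces a Lie algebra on $A\oplus A^{*}$ for which $\omega$ is a symplectic form, the symplectic form induces a compatible pre-Lie product via $\omega(u\circ v,w)=-\omega(v,[u,w])$, and nondegeneracy shows this product restricts to $\circ_{1}$ and $\circ_{2}$ on the two subalgebras; with $(1)\Leftrightarrow(3)$ already established this closes the cycle without having to verify the pre-Lie matched-pair identities from the Lie ones by hand. The remaining steps — $(3)\Rightarrow(2)$ via Proposition \ref{pl} and reading off the sub-adjacent bracket of the bicrossed product, and $(3)\Leftrightarrow(4)$ by dualizing the matched-pair relations of Theorem \ref{21} through $\circ_{2}=\delta^{*}$ — are correctly identified and are exactly the computations carried out in the pre-Poisson setting later in the paper. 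With those two adjustments your outline is a correct proof.
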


Let $(A,\circ)$ be a pre-Lie algebra. Suppose that
$r=\sum_i a_i \otimes b_i \in A\otimes A$. Denote
\begin{equation*}r_{12}\circ r_{13}=\sum_{i,j}a_i \circ a_j\otimes b_i\otimes b_j
,~~ r_{12}\circ r_{23}=\sum_{i,j} a_i\otimes b_i\circ a_j\otimes
b_j,\end{equation*}
\begin{equation*}[r_{13}, r_{23}]=\sum_{i,j} a_i\otimes  a_j\otimes [b_i,
b_j],~~S(r)=-r_{12}\circ r_{13}+r_{12}\circ r_{23}+[r_{13}, r_{23}].\end{equation*}
The equation
$S(r)=0$ is called the {\bf $S$-equation} in $A$.

\begin{thm}
	Let $(A,\circ)$ be a pre-Lie algebra.
	Let $r\in A\otimes A$ and $\delta:A\rightarrow A\otimes A$ be a linear map given by 
	\begin{equation}\label{CB2}
		\delta(x)=(I\otimes \mathrm{ad}(x)+L_{\circ}(x)\otimes I)r,\;\forall x\in A.\end{equation}
	If $r$ is a symmetric solution of the $S$-equation in $(A,\circ)$, then $(A,\ast,\Delta)$ is a pre-Lie bialgebra.
\end{thm}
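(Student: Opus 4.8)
The plan is to check, directly from the coboundary formula \eqref{CB2}, the three requirements in the definition of a pre-Lie bialgebra for $(A,\circ,\delta)$: the co-pre-Lie coalgebra axiom $(\delta\otimes I)\delta-(I\otimes\delta)\delta=\tau_{12}\big((\delta\otimes I)\delta-(I\otimes\delta)\delta\big)$ and the two compatibility conditions between $\circ$ and $\delta$, reducing each of them to the hypothesis $S(r)=0$. This parallels the coboundary Zinbiel bialgebra statement attached to the $D$-equation \eqref{CB1} above, of which the pre-Lie case is the exact analogue. Throughout I write $r=\sum_i a_i\otimes b_i$ and use, freely, the symmetry $\sum_i a_i\otimes b_i=\sum_i b_i\otimes a_i$; unpacking \eqref{CB2} gives $\delta(x)=\sum_i a_i\otimes(x\circ b_i-b_i\circ x)+\sum_i (x\circ a_i)\otimes b_i$, which by symmetry may also be rewritten with $x$ acting in the first leg, and this flexibility is what makes the bookkeeping manageable.

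For the two compatibility conditions I would proceed as follows. Apply $\delta$ to $[x,y]=x\circ y-y\circ x$, expand via \eqref{CB2}, and expand the right-hand side of the first compatibility condition via \eqref{CB2} as well. Then use the elementary identities valid in any pre-Lie algebra: $(A,[\ ,\ ])$ is a Lie algebra; $L_{\circ}$ and $\mathrm{ad}$ are representations of $\frak g(A)$, so that $L_{\circ}([x,y])=L_{\circ}(x)L_{\circ}(y)-L_{\circ}(y)L_{\circ}(x)$ and $\mathrm{ad}([x,y])=\mathrm{ad}(x)\mathrm{ad}(y)-\mathrm{ad}(y)\mathrm{ad}(x)$; and $(A,L_{\circ},R_{\circ})$ is a bimodule of $(A,\circ)$, which encodes the defining pre-Lie identity. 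These let one collect all operators $L_{\circ}(x),\mathrm{ad}(x),R_{\circ}(y)$ into single tensor legs; after cancellation the difference of the two sides of the condition becomes a combination of terms each obtained from $S(r)=-r_{12}\circ r_{13}+r_{12}\circ r_{23}+[r_{13},r_{23}]$ by applying one operator $L_{\circ}(\cdot)$ or $\mathrm{ad}(\cdot)$ in one leg and contracting. Since $S(r)=0$ every such term vanishes, and the symmetry of $r$ is used to identify the surviving terms and to discard the contributions that would otherwise carry the wrong symmetry. The second compatibility condition, $(I-\tau)\delta(x\circ y)=(I-\tau)\big((I\otimes R_{\circ}(y))\delta(x)+(L_{\circ}(x)\otimes I)\delta(y)+(I\otimes L_{\circ}(x))\delta(y)\big)$, is treated identically, now also using the bimodule relations involving $R_{\circ}$.

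The main work, and the step I expect to be the real obstacle, is the co-pre-Lie coalgebra axiom. I would compute $(\delta\otimes I)\delta(x)$ and $(I\otimes\delta)\delta(x)$ as explicit double sums over the index of $r$, applying \eqref{CB2} twice; each is an element of $A^{\otimes 3}$ with roughly a dozen terms, built from $L_{\circ}(x)$ and $\mathrm{ad}(x)$ acting in the three legs together with one ``internal'' product coming from the second comultiplication. To organize this I would introduce a notation for $S(r)$ placed in a prescribed triple of legs, optionally dressed by an operator $L_{\circ}(x)$ or $\mathrm{ad}(x)$ in the leftover leg, and then repeatedly invoke the pre-Lie identity and the symmetry of $r$ to rewrite the double sums until $(\delta\otimes I)\delta(x)-(I\otimes\delta)\delta(x)$ is seen to differ from its image under $\tau_{12}$ by exactly such a $\tau$-permuted, operator-dressed combination of copies of $S(r)$. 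Since $S(r)=0$, this difference vanishes, giving the coalgebra axiom. The difficulty here is purely organizational: the computation in $A^{\otimes 3}$ is long and the cancellations are delicate, so the genuine content is arranging the terms so that the residue is visibly assembled out of copies of $S(r)$ and nothing more. Once the coalgebra axiom and the two compatibility conditions are verified, $(A,\circ,\delta)$ is a pre-Lie bialgebra by definition.
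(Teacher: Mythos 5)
The paper does not prove this statement at all: it is recalled from \cite{01} (Bai's work on left-symmetric bialgebras), so there is no in-paper argument to compare against. Judged on its own terms, your strategy is the correct and standard one --- expand $\delta$ from \eqref{CB2}, use that $L_{\circ}$ and $\mathrm{ad}$ are representations of the sub-adjacent Lie algebra together with the pre-Lie identity and the symmetry of $r$, and reduce every residual term to an operator-dressed copy of $S(r)$. This is exactly how the analogous coboundary computations are organized in the paper's own Proposition \ref{CNPB}. One small bonus you could have collected immediately: the first compatibility condition, $\delta([x,y])=(L_\circ(x)\otimes I)\delta(y)+(I\otimes \mathrm{ad}(x))\delta(y)-(L_\circ(y)\otimes I)\delta(x)-(I\otimes \mathrm{ad}(y))\delta(x)$, holds for \emph{arbitrary} $r$, with no use of $S(r)=0$ or symmetry: substituting \eqref{CB2} on both sides, the mixed terms $(L_\circ(x)\otimes\mathrm{ad}(y))r+(L_\circ(y)\otimes\mathrm{ad}(x))r$ cancel under antisymmetrization in $x,y$, and what remains is precisely $(L_\circ([x,y])\otimes I)r+(I\otimes\mathrm{ad}([x,y]))r$ because $L_\circ$ and $\mathrm{ad}$ are Lie algebra morphisms. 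This mirrors item (c) of Proposition \ref{CNPB}, where the corresponding identities ``hold naturally.''

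The genuine gap is that the proposal is a plan, not a proof. For a statement of this kind the entire mathematical content \emph{is} the computation: the verification of the co-pre-Lie coalgebra axiom in $A^{\otimes 3}$ and of the second compatibility condition are each a page of bookkeeping in which one must exhibit the residue explicitly as a sum of terms of the form (operator in one leg) applied to $S(r)$ sitting in the other legs, possibly after a $\tau_{12}$. You correctly identify the coalgebra axiom as ``the real obstacle'' and then defer it with the remark that the difficulty is ``purely organizational'' --- but organizing those cancellations correctly is exactly where such arguments succeed or fail, and nothing in the proposal certifies that the non-$\tau_{12}$-symmetric part of $(\delta\otimes I)\delta-(I\otimes\delta)\delta$ really assembles into copies of $S(r)$ and nothing else. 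Until that double sum is actually written out and matched term by term (as is done for the Zinbiel/pre-Poisson analogues in the proof of Proposition \ref{CNPB}), the theorem has not been proved.
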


\subsection{Quadratic pre-Poisson algebras and pre-Poisson bialgebras}
\begin{defi} Let $(A, \ast,\circ)$ be a pre-Poisson
algebra. If there exists a nondegenerate antisymmetric bilinear form 
$\omega$ on $A$ such that $(A,\ast,\omega)$ is a quadratic Zinbiel algebra and $(A,\circ,\omega)$ is a quadratic pre-Lie algebra, then we say $(A,\ast,\circ,\omega)$ is a {\bf quadratic pre-Poisson algebra}.
\end{defi}

The following result specializes Example 5.23 in \cite{09} from dendriform algebras into Zinbiel algebras.

\begin{pro}\label{pro:4.8} Let $(A, \ast, \circ,\omega)$ be a quadratic pre-Poisson algebra, and $(A, \star, [ \ , \ ])$ be
its sub-adjacent Poisson algebra. Then $\omega$ is both a Connes cocycle on the commutative
associative algebra $(A, \star)$ and a symplectic form on the Lie algebra $(A,  [ \ , \ ])$. 
Conversely,
suppose that $(A, \star, [ \ , \ ])$ is a Poisson algebra, and $\omega$ is
 both a Connes cocycle on the commutative associative algebra $(A, \star)$
  and a symplectic form on the Lie algebra $(A,  [ \ , \ ])$. Then
there exists a compatible quadratic pre-Poisson algebra $(A, \ast, \circ,\omega)$
 given by Eqs.~(\ref{ef1}) and (\ref{ef2}).\end{pro}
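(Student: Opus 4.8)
The plan is to prove the two implications separately, reducing each to the defining identities (\ref{ef1}) and (\ref{ef2}), to the two propositions recalled in Section~2 (a Connes cocycle on a commutative associative algebra induces a compatible Zinbiel structure via (\ref{ef1}); a symplectic form on a Lie algebra induces a compatible pre-Lie structure via (\ref{ef2})), and to the nondegeneracy of $\omega$.

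For the direct implication, suppose $(A,\ast,\circ,\omega)$ is quadratic pre-Poisson, so (\ref{ef1}) and (\ref{ef2}) both hold for the same $\omega$, where $\star$ and $[\ ,\ ]$ are the operations of the sub-adjacent Poisson algebra. To obtain the Connes cocycle identity for $(A,\star)$, I would expand $\omega(x\star y,z)=\omega(x\ast y,z)+\omega(y\ast x,z)=\omega(y,x\star z)+\omega(x,y\star z)$ using (\ref{ef1}), rewrite the two terms via the antisymmetry of $\omega$ and the commutativity of $\star$, and observe that the cyclic sum over $(x,y,z)$ cancels in pairs. A parallel computation with (\ref{ef2}), the antisymmetry of $\omega$, and the antisymmetry of $[\ ,\ ]$ gives the antisymmetric $2$-cocycle (symplectic) identity for $(A,[\ ,\ ])$. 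This is the Zinbiel specialization of Example~5.23 in \cite{09}, and completes the first half.

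For the converse, start from a Poisson algebra $(A,\star,[\ ,\ ])$ with $\omega$ both a Connes cocycle on $(A,\star)$ and a symplectic form on $(A,[\ ,\ ])$, and define $\ast$ and $\circ$ by (\ref{ef1}) and (\ref{ef2}). The proposition on Connes cocycles gives that $(A,\ast)$ is a Zinbiel algebra with $x\ast y+y\ast x=x\star y$, and since (\ref{ef1}) is the quadratic Zinbiel identity, $(A,\ast,\omega)$ is a quadratic Zinbiel algebra; the proposition on symplectic Lie algebras gives that $(A,\circ)$ is a pre-Lie algebra with $x\circ y-y\circ x=[x,y]$, so $(A,\circ,\omega)$ is a quadratic pre-Lie algebra. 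It then remains only to verify the two pre-Poisson compatibility conditions (\ref{ppa eq1.1}) and (\ref{ppa eq1.2}). Here I would use the nondegeneracy of $\omega$: to prove an identity $u=v$ in $A$ it suffices to check $\omega(u,w)=\omega(v,w)$ for all $w\in A$. Pairing (\ref{ppa eq1.1}) against $w$, writing $x\circ y-y\circ x=[x,y]$, and repeatedly applying (\ref{ef1}) and (\ref{ef2}) to move every term into the form $\omega(z,-)$, the identity reduces exactly to the Poisson Leibniz rule $[x,y\star w]=[x,y]\star w+y\star[x,w]$. The same pairing and substitution turn (\ref{ppa eq1.2}) into $[x\star y,w]=[x,y\star w]+[y,x\star w]$, which in turn follows from the Leibniz rule applied to each bracket together with the antisymmetry of $[\ ,\ ]$ and the commutativity of $\star$. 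Nondegeneracy of $\omega$ then yields both compatibility conditions, so $(A,\ast,\circ)$ is a pre-Poisson algebra and $(A,\ast,\circ,\omega)$ is a quadratic pre-Poisson algebra.

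All of the above is routine bookkeeping; the one delicate point --- the main obstacle --- is the converse's reduction of the two pre-Poisson compatibility conditions to a single instance of the Poisson Leibniz rule, where the substitutions coming from (\ref{ef1}) and (\ref{ef2}) must be carried out in the right order so that every resulting term sits in the form $\omega(z,-)$ before the Leibniz rule and the symmetry of $\star$ are applied.
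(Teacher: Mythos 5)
Your proposal is correct. Note that the paper itself supplies no proof of Proposition \ref{pro:4.8}; it only remarks that the statement specializes Example 5.23 of \cite{09} from dendriform algebras to Zinbiel algebras, so your argument is a self-contained verification of what the paper delegates to a citation, and it is the natural one. I checked the two reductions you flag as the delicate point: pairing (\ref{ppa eq1.1}) against $w$ via (\ref{ef1}) and (\ref{ef2}) gives $\omega(z,[x,y]\star w)$ on the left and $\omega(z,[x,y\star w]-y\star[x,w])$ on the right, so nondegeneracy reduces it exactly to the Leibniz rule; pairing (\ref{ppa eq1.2}) gives $[x\star y,w]=[x,y\star w]+[y,x\star w]$, which follows from two applications of the Leibniz rule together with the antisymmetry of $[\ ,\ ]$ and the commutativity of $\star$, since the terms $[x,y]\star w$ and $[y,x]\star w$ cancel. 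The direct implication is likewise a two-line computation from (\ref{ef1}), (\ref{ef2}), the antisymmetry of $\omega$ and the (anti)symmetry of the Poisson operations, as you describe. The only point worth making explicit in a written-up version is that the two recalled propositions of Section~2 guarantee that the Zinbiel and pre-Lie structures defined by (\ref{ef1}) and (\ref{ef2}) are \emph{compatible} with $(A,\star,[\ ,\ ])$, i.e.\ that $x\ast y+y\ast x=x\star y$ and $x\circ y-y\circ x=[x,y]$, which is what licenses substituting $\star$ and $[\ ,\ ]$ for the symmetrized and antisymmetrized products when rewriting (\ref{ppa eq1.1}) and (\ref{ppa eq1.2}); you use this implicitly and it is indeed part of those propositions' conclusions.
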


%\begin{proof} Let $(A, \ast, \circ,\omega)$ be a quadratic pre-Poisson algebra, and $(A, \star, [ \ , \ ])$ be its sub-adjacent Poisson algebra. Using Eq.~(\ref{ef2}), we have\begin{equation*}\omega([x, y], z) = \omega(x \circ y, z)-\omega(y \circ x, z)=-\omega(y, [x, z]) + \omega(x, [y, z]),\end{equation*} which implies that $\omega$ is a symplectic form on the Lie algebra $(A, [ \ , \ ])$. Similarly, by Eq.~(\ref{ef1}), $\omega$ is a Connes cocycle on the commutative associative algebra $(A, \star)$. Conversely, assume that $(A, \star, [ \ , \ ])$ is a Poisson algebra, and $\omega$ is both a Connes cocycle on the commutative associative algebra $(A, \star)$ and a symplectic form on the Lie algebra $(A, [ \ , \ ])$. Define a linear map  \begin{equation*}\mathcal{T} : A\longrightarrow A^{*} ,~ \langle\mathcal{T}(x),y \rangle=\omega(x, y), ~\forall~x, y\in A.\end{equation*} which implies that $\omega$ is a symplectic form on the Lie algebra $(A, [ \ , \ ])$. Then $\mathcal{T} $ is invertible and $\mathcal{T}^{-1}$ is an $\mathcal{O}$-operator of the Poisson $(A,\star, [ \ , \ ])$  associated to $(A^{*},-L_{\star}^{*},\mathrm{ad}^{*})$. Thus, there is a compatible pre-Poisson algebra structure $(A, \ast, \circ)$ given by\begin{equation*} x\ast y=-\mathcal{T}^{-1}(L_{\star}^{*}(x)\mathcal{T}(y)),~~ x\circ y=-\mathcal{T}^{-1}(\mathrm{ad}^{*}(x)\mathcal{T}(y)),~~\forall~x,y\in A,\end{equation*} which gives exactly Eqs.~(\ref{ef1})-(\ref{ef2}).\end{proof} 

\begin{defi} A {\bf pre-Poisson coalgebra} is a triple
$(A,\Delta,\delta)$, where $(A,\Delta)$ is a Zinbiel coalgebra
and $(A,\delta)$ is a pre-Lie coalgebra and $\Delta$ and
$\delta$ are compatible in the following sense:
\begin{equation}\label{Pc1.1}((\delta-\tau\delta)\otimes I)\Delta=(I\otimes \Delta)\delta-(\tau\otimes I)(I\otimes\delta)\Delta,\end{equation}
\begin{equation}\label{Pc1.2}((\Delta+\tau\Delta)\otimes I)\delta=(I\otimes \delta)\Delta+(\tau\otimes I)(I\otimes\delta)\Delta.\end{equation}
\end{defi}

Let $A$ be a vector space with linear maps $\Delta,\delta:A\rightarrow A\otimes A$, and $\ast,\circ:A^{*}\otimes A^{*}\rightarrow A^{*}$ be the linear duals of $\Delta,\delta$ respectively. By a straightforward computation, $(A,\Delta,\delta)$ is a pre-Poisson coalgebra if and only if $(A^{*},\ast,\circ)$ is a pre-Poisson algebra.

\begin{defi} Let $(A,\ast,\circ)$ be a pre-Poisson algebra.
Suppose that there are two comultiplications
$\Delta,\delta:A\longrightarrow A\otimes A$ such that
$(A,\Delta,\delta)$ is a pre-Poisson coalgebra. If in addition, $(A,
\ast,\Delta)$ is a Zinbiel bialgebra, $(A,\circ, \delta)$ is
a pre-Lie bialgebra and $\Delta,\delta$ satisfy the following
compatible conditions:
\begin{align}\label{ppba eq1.4}&
	\delta(x{\star}y)=(L_{\circ}(x)\otimes I)\Delta(y)+(L_{\circ}(y)\otimes I)\Delta(x)-(I\otimes L_{\star}(x))\delta(y)-(I\otimes L_{\star}(y))\delta(x)
	,\end{align}
\begin{align}\label{ppba eq1.1}&
	\Delta([x, y])=(L_{\circ}(x)\otimes I)\Delta(y)-(I\otimes
	L_{\star}(y))\delta(x)+(L_{\ast}(y)\otimes I)\delta(x)+(I\otimes
	\mathrm{ad}(x) )\Delta(y),\end{align}
\begin{align}\label{ppba eq1.3}
\delta(x\ast y)-\tau\delta(x\ast y)=&(I\otimes
L_{\ast}(x))\delta(y)-\tau(L_{\ast}(x)\otimes
I)\delta(y)-(L_{\circ}(x)\otimes I)\Delta(y)\\&-\tau(I\otimes
L_{\circ}(x))\Delta(y)+(I\otimes R_{\ast}(y))\delta(x)
+\tau(I\otimes R_{\circ}(y))\Delta(x)\nonumber ,\end{align}
\begin{align}\label{ppba eq1.2}
	\Delta(x\circ y)+\tau\Delta(x\circ
	y)=&(L_{\circ}(x)\otimes
	I)\Delta(y)+\tau(L_{\circ}(x)\otimes I)\Delta(y)+(I\otimes
	L_{\circ}(x))\Delta(y)\\&+\tau(I\otimes
	L_{\circ}(x))\Delta(y)-(I\otimes
	R_{\ast}(y))\delta(x)-\tau(I\otimes R_{\ast}(y))\delta(x)\nonumber
	,\end{align}
%\begin{align}\label{ppba eq1.5}\Delta(x\circ y)+\tau\Delta(x\circ y)=&(I\otimesL_{\ast}(x))\delta(y)-(L_{\ast}(x)\otimes I)\delta(y)+\tau(I\otimesL_{\ast}(x))\delta(y)\\&-\tau(L_{\ast}(x)\otimes I)\delta(y)+\tau(I\otimes R_{\circ}(y))\Delta(x)+(I\otimes R_{\circ}(y))\Delta(x)\nonumber ,\end{align}}
for all $x,y\in A$, then $(A,\ast,\circ, \Delta,\delta)$ is called
a {\bf pre-Poisson bialgebra}.\end{defi}

\begin{pro}\label{pro:411} \cite{04} Let $(P_{1},\star_{1},[ \ , \ ]_1)$ and
$(P_{2},\star_{2},[ \ , \ ]_2)$ be two Poisson
algebras. Suppose that there exist four linear maps
$\mu_{1},\rho_1:P_1\longrightarrow \hbox{End}(P_2)$
and $\mu_{2},\rho_2:P_2\longrightarrow
\hbox{End}(P_1)$.
Define multiplications on $P_{1}\oplus P_{2}$ by
\begin{equation}\label{eq:140}[(x,a),(y,b)]=([x,y]_{1}+\rho_{2}(a)y-\rho_{2}(b)x,[a,b]_{2}
	+\rho_{1}(x)b-\rho_{1}(y)a),\end{equation}
\begin{equation}\label{eq:141}(x,a)\star
	(y,b)=(x\star_{1}y+\mu_{2}(a)y+\mu_{2}(b)x,a\star_{2}b+\mu_{1}(x)b+\mu_{1}(y)a),\;\forall x,y\in P_{1},a,b\in P_{2}.\end{equation}
Then $(P_{1}\oplus P_{2},\star,[\ ,\ ])$ is a Poisson algebra if and only if the following conditions hold:
\begin{enumerate}
	\item $(P_{1},P_{2},\mu_{1},\mu_{2})$
	is a matched pair of commutative associative algebras.
	\item $(P_{1},P_{2},\rho_{1},\rho_{2})$
	is a matched pair of Lie algebras.
	\item $(P_{2},\mu_{1},\rho_{1})$ is a 
	representation of  $(P_1,\star_{1},[ \ , \ ]_1)$.
	\item $(P_{1},\mu_{2},\rho_{2})$ is a 
	representation of  $(P_2,\star_{2},[ \ , \ ]_2)$.
\item The following compatible conditions hold:
\begin{equation}\label{P1}
	\rho_{2}(a)(x\star_{1}y)=(\rho_{2}(a)x)\star_{1}y
	+x\star_1(\rho_{2}(a)y)-\mu_{2}(\rho_{1}(x)a)y
	-\mu_{2}(\rho_{1}(y)a)x, \end{equation}
\begin{equation}\label{P2}[x,\mu_{2}(a)y]_{1}-\rho_{2}(\mu_{1}(y)a)x
	=\mu_{2}(\rho_{1}(x)a)y-(\rho_{2}(a)x)\star_{1}y
	+\mu_{2}(a)([x,y]_{1}),\end{equation}
\begin{equation}\label{P3}\rho_{1}(x)(a\star_{2}b)=(\rho_{1}(x)a)\star_{2}b
	+a\star_2(\rho_{1}(x)b)-\mu_{1}(\rho_{2}(a)x)b
	-\mu_{1}(\rho_{2}(b)x)a,\end{equation}
\begin{equation}\label{P4}[a,\mu_{1}(x)b]_{2}-\rho_{1}(\mu_{2}(b)x)a
	=\mu_{1}(\rho_{2}(a)x)b-(\rho_{1}(x)a)\star_{2}b
	+\mu_{1}(x)([a,b]_{2}),
\end{equation}
for any $x,y\in P_{1}$ and $a,b\in P_{2}$.
\end{enumerate} 
In this case, 
we denote this Poisson algebra simply by $P_{1}\bowtie P_{2}$ and 
$(P_{1},P_{2},\mu_{1},\rho_{1},\mu_{2},\rho_{2})$
satisfying the above conditions is called a {\bf matched pair of
Poisson algebras}.\end{pro}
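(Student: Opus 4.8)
The plan is to split the Poisson-algebra axioms on $P_1\oplus P_2$ into three blocks — commutativity and associativity of $\star$ (defined by \eqref{eq:141}), the Jacobi identity for $[\ ,\ ]$ (defined by \eqref{eq:140}), and the Leibniz rule tying them together — and to analyse each block separately. First I would apply the classical matched-pair descriptions for commutative associative algebras and for Lie algebras. With $\star$ as in \eqref{eq:141}, $(P_1\oplus P_2,\star)$ is a commutative associative algebra exactly when $(P_1,P_2,\mu_1,\mu_2)$ is a matched pair of commutative associative algebras; in particular this forces $(P_2,\mu_1)$ to be a representation of $(P_1,\star_1)$ and $(P_1,\mu_2)$ a representation of $(P_2,\star_2)$. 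Likewise, with $[\ ,\ ]$ as in \eqref{eq:140}, $(P_1\oplus P_2,[\ ,\ ])$ is a Lie algebra exactly when $(P_1,P_2,\rho_1,\rho_2)$ is a matched pair of Lie algebras, which forces $(P_2,\rho_1)$ and $(P_1,\rho_2)$ to be representations of $\frak g(P_1)$ and $\frak g(P_2)$. These two steps already yield conditions (1) and (2), together with the ``associative'' and ``Lie'' halves of (3) and (4).

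Granting that $(P_1\oplus P_2,\star)$ is commutative associative and $(P_1\oplus P_2,[\ ,\ ])$ is a Lie algebra, the remaining requirement is the Leibniz identity $[w,u\star v]=[w,u]\star v+u\star[w,v]$ for all $w,u,v\in P_1\oplus P_2$. By multilinearity and the symmetry $u\star v=v\star u$, it is enough to test it on triples $(w;u,v)$ whose three entries each lie in $P_1$ or in $P_2$. The triples $(P_1;P_1,P_1)$ and $(P_2;P_2,P_2)$ reduce to the Leibniz rules inside $P_1$ and inside $P_2$ and hence hold automatically; up to the symmetry $(w;u,v)\sim(w;v,u)$ there remain exactly four ``mixed'' triples: $(P_2;P_1,P_1)$, $(P_1;P_1,P_2)$, $(P_1;P_2,P_2)$ and $(P_2;P_2,P_1)$. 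For each, I would insert \eqref{eq:140} and \eqref{eq:141}, expand both sides, and split the resulting equality in $P_1\oplus P_2$ into its $P_1$- and $P_2$-components. A direct computation then shows: $(P_2;P_1,P_1)$ produces \eqref{P1} together with $\rho_1(x\star_1 y)=\mu_1(x)\rho_1(y)+\mu_1(y)\rho_1(x)$; $(P_1;P_1,P_2)$ produces \eqref{P2} together with $\mu_1([x,y]_1)=\rho_1(x)\mu_1(y)-\mu_1(y)\rho_1(x)$; and, by the $P_1\leftrightarrow P_2$ symmetry, $(P_1;P_2,P_2)$ and $(P_2;P_2,P_1)$ produce \eqref{P3}, \eqref{P4} together with $\rho_2(a\star_2 b)=\mu_2(a)\rho_2(b)+\mu_2(b)\rho_2(a)$ and $\mu_2([a,b]_2)=\rho_2(a)\mu_2(b)-\mu_2(b)\rho_2(a)$.

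Finally one reassembles the conditions. The four extra relations just obtained are precisely the ``mixed'' compatibility identities in the definition of a representation of a Poisson algebra; combined with the associative/Lie representation properties extracted in the first step, they say exactly that $(P_2,\mu_1,\rho_1)$ is a representation of the Poisson algebra $(P_1,\star_1,[\ ,\ ]_1)$ and $(P_1,\mu_2,\rho_2)$ a representation of $(P_2,\star_2,[\ ,\ ]_2)$ — i.e. conditions (3) and (4) — whereas \eqref{P1}--\eqref{P4} are exactly condition (5). This gives the claimed equivalence, and in that case $(P_1\oplus P_2,\star,[\ ,\ ])$ is by construction the Poisson algebra denoted $P_1\bowtie P_2$.

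The routine but delicate part is the four mixed-triple expansions: one must carry along all of $\mu_1,\mu_2,\rho_1,\rho_2$ and the products $\star_1,\star_2,[\ ,\ ]_1,[\ ,\ ]_2$ without a sign or bookkeeping slip, and one must check that nothing is double-counted — in particular that the ``pure'' parts (``$\mu_i$ is an associative representation'', ``$\rho_i$ is a Lie representation'') are genuinely supplied by the first step and not re-derived here, and that \eqref{P1}--\eqref{P4} are independent of the matched-pair axioms of the first step. I expect this bookkeeping to be the main obstacle; the cleanest way to keep it under control is to exploit the $P_1\leftrightarrow P_2$ symmetry, under which $(P_2;P_1,P_1)\leftrightarrow(P_1;P_2,P_2)$ and $(P_1;P_1,P_2)\leftrightarrow(P_2;P_2,P_1)$, so that only two of the four expansions need be carried out by hand. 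Since the statement is quoted here from \cite{04}, one may alternatively simply refer to the proof given there.
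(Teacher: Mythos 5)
Your proposal is correct and is the standard direct verification: the paper itself gives no proof of this proposition, merely recalling it from \cite{04}, and the argument there is exactly the componentwise expansion you describe (matched-pair criteria for the associative and Lie parts, then the Leibniz identity tested on mixed triples, whose $P_1$- and $P_2$-components yield \eqref{P1}--\eqref{P4} and the two Poisson-representation compatibilities for each of $(\mu_1,\rho_1)$ and $(\mu_2,\rho_2)$). Your identification of which triple produces which identity checks out, so nothing further is needed.
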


\begin{lem}\label{lem:pre to poisson}
	Let $(A_{1}, \ast_1,\circ_1)$ and  $(A_{2}, \ast_2,\circ_2)$ be pre-Poisson algebras, whose sub-adjacent Poisson algebras are $(A_{1},\star_{1},[\ ,\ ]_{1})$ and $(A_{2},\star_{2},[\ ,\ ]_{2})$.
	If $(A_1, A_2,
	l_{1},r_{1},\rho_1,\mu_1,l_{2},r_{2},\rho_2,\mu_2)$
	 is a matched pair of
		pre-Poisson algebras, then $(A_1, A_2,
		l_{1}+r_{1},\rho_1-\mu_1,l_{2}+r_{2},\rho_2-\mu_2)$ is a matched pair of Poisson algebras.
\end{lem}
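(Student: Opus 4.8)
The plan is to route the argument through bicrossed products and Proposition \ref{pro:411}. By Theorem \ref{thm:3.8}, the matched pair $(A_1, A_2, l_1, r_1, \rho_1, \mu_1, l_2, r_2, \rho_2, \mu_2)$ of pre-Poisson algebras makes $A_1\oplus A_2$ into the pre-Poisson algebra $A_1\bowtie A_2$ with operations (\ref{ppmp eq1.1})--(\ref{ppmp eq1.2}). In particular $(A_1\oplus A_2,\ast,\circ)$ is a pre-Poisson algebra, so its sub-adjacent structure $(A_1\oplus A_2,\star,[\ ,\ ])$, defined by $(x,a)\star(y,b)=(x,a)\ast(y,b)+(y,b)\ast(x,a)$ and $[(x,a),(y,b)]=(x,a)\circ(y,b)-(y,b)\circ(x,a)$, is automatically a Poisson algebra.

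First I would compute this sub-adjacent structure explicitly from (\ref{ppmp eq1.1})--(\ref{ppmp eq1.2}), obtaining for all $x,y\in A_1$ and $a,b\in A_2$
\begin{align*}
(x,a)\star(y,b)&=\big(x\star_1 y+(l_2+r_2)(a)y+(l_2+r_2)(b)x,\ a\star_2 b+(l_1+r_1)(x)b+(l_1+r_1)(y)a\big),\\
[(x,a),(y,b)]&=\big([x,y]_1+(\rho_2-\mu_2)(a)y-(\rho_2-\mu_2)(b)x,\ [a,b]_2+(\rho_1-\mu_1)(x)b-(\rho_1-\mu_1)(y)a\big).
\end{align*}
This is exactly the Poisson multiplication on $A_1\oplus A_2$ prescribed by (\ref{eq:140})--(\ref{eq:141}) for the quadruple of maps $(l_1+r_1,\rho_1-\mu_1,l_2+r_2,\rho_2-\mu_2)$, with $(A_1,\star_1,[\ ,\ ]_1)$ and $(A_2,\star_2,[\ ,\ ]_2)$ the sub-adjacent Poisson algebras. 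Since this structure is a Poisson algebra, the ``only if'' direction of Proposition \ref{pro:411}, together with the definition of a matched pair of Poisson algebras therein, gives precisely that $(A_1,A_2,l_1+r_1,\rho_1-\mu_1,l_2+r_2,\rho_2-\mu_2)$ is a matched pair of Poisson algebras. As a consistency check, the fact that $(A_2,l_1+r_1,\rho_1-\mu_1)$ and $(A_1,l_2+r_2,\rho_2-\mu_2)$ are representations of the respective sub-adjacent Poisson algebras --- conditions (c) and (d) of Proposition \ref{pro:411} --- also follows directly from the earlier proposition stating that $(V,l+r,\rho-\mu)$ is a representation of $(A,\star,[\ ,\ ])$ whenever $(V,l,r,\rho,\mu)$ represents the pre-Poisson algebra $(A,\ast,\circ)$.

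There is no essential difficulty in this approach; the only point requiring care is the bookkeeping in the explicit expansion of the sub-adjacent operations on $A_1\oplus A_2$ and matching the resulting maps against (\ref{eq:140})--(\ref{eq:141}). A direct alternative --- verifying the defining relations (\ref{P1})--(\ref{P4}) of a matched pair of Poisson algebras by combining the pre-Poisson matched pair conditions (\ref{ppmp eq1.3})--(\ref{ppmp eq1.12}) with the Zinbiel and pre-Lie matched pair axioms --- is available but substantially more laborious, and I would fall back on it only if the ``only if'' direction of Proposition \ref{pro:411} were not at hand in the form needed.
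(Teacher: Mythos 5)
Your argument is correct and is essentially identical to the paper's proof: both pass through Theorem \ref{thm:3.8} to form the pre-Poisson algebra $A_1\bowtie A_2$, compute its sub-adjacent Poisson structure explicitly, observe that it matches \eqref{eq:140}--\eqref{eq:141} for the maps $(l_1+r_1,\rho_1-\mu_1,l_2+r_2,\rho_2-\mu_2)$, and conclude via the ``only if'' direction of Proposition \ref{pro:411}. No gaps.
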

\begin{proof}
	By Theorem \ref{thm:3.8}, there is a pre-Poisson algebra on $A_{1}\oplus A_{2}$ given by \eqref{ppmp eq1.1} and \eqref{ppmp eq1.2}. The sub-adjacent Poisson algebra is given by 
	{\small
	\begin{eqnarray*}
(x,a)\star(y,b)&=&(x,a)\ast (y,b)+ (y,b)\ast (x,a)\\
&=&(x\star_{1}y+	(l_{2}+r_{2})(a)y+(l_{2}+r_{2})(b)x
,a\star_{2}b+	(l_{1}+r_{1})(x)b+(l_{1}+r_{1})(y)a),\\
\ [(x,a),(y,b)]&=&(x,a)\circ (y,b)- (y,b)\circ (x,a)\\
&=&([x,y]_{1}+(\rho_2-\mu_2)(a)y-(\rho_2-\mu_2)(b)x,
[a,b]_{2}+(\rho_1-\mu_1)(x)b-(\rho_1-\mu_1)(y)a).
	\end{eqnarray*}}By Proposition \ref{pro:411}, $(A_1, A_2,
l_{1}+r_{1},\rho_1-\mu_1,l_{2}+r_{2},\rho_2-\mu_2)$ is a matched pair of Poisson algebras.
\end{proof}

\begin{thm} Let $(A, \ast_1,\circ_1)$ be a pre-Poisson algebra
equipped with two comultiplications $\Delta,\delta:A\longrightarrow
A\otimes A$. Suppose that $\Delta^{*},\delta^{*}:A^{*}\otimes
A^{*}\longrightarrow A^{*}$ induce a pre-Poisson algebra structure
on $A^{*}$. Put $\ast_2=\Delta^{*},\circ_2=\delta^{*}$. Then the
following conditions are equivalent:
\begin{enumerate}
	\item\label{equiva1} There is a quadratic pre-Poisson algebra $(A \oplus A^{*},\ast,\circ,\omega )$ such that 
	$(A, \ast_1,\circ_1)$ and $(A^{*}, \ast_2$,
	$\circ_2)$ are pre-Poisson subalgebras, where the bilinear 
	form $\omega$ on $A \oplus A^{*}$ is given by
	\begin{equation}\label{eq:om}
		\omega(x + a, y + b)= \langle x,b\rangle -\langle a,y\rangle,~\forall~x,y\in A,a,b\in A^{*}.
	\end{equation}
\item\label{equiva2}
$(A, A^{*}, -L_{\ast_1}^{*},L_{\circ_1}^{*},-L_{\ast_2}^{*},L_{\circ_2}^{*}) $ is a matched pair of Poisson algebras.
\item\label{equiva3}
$(A, A^{*}, -L_{\ast_1}^{*}-R_{\ast_1}^{*}, R_{\ast_1}^{*}, L_{\circ_1}^{*}-R_{\circ_1}^{*},-R_{\circ_1}^{*},-L_{\ast_2}^{*}-R_{\ast_2}^{*}, R_{\ast_2}^{*},
L_{\circ_2}^{*}-R_{\circ_2}^{*},-R_{\circ_2}^{*})$ is a matched pair of pre-Poisson algebras.
	\item\label{equiva4} $(A, \ast_{1},\circ_{1},\Delta,\delta)$ is a pre-Poisson bialgebra.
\end{enumerate}
\end{thm}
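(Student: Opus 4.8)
The plan is to prove the chain of implications $(\ref{equiva1})\Leftrightarrow(\ref{equiva3})$, $(\ref{equiva3})\Leftrightarrow(\ref{equiva4})$, $(\ref{equiva3})\Rightarrow(\ref{equiva2})$ and $(\ref{equiva2})\Rightarrow(\ref{equiva1})$, each time reducing to the Zinbiel and pre-Lie analogues already recorded together with the matched-pair formalism of Theorem \ref{thm:3.8}. For $(\ref{equiva1})\Leftrightarrow(\ref{equiva3})$: a quadratic pre-Poisson structure on $A\oplus A^{*}$ is, by definition, a quadratic Zinbiel structure $(A\oplus A^{*},\ast,\omega)$ together with a quadratic pre-Lie structure $(A\oplus A^{*},\circ,\omega)$ sharing the same $\omega$; by \eqref{ef1} and \eqref{ef2} the operations $\ast,\circ$ are forced by $\omega$, and, exactly as in the proofs of Theorem \ref{Zba} and its pre-Lie counterpart, they coincide with \eqref{ppmp eq1.1}--\eqref{ppmp eq1.2} in which $A^{*}$ carries the duals of the regular representations $(-L_{\ast_{1}}^{*}-R_{\ast_{1}}^{*},R_{\ast_{1}}^{*},L_{\circ_{1}}^{*}-R_{\circ_{1}}^{*},-R_{\circ_{1}}^{*})$, and symmetrically for $A$ over $(A^{*},\ast_{2},\circ_{2})$. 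By Theorem \ref{Zba}, $(A\oplus A^{*},\ast)$ being a Zinbiel algebra with $A,A^{*}$ as subalgebras is equivalent to the Zinbiel matched-pair conditions, and similarly on the pre-Lie side; Theorem \ref{thm:3.8}, a special case of Proposition \ref{5}, then says that \eqref{ppa eq1.1}--\eqref{ppa eq1.2} hold on $A\oplus A^{*}$ precisely when the cross-compatibility conditions \eqref{ppmp eq1.3}--\eqref{ppmp eq1.12} hold. Since duals of regular representations are automatically representations, this is exactly condition $(\ref{equiva3})$.

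For $(\ref{equiva3})\Leftrightarrow(\ref{equiva4})$: Theorem \ref{Zba} identifies the Zinbiel matched-pair part of $(\ref{equiva3})$ with ``$(A,\ast_{1},\Delta)$ is a Zinbiel bialgebra'', and the pre-Lie bialgebra theorem recalled above identifies the pre-Lie matched-pair part with ``$(A,\circ_{1},\delta)$ is a pre-Lie bialgebra''; moreover, since $(A^{*},\ast_{2},\circ_{2})$ is a pre-Poisson algebra, $(A,\Delta,\delta)$ is automatically a pre-Poisson coalgebra, because \eqref{Pc1.1}--\eqref{Pc1.2} are just the transposes of \eqref{ppa eq1.1}--\eqref{ppa eq1.2} on $A^{*}$. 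What remains is to check that the matched-pair cross identities \eqref{ppmp eq1.3}--\eqref{ppmp eq1.12}, with the above choice of representations, are equivalent to the four pre-Poisson bialgebra compatibilities \eqref{ppba eq1.4}, \eqref{ppba eq1.1}, \eqref{ppba eq1.3}, \eqref{ppba eq1.2}: pairing each identity valued in $\hbox{End}(A^{*})$ or in $A^{*}$ against an element of $A$ and transposing via $\langle f^{*}(x)u^{*},v\rangle=-\langle u^{*},f(x)v\rangle$ turns each of \eqref{ppmp eq1.3}--\eqref{ppmp eq1.12} into (one half of) a comultiplication identity. This transposition bookkeeping is the core of the argument.

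For $(\ref{equiva3})\Rightarrow(\ref{equiva2})$, I would apply Lemma \ref{lem:pre to poisson}: the matched pair of pre-Poisson algebras in $(\ref{equiva3})$ produces a matched pair of Poisson algebras whose representations on $A^{*}$ are $l_{1}+r_{1}=-L_{\ast_{1}}^{*}$ and $\rho_{1}-\mu_{1}=L_{\circ_{1}}^{*}$ (and symmetrically on $A$), which is exactly $(\ref{equiva2})$. For $(\ref{equiva2})\Rightarrow(\ref{equiva1})$, Proposition \ref{pro:411} produces a Poisson algebra $A\bowtie A^{*}$ on $A\oplus A^{*}$; its commutative associative part is the matched pair $(\mathcal{C}(A),\mathcal{C}(A^{*}),-L_{\ast_{1}}^{*},-L_{\ast_{2}}^{*})$ and its Lie part is $(\frak g(A),\frak g(A^{*}),L_{\circ_{1}}^{*},L_{\circ_{2}}^{*})$, so Theorem \ref{Zba} and the pre-Lie bialgebra theorem show that the canonical antisymmetric form $\omega$ is simultaneously a Connes cocycle on the commutative associative algebra and a symplectic form on the Lie algebra underlying $A\bowtie A^{*}$; Proposition \ref{pro:4.8} then yields a compatible quadratic pre-Poisson structure $(A\oplus A^{*},\ast,\circ,\omega)$, and because $\ast,\circ$ are determined by \eqref{ef1}--\eqref{ef2} they restrict to $\ast_{1},\circ_{1}$ on $A$ and to $\ast_{2},\circ_{2}$ on $A^{*}$, which gives $(\ref{equiva1})$.

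The main obstacle will be the dualization bookkeeping in the $(\ref{equiva3})\Leftrightarrow(\ref{equiva4})$ step: matching all ten matched-pair cross identities \eqref{ppmp eq1.3}--\eqref{ppmp eq1.12} against the four bialgebra compatibilities while keeping every sign in the various dual representations $-L_{\ast}^{*}-R_{\ast}^{*},\ R_{\ast}^{*},\ L_{\circ}^{*}-R_{\circ}^{*},\ -R_{\circ}^{*}$ consistent. A secondary point to watch is the final step, where one must confirm that the Connes-cocycle/symplectic data extracted from $(\ref{equiva2})$ is precisely the data required by Proposition \ref{pro:4.8}, so that the recovered quadratic pre-Poisson algebra genuinely has $A$ and $A^{*}$ as pre-Poisson subalgebras with the prescribed multiplications.
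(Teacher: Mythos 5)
Your proposal is correct in outline and three of its four legs coincide with the paper's: both establish \eqref{equiva1}$\Leftrightarrow$\eqref{equiva3} by using the invariance conditions \eqref{ef1}--\eqref{ef2} to force the cross-actions to be the dual regular representations, both get \eqref{equiva3}$\Rightarrow$\eqref{equiva2} from Lemma \ref{lem:pre to poisson}, and both get \eqref{equiva2}$\Rightarrow$\eqref{equiva1} from Proposition \ref{pro:411} followed by Proposition \ref{pro:4.8} (the paper verifies the Connes-cocycle/symplectic property of $\omega$ by a direct computation rather than by citing Theorem \ref{Zba}, and then checks via nondegeneracy of $\omega$ that the recovered products restrict correctly on $A$ and $A^{*}$ --- the ``secondary point'' you flag, which does need the short argument the paper gives). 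The one genuine divergence is the leg reaching \eqref{equiva4}: the paper proves \eqref{equiva2}$\Leftrightarrow$\eqref{equiva4}, dualizing the \emph{four} Poisson matched-pair cross conditions \eqref{P1}--\eqref{P4} against the four bialgebra compatibilities \eqref{ppba eq1.4}, \eqref{ppba eq1.1}, \eqref{ppba eq1.3}, \eqref{ppba eq1.2}, which match up one-to-one; you instead propose \eqref{equiva3}$\Leftrightarrow$\eqref{equiva4}, dualizing the \emph{ten} pre-Poisson matched-pair cross conditions \eqref{ppmp eq1.3}--\eqref{ppmp eq1.12}. Your route is logically complete (the implications you list do close the cycle) and would work, but your claim that each of the ten identities transposes to ``one half of'' a comultiplication identity cannot be taken literally: ten conditions must collapse to four, so under the dual-regular-representation specialization several of them become redundant (roughly, \eqref{ppmp eq1.8}--\eqref{ppmp eq1.12} are the transposes of \eqref{ppmp eq1.3}--\eqref{ppmp eq1.7} with $A$ and $A^{*}$ interchanged, and one further reduction is needed). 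This makes your version of the bookkeeping noticeably heavier than the paper's; if you keep your route, you should identify the redundancies explicitly rather than assert a per-identity correspondence. Otherwise the proposal is sound.
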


\begin{proof}
By Lemma \ref{lem:pre to poisson}, we have \eqref{equiva3}$\Longrightarrow$\eqref{equiva2}. Hence
	 it is enough to prove that \eqref{equiva1}$\Longleftrightarrow$\eqref{equiva3},
	\eqref{equiva2}$\Longrightarrow$\eqref{equiva1} and
	\eqref{equiva2}$\Longleftrightarrow$\eqref{equiva4}.

\eqref{equiva1}$\Longleftrightarrow$\eqref{equiva3}. Assume that $(A \oplus A^{*},\ast,\circ,\omega)$ is a quadratic pre-Poisson algebra, 
put $x\circ a=\rho_{1}(x)a+\mu_{2}(a)x$ for all $x\in A,a\in A^{*}$.
For all $x,y\in A,a\in A^{*}$,
\begin{equation*}\omega(a,[x,y])=-\langle a,[x,y]_1\rangle=\langle \mathrm{ad}_{1}^{*}(x)a,y\rangle,\end{equation*}
\begin{equation*}\omega(x\circ a,y)=\omega(\rho_{1}(x)a+\mu_{2}(a)x,y)=-\langle \rho_{1}(x)a,y\rangle.\end{equation*}
Hence, $\rho_{1}(x)=\mathrm{ad}_{1}^{*}(x)=L_{\circ_1}^{*}(x)-R_{\circ_1}^{*}(x)$. Analogously,
\begin{equation*}\mu_{1}(x)=-R_{\circ_1}^{*}(x),~\rho_{2}(a)=L_{\circ_2}^{*}(a)-R_{\circ_2}^{*}(a),~\mu_{2}(a)=-R_{\circ_2}^{*}(a),\end{equation*} and
\begin{equation*}l_{1}(x)=-L_{\ast_1}^{*}(x)-R_{\ast_1}^{*}(x),~r_{1}(x)=R_{\ast_1}^{*}(x),
	l_{2}(a)=-L_{\ast_2}^{*}(a)-R_{\ast_2}^{*}(a),~r_{2}(a)=R_{\ast_2}^{*}(a).\end{equation*} Thus, \eqref{equiva3} holds. The converse part can be
checked straightforwardly.
	
\eqref{equiva2}$\Longrightarrow$\eqref{equiva1} By the assumption, there is a Poisson algebra $(A \oplus A^{*},\star,[\ ,\ ])$ given by \eqref{eq:140} and \eqref{eq:141}, where 
	$\mu_{1}=-L_{\ast_1}^{*},\rho_{1}=L_{\circ_1}^{*},\mu_{2}=-L_{\ast_2}^{*},\rho_{2}=L_{\circ_2}^{*}$. 	By a straightforward computation, we know that 
	$\omega$ given by \eqref{eq:om} is a Connes cocycle on $(A_{1}\oplus A_{2},\star)$ and a symplectic form on $(A_{1}\oplus A_{2},[\ ,\ ])$.
	By Proposition \ref{pro:4.8}, there is a compatible quadratic pre-Poisson algebra $(A\oplus A^{*},\ast,\circ,\omega)$  given by 
	\begin{eqnarray*}
	&&\omega((x+a)\ast (y+b),z+c)=\omega(y+b,(x+a)\star (z+c)),\\
	&&\omega((x+a)\circ (y+b),z+c)=-\omega(y+b,[x+a,z+c]),\;\forall x,y,z\in A, a,b,c\in A^{*}.
	\end{eqnarray*}
	Moreover, we have 
	\begin{eqnarray*}
&&\omega(x\ast y,z+c)=\omega(y,x\star(z+c))=\omega(y,x\star_{1}z-L^{*}_{\ast_{1}}(x)c-L^{*}_{\ast_{2}}(c)x)\\
&&=-\langle y,L^{*}_{\ast_{1}}(x)c\rangle=\langle x\ast_{1}y,c\rangle=\omega(x\ast_{1} y,z+c).
	\end{eqnarray*}
	By the nondegeneracy of $\omega$, we have
	$x\ast_{1} y=x\ast y$, and similarly $x\circ_{1} y=x\circ y$, $a\ast_{2} b=a\ast b$, $a\circ_{2} b=a\circ b$. Hence $(A, \ast_1,\circ_1)$ and $(A^{*}, \ast_2$,
	$\circ_2)$ are pre-Poisson subalgebras.
	
\eqref{equiva2}$\Longleftrightarrow$\eqref{equiva4}. 
We only give detailed proof of $(\ref{P1})\Longleftrightarrow (\ref{ppba eq1.4})$ in the case of
\begin{eqnarray*}
	\mu_{1}=-L_{\ast_{1}}^{*},\rho_{1}=L_{\circ_{1}}^{*},
	\mu_{2}=-L_{\ast_{2}}^{*},\rho_{2}=L_{\circ_{2}}^{*}.
\end{eqnarray*}
The other cases can
be checked similarly. In fact, for all $x,y\in A,a,b\in A^{*}$, we have
\begin{eqnarray*}
&&\langle L^{*}_{\circ_{2}}(a)(x\star y),b\rangle=-\langle x\star_{1}y,a\circ_{2}b\rangle=-\langle \delta(x\star_{1}y),a\otimes b\rangle,\\
&&\langle (L^{*}_{\circ_{2}}(a)x)\star_{1}y,b\rangle=-\langle L^{*}_{\circ_{2}}(a)x,L^{*}_{\star_{1}}(y)b\rangle=\langle x,a\circ_{2}L^{*}_{\star_{1}}(y)b\rangle=-\langle (I\otimes L_{\star_{1}}(y))\delta(x),a\otimes b\rangle,\\
&&\langle x\star_{1}(L^{*}_{\circ_{2}}(a)y),b\rangle=-\langle L^{*}_{\circ_{2}}(a)y,L^{*}_{\star}(x)b\rangle=\langle y, a\circ_{2}L^{*}_{\star_{1}}(x)b\rangle=-\langle (I\otimes L_{\star_{1}}(x))\delta(y),a\otimes b\rangle,\\
&&\langle L^{*}_{\ast_{2}}(L^{*}_{\circ_{1}}(x)a)y,b\rangle=-\langle y,L^{*}_{\circ_{1}}(x)a\ast_{2}b\rangle=\langle (L_{\circ_{1}}(x)\otimes I)\Delta(y),a\otimes b\rangle,\\
&&\langle L^{*}_{\ast_{2}}(L^{*}_{\circ_{1}}(y)a)x,b\rangle=-\langle x,L^{*}_{\circ_{1}}(y)a\ast_{2}b\rangle=\langle (L_{\circ_{1}}(y)\otimes I)\Delta(x),a\otimes b\rangle,
\end{eqnarray*}
 which yields that $(\ref{P1})\Longleftrightarrow (\ref{ppba eq1.4})$.
Similarly, we have
$(\ref{P2})\Longleftrightarrow (\ref{ppba eq1.1})$,
$(\ref{P3})\Longleftrightarrow (\ref{ppba eq1.3})$,
$(\ref{P4})\Longleftrightarrow (\ref{ppba eq1.2})$.
\end{proof}

\subsection{Coboundary Pre-Poisson bialgebras.}

In this section, we consider the coboundary pre-Poisson
bialgebras. 

\begin{defi} A pre-Poisson bialgebra $(A,\ast,\circ,\Delta,\delta)$ is called coboundary if there is some
$r\in A\otimes A$ such that \eqref{CB1} and \eqref{CB2} hold.
\end{defi}

\begin{pro} \label{CNPB}
Let $(A,\ast,\circ,\delta,\Delta)$ be a pre-Poisson bialgebra
and  $r=\sum\limits_{i}a_{i}\otimes b_{i}\in A\otimes A$. Define
$\Delta,\delta:A\longrightarrow A\otimes A$ by (\ref{CB1}) and
(\ref{CB2}) respectively. Then
\begin{enumerate}
	\item \label{CNPB1}
	 Eq.~(\ref{Pc1.1}) holds if and only if
	\begin{align}\label{CA1}
		&(I\otimes L_{\ast}(x)\otimes I-I\otimes I\otimes L_{\star}(x))S(r)+(L_{\circ}(x)\otimes I\otimes I)D_{2}(r)\\
		&+\sum_{j} (I\otimes L_{\circ}(a_{j})\otimes L_{\star}(x)+\mathrm{ad}(a_{j})\otimes I\otimes L_{\star}(x))(((r-\tau(r))\otimes b_{j}) )\nonumber\\
		&-(\mathrm{ad}(x\ast a_{j})\otimes I\otimes I+R_{\circ}(a_{j})L_{\star}(x)\otimes I\otimes I)((r-\tau(r))\otimes b_{j})\nonumber\\
		&-(I\otimes L_{\circ}(x\ast a_{j})\otimes I+R_{\circ}(a_{j})\otimes L_{\ast}(x)\otimes I)((r-\tau(r))\otimes b_j)=0.\nonumber
	\end{align}
		\item \label{CNPB2} Eq.~(\ref{Pc1.2}) holds if and only if
	\begin{align}\label{CA2}
		&(I\otimes I\otimes \mathrm{ad}(x))D_{2}(r)+(L_{\ast}(x)\otimes I\otimes I+ I\otimes L_{\ast}(x)\otimes I)S(r)\\
		&+\sum_{j} (I\otimes L_{\ast}(a_j)\otimes \mathrm{ad}(x)-L_{\star}( a_j)\otimes I\otimes \mathrm{ad}(x))((r-\tau(r))\otimes b_j)\nonumber\\
		&+(R_{\circ}(a_j)L_{\star}(x)\otimes I\otimes I- L_{\star}(x\circ a_j)\otimes I \otimes I)((r-\tau(r))\otimes b_j)\nonumber\\
		&+
		(I\otimes L_{\ast}(x\circ a_j)\otimes I-R_{\circ}(a_i)\otimes L_{\ast}(x)\otimes I)((r-\tau(r))\otimes b_j)=0.\nonumber
		\end{align}
	\item \label{CNPB6} Eqs.~(\ref{ppba eq1.4}) and (\ref{ppba eq1.1}) hold naturally.
	
	\item \label{CNPB4} Eq.~(\ref{ppba eq1.3}) holds if and only if
	\begin{align}\label{CA4}
		&(I\otimes L_{\circ}(x\ast y)-I\otimes L_{\ast}(x)L_{\circ}(y)+L_{\circ}(x\ast y)\otimes I-L_{\circ}(x)L_{\ast}(y)\otimes I \\
		&
	+L_{\circ}(x)\otimes L_{\ast}(y)-L_{\circ}(y)\otimes L_{\ast}(x)
		)(r-\tau(r))=0.\nonumber\end{align}
		
			\item \label{CNPB3} Eq.~(\ref{ppba eq1.2}) holds if and only if
		\begin{align}\label{CA3}&
			(L_{\circ}(x)\otimes L_{\ast}(y)-L_{\ast}(y)\otimes L_{\circ}(x))(r-\tau(r))
			\\&+(\tau+I\otimes I)(I\otimes L_{\circ}(x)L_{\ast}(y)-I\otimes L_{\ast}(x\circ y))(r-\tau(r))=0\nonumber.\end{align}
\end{enumerate}
\end{pro}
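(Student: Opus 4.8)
The plan is to prove all five parts by a single mechanism: substitute the coboundary expressions \eqref{CB1} and \eqref{CB2} into the identity in question, push every operator inside so that it acts leg-wise on $r$, and then rearrange using (i) the pre-Poisson algebra axioms --- the Zinbiel identity, the pre-Lie identity, and the two compatibility relations \eqref{ppa eq1.1}--\eqref{ppa eq1.2}, together with the Leibniz rule of the sub-adjacent Poisson algebra --- and (ii) the definitions of $D(r),D_{1}(r),D_{2}(r)$ and $S(r)$. Throughout I would write $r=\sum_{i}a_{i}\otimes b_{i}$ and carry the antisymmetric part $r-\tau(r)$ as a single symbol, since it is exactly this combination that surfaces in the final conditions \eqref{CA1}--\eqref{CA3}. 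The preliminary bookkeeping I would fix once and for all consists of the swap rules $\tau\big((P\otimes I)r\big)=(I\otimes P)\tau(r)$ and $\tau\big((I\otimes Q)r\big)=(Q\otimes I)\tau(r)$ for one-sided operator actions, plus their three-leg analogues needed to handle the terms $\big((r-\tau(r))\otimes b_{j}\big)$ occurring in \eqref{CA1} and \eqref{CA2}.

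For part \eqref{CNPB6}: after substituting \eqref{CB1} and \eqref{CB2} into \eqref{ppba eq1.4} and \eqref{ppba eq1.1}, every term has the form $(P\otimes Q)r$ with $P,Q$ words in $L_{\ast},R_{\ast},L_{\circ},R_{\circ},L_{\star},\mathrm{ad}$. Applying the pre-Poisson compatibilities \eqref{ppa eq1.1} and \eqref{ppa eq1.2} to each tensor leg separately, together with the Leibniz rule, lets the two sides be matched term by term so that no constraint on $r$ survives; this is the sense in which \eqref{ppba eq1.4} and \eqref{ppba eq1.1} hold automatically. The step has no conceptual content, but it is where the leg-by-leg bookkeeping is heaviest, so I would carry it out first as a rehearsal of the notation.

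For parts \eqref{CNPB1}, \eqref{CNPB2}, \eqref{CNPB4} and \eqref{CNPB3}: substituting \eqref{CB1}, \eqref{CB2} into \eqref{Pc1.1}, \eqref{Pc1.2}, \eqref{ppba eq1.3}, \eqref{ppba eq1.2} respectively, the resulting terms split into a ``single-$r$'' family, in which all three tensor legs come from one copy of $r$, and a ``two-$r$'' family, produced whenever an operator $L_{\ast}(z)$ or $L_{\circ}(z)$ whose argument $z$ is itself a component of $r$ meets a second copy of $r$. For \eqref{Pc1.1} and \eqref{Pc1.2} the single-$r$ family reorganizes --- via the Zinbiel and pre-Lie identities --- into the operator-times-$D_{2}(r)$ and operator-times-$S(r)$ blocks displayed in \eqref{CA1} and \eqref{CA2}, while the two-$r$ family, after the $\tau$-rules above, assembles into the $\big((r-\tau(r))\otimes b_{j}\big)$-type three-fold tensors there; for \eqref{ppba eq1.3} and \eqref{ppba eq1.2} the single-$r$ contributions cancel identically by those same identities, leaving only the two-$r$ part, which organizes into the operator-times-$(r-\tau(r))$ expressions \eqref{CA4} and \eqref{CA3}. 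Collecting, \eqref{Pc1.1} (resp. \eqref{Pc1.2}, \eqref{ppba eq1.3}, \eqref{ppba eq1.2}) becomes the vanishing of precisely the expression in \eqref{CA1} (resp. \eqref{CA2}, \eqref{CA4}, \eqref{CA3}), which yields both directions of the equivalence.

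The main obstacle is entirely organizational: distributing the six multiplication operators correctly over three tensor legs, and tracking the sign changes and leg permutations produced by the several $\tau$'s appearing in \eqref{Pc1.1}, \eqref{Pc1.2}, \eqref{ppba eq1.3}, \eqref{ppba eq1.2} and in the definitions of $D(r)$ and $S(r)$. I would control this by pinning down the rewriting rules for $\tau$ before anything else, by isolating the single-$r$ and two-$r$ parts prior to invoking any algebra axiom, and by recording in each of the five cases exactly which instance of \eqref{ppa eq1.1} or \eqref{ppa eq1.2}, and on which leg, is being used; with those conventions in place each verification is a finite reduction with no remaining choices.
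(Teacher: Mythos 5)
Your proposal follows essentially the same route as the paper: substitute the coboundary formulas \eqref{CB1}--\eqref{CB2}, expand leg by leg with the stated $\tau$-swap rules, and reorganize via the pre-Poisson axioms into operator-times-$D_{2}(r)$, operator-times-$S(r)$ and $(r-\tau(r))$ blocks, which is exactly how the paper handles items \eqref{CNPB1} and \eqref{CNPB4} explicitly (grouping the expanded left-hand side of \eqref{Pc1.1} into three sums $P(1),P(2),P(3)$) and dismisses the remaining items ``by the same procedure.'' One bookkeeping correction that does not affect the validity of the plan: in the coalgebra identities \eqref{Pc1.1}--\eqref{Pc1.2} every term is quadratic in $r$ (there is no family with all three legs coming from a single copy of $r$), whereas in \eqref{ppba eq1.3}--\eqref{ppba eq1.2} every term is linear in $r$, so the surviving expressions \eqref{CA4} and \eqref{CA3} are the ``single-$r$'' part rather than the ``two-$r$'' part as you describe.
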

\begin{proof} 
	For all $x\in A$, we have
	{\small
	\begin{eqnarray*}
&&(\delta\otimes I)\Delta(x)-(\tau\otimes I)(\delta\otimes I)\Delta(x)-(I\otimes \Delta)\delta(x)+(\tau\otimes I)(I\otimes \delta)\Delta(x)\\
&&=\sum_{i,j}a_{i}\circ a_{j}\otimes b_{j}\otimes x\star b_{i}+a_{j}\otimes [a_{i},b_{j}]\otimes x\star b_{i}-(x\ast a_{i})\circ a_{j}\otimes b_{j}\otimes b_{i}-a_{j}\otimes [x\ast a_{i},b_{j}]\otimes b_{i} \\
&&\ \ -b_{j}\otimes a_{i}\circ a_{j}\otimes x\star b_{i}-[a_{i},b_{j}]\otimes a_{j}\otimes x\star b_{i}+b_{j}\otimes (x\ast a_{i})\circ a_{j}\otimes b_{i}
+[x\ast a_{i},b_{j}]\otimes a_{j}\otimes b_{i}\\
&&\ \ -x\circ a_{i}\otimes a_{j}\otimes b_{i}\star b_{j}+x\circ a_{i}\otimes b_{i}\ast a_{j}\otimes b_{j}-a_{i}\otimes a_{j}\otimes [x,b_{i}]\star b_{j}+a_{i}\otimes [x,b_{i}]\ast a_{j}\otimes b_{j}\\
&&\ \ +(x\star b_{i})\circ a_{j}\otimes a_{i}\otimes b_{j}+a_{j}\otimes a_{i}\otimes [x\star b_{i},b_{j}]-b_{i}\circ a_{j}\otimes x\ast a_{i}\otimes b_{j}-a_{j}\otimes x\ast a_{i}\otimes [b_{i},b_{j}]\\
&&=P(1)+P(2)+P(3),
	\end{eqnarray*}}
	where
	{\small
\begin{eqnarray*}
	P(1)&=&\sum_{i,j} -(x\ast a_{i})\circ a_{j}\otimes b_{j}\otimes b_{i}+[x\ast a_{i},b_{j}]\otimes a_{j}\otimes b_{i}-x\circ a_{i}\otimes a_{j}\otimes b_{i}\star b_{j}\\
	&&+x\circ a_{i}\otimes b_{i}\star a_{j}\otimes b_{j}+(x\star b_{i})\circ a_{j}\otimes a_{i}\otimes b_{j}\\
	&=&(L_{\circ}(x)\otimes I\otimes I)D_{2}(r)+\sum_{i,j} -x\circ (a_{i}\ast a_{j})\otimes b_{i}\otimes b_{j}-(x\ast a_{i})\circ a_{j}\otimes b_{j}\otimes b_{i}\\
	&&+[x\ast a_{i},b_{j}]\otimes a_{j}\otimes b_{i}+(x\star b_{i})\circ a_{j}\otimes a_{i}\otimes b_{j}\\
	&=&(L_{\circ}(x)\otimes I\otimes I)D_{2}(r)+\sum_{i,j} -x\circ (a_{i}\ast a_{j})\otimes b_{i}\otimes b_{j}-(x\ast a_{j})\circ a_{i}\otimes b_{i}\otimes b_{j}\\
	&&+[x\ast a_{j},b_{i}]\otimes a_{i}\otimes b_{j}-[x\ast a_{j},a_{i}]\otimes b_{i}\otimes b_{j}+[x\ast a_{j},a_{i}]\otimes b_{i}\otimes b_{j}\\
	&&+(x\star b_{i})\circ a_{j}\otimes a_{i}\otimes b_{j}+(x\star a_{i})\circ a_{j}\otimes b_{i}\otimes b_{j}-(x\star a_{i})\circ a_{j}\otimes b_{i}\otimes b_{j}\\
	&=&(L_{\circ}(x)\otimes I\otimes I)D_{2}(r)-\sum_{j} (\mathrm{ad}(x\ast a_{j})\otimes I\otimes I+R_{\circ}(a_{j})L_{\star}(x)\otimes I\otimes I)((r-\tau(r))\otimes b_{j}),\\
	P(2)&=&\sum_{i,j} -a_{j}\otimes [x\ast a_{i},b_{j}]\otimes b_{i}+b_{j}\otimes (x\ast a_{i})\circ a_{j}\otimes b_{i}+a_{i}\otimes [x,b_{i}]\ast a_{j}\otimes b_{j}\\
	&&-b_{i}\circ a_{j}\otimes x\ast a_{i}\otimes b_{j}-a_{j}\otimes x\ast a_{i}\otimes [b_{i},b_{j}]\\
	&=&\sum_{i,j} -a_{i}\otimes [x\ast a_{j},b_{i}]\otimes b_{j}+b_{i}\otimes (x\ast a_{j})\circ a_{i}\otimes b_{j}-a_{i}\otimes (x\ast a_{j})\circ a_{i}\otimes b_{j}\\
	&&+a_{i}\otimes (x\ast a_{j})\circ a_{i}\otimes b_{j}+a_{i}\otimes [x,b_{i}]\ast a_{j}\otimes b_{j}-b_{i}\circ a_{j}\otimes x\ast a_{i}\otimes b_{j}\\
	&&+a_{i}\circ a_{j}\otimes x\ast b_{i}\otimes b_{j}-a_{i}\circ a_{j}\otimes x\ast b_{i}\otimes b_{j}+a_{i}\otimes x\ast a_{j}\otimes [b_{i},b_j]\\
	&=&\sum_{j} -(I\otimes L_{\circ}(x\ast a_{j})\otimes I+R_{\circ}(a_{j})\otimes L_{\ast}(x)\otimes I)((r-\tau(r))\otimes b_j)\\
	&&+\sum_{i,j}a_{i}\otimes b_{i}\circ (x\ast a_{j})\otimes b_{j} +a_{i}\otimes [x,b_{i}]\ast a_{j}\otimes b_{j}\\
	&&-a_{i}\circ a_{j}\otimes x\ast b_{i}\otimes b_{j}+a_{i}\otimes x\ast a_{j}\otimes [b_{i},b_j]\\
	&=&\sum_{j} -(I\otimes L_{\circ}(x\ast a_{j})\otimes I+R_{\circ}(a_{j})\otimes L_{\ast}(x)\otimes I)((r-\tau(r))\otimes b_j)\\
	&&+\sum_{i,j} a_{i}\otimes x\ast (b_{i}\circ a_{j})\otimes b_{j}-a_{i}\circ a_{j}\otimes x\ast b_{i}\otimes b_{j}+a_{i}\otimes x\ast a_{j}\otimes [b_{i},b_{j}]\\
	&=&(I\otimes L_{\ast}(x)\otimes I)S(r)-\sum_{j} (I\otimes L_{\circ}(x\ast a_{j})\otimes I+R_{\circ}(a_{j})\otimes L_{\ast}(x)\otimes I)((r-\tau(r))\otimes b_j),\\
P(3)&=&\sum_{i,j} a_{i}\circ a_{j}\otimes b_{j}\otimes x\star b_{i}+a_{j}\otimes [a_{i},b_{j}]\otimes x\star b_{i}-b_{j}\otimes a_{i}\circ a_{j}\otimes x\star b_{i}\\
&&-[a_{i},b_{j}]\otimes a_{j}\otimes x\star b_{i}-a_{i}\otimes a_{j}\otimes [x,b_{i}]\star b_{j}+a_{j}\otimes a_{i}\otimes [x\star b_{i},b_{j}]\\
&=&\sum_{i,j} a_{j}\circ a_{i}\otimes b_{i}\otimes x\star b_{j}+a_{i}\otimes [a_{j},b_{i}]\otimes x\star b_{j}-b_{i}\otimes a_{j}\circ a_{i}\otimes x\star b_{j}\\
&&+a_{i}\otimes a_{j}\circ b_{i}\otimes x\star b_{j}-a_{i}\otimes a_{j}\circ b_{i}\otimes x\star b_{j}-[a_{j},b_{i}]\otimes a_{i}\otimes x\star b_{j}\\
&&+[a_{j},a_{i}]\otimes b_{i}\otimes x\star b_{j}-[a_{j},a_{i}]\otimes b_{i}\otimes x\star b_j-a_{i}\otimes a_{j}\otimes x\star [b_{i},b_{j}]\\
&=&\sum_{j} (I\otimes L_{\circ}(a_{j})\otimes L_{\star}(x)+\mathrm{ad}(a_{j})\otimes I\otimes L_{\star}(x))(((r-\tau(r))\otimes b_{j}) )\\
&&+\sum_{i,j} a_{i}\circ a_{j}\otimes b_{i}\otimes x\star b_{j}-a_{i}\otimes b_{i}\circ a_{j}\otimes x\star b_{j}-a_{i}\otimes a_{j}\otimes x\star [b_{i},b_{j}]\\
&=&-(I\otimes I\otimes L_{\star}(x))S(r)+\sum_{j} (I\otimes L_{\circ}(a_{j})\otimes L_{\star}(x)+\mathrm{ad}(a_{j})\otimes I\otimes L_{\star}(x))(((r-\tau(r))\otimes b_{j}) ).
\end{eqnarray*}}Hence Eq.~(\ref{Pc1.1}) holds if and only if Eq.~(\ref{CA1}) holds, which gives rise to \eqref{CNPB1}.
Taking the same procedure, we can prove that \eqref{CNPB2} holds.

Moreover, for all $x,y\in A$,	 we have
\begin{eqnarray*}
	&&\delta(x\ast y)-\tau\delta(x\ast y)-(I\otimes
	L_{\ast}(x))\delta(y)+\tau(L_{\ast}(x)\otimes
	I)\delta(y)\\
	&&+(L_{\circ}(x)\otimes I)\Delta(y)+\tau(I\otimes
	L_{\circ}(x))\Delta(y)-(I\otimes R_{\ast}(y))\delta(x)
	-\tau(I\otimes R_{\circ}(y))\Delta(x)\\
	&&=\sum_{i} (x\ast y)\circ a_{i}\otimes b_{i}+a_{i}\otimes [x\ast y,b_{i}]-b_{i}\otimes (x\ast y)\circ a_{i}-[x\ast y,b_{i}]\otimes a_{i}\\
	&&\ \ -y\circ a_{i}\otimes x\ast b_{i}-a_{i}\otimes x\ast [y,b_{i}]+b_{i}\otimes x\ast (y\circ a_{i})+[y,b_{i}]\otimes x\ast a_{i}\\
	&&\ \ +x\circ a_{i}\otimes y\star b_{i}-x\circ (y\ast a_{i})\otimes b_{i}
	+x\circ (y\star b_{i})\otimes a_{i}-x\circ b_{i}\otimes y\ast a_{i}\\
	&&\ \ -x\circ a_{i}\otimes b_{i}\ast y-a_{i}\otimes [x,b_{i}]\ast y-(x\star b_{i})\circ y\otimes a_{i}+b_{i}\circ y\otimes x\ast a_{i}\\
&&=	\sum_{i} a_{i}\otimes (x\ast y)\circ b_{i}-b_{i}\otimes (x\ast y)\circ a_{i}
+b_{i}\otimes x\ast (y\circ a_{i})-a_{i}\otimes x\ast (y\circ b_{i})\\
&&\ \ -(x\ast y)\circ b_{i}\otimes a_{i}+(x\ast y)\circ a_{i}\otimes b_{i}+x\circ (y\ast b_{i})\otimes a_{i}-x\circ (y\ast a_{i})\otimes b_{i}\\
&&\ \ +y\circ b_{i}\otimes x\ast a_{i}-y\circ a_{i}\otimes x\ast b_{i}-x\circ b_{i}\otimes y\ast a_{i}+x\circ a_{i}\otimes y\ast b_{i}\\
&&=(I\otimes L_{\circ}(x\ast y)-I\otimes L_{\ast}(x)L_{\circ}(y)+L_{\circ}(x\ast y)\otimes I-L_{\circ}(x)L_{\ast}(y)\otimes I\\
&&\ \ +L_{\circ}(x)\otimes L_{\ast}(y)-L_{\circ}(y)\otimes L_{\ast}(x))(r-\tau(r)),
\end{eqnarray*}
which indicates that item \eqref{CNPB4} holds. Taking the same procedure, we can prove that \eqref{CNPB6} and \eqref{CNPB3} hold.
\end{proof}

Let $(A,\ast,\circ)$ be a pre-Poisson algebra and $r\in
A\otimes A$. We say that  $r$ satisfies {\bf the pre-Poisson Yang-Baxter equation} ({\bf PPYBE} in short)
 if $r$ satisfies both the $D$-equation:$$D(r)=r_{23}\ast r_{12}+r_{23}\ast r_{13}-r_{12}\star r_{13}=0,$$
 and the  $S$-equation:
  $$ S(r)=-r_{12}\circ r_{13}+r_{12}\circ r_{23}+[r_{13}, r_{23}]=0.$$
 
 Then by the above results, we reach the following conclusion.
 
\begin{cor}\label{PPYBE} 
	Let $(A,\ast,\circ)$ be a pre-Poisson algebra and $r\in A\otimes A$ be
	a symmetric solution of the  PPYBE in $(A,\ast,\circ)$.
	Then $(A,\ast,\circ)$ is a coboundary pre-Poisson bialgebra, where $\Delta,\delta:A\rightarrow A\otimes A$ are linear maps given by \eqref{CB1} and \eqref{CB2} respectively.
\end{cor}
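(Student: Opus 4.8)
The plan is to assemble the conclusion from three facts already established: the theorem asserting that a symmetric solution of the $D$-equation, together with the coproduct \eqref{CB1}, yields a Zinbiel bialgebra; the theorem asserting that a symmetric solution of the $S$-equation, together with \eqref{CB2}, yields a pre-Lie bialgebra; and Proposition~\ref{CNPB}, which rewrites the remaining defining identities of a pre-Poisson bialgebra as tensor equations in $r$. The first step is to record two immediate consequences of the hypothesis: since $r$ is symmetric, $r-\tau(r)=0$, and since moreover $D(r)=0$, the equivalence noted right after the definition of the $D$-equation gives $D_{1}(r)=D_{2}(r)=0$; finally $S(r)=0$ by hypothesis.

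Next I would invoke the two bialgebra theorems verbatim: $r$ being a symmetric solution of $D(r)=0$ makes $(A,\ast,\Delta)$ a Zinbiel bialgebra (in particular $(A,\Delta)$ a Zinbiel coalgebra), and $r$ being a symmetric solution of $S(r)=0$ makes $(A,\circ,\delta)$ a pre-Lie bialgebra (in particular $(A,\delta)$ a pre-Lie coalgebra). It then remains to check that $(A,\Delta,\delta)$ is a pre-Poisson coalgebra, i.e.\ that \eqref{Pc1.1} and \eqref{Pc1.2} hold, and that the four mixed compatibility conditions \eqref{ppba eq1.4}, \eqref{ppba eq1.1}, \eqref{ppba eq1.3}, \eqref{ppba eq1.2} hold. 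Here Proposition~\ref{CNPB} does all the work: by items \eqref{CNPB1} and \eqref{CNPB2}, the coalgebra axioms \eqref{Pc1.1}, \eqref{Pc1.2} are equivalent to \eqref{CA1}, \eqref{CA2}, and every term appearing in \eqref{CA1} and \eqref{CA2} is either a linear operator applied to $S(r)$ or to $D_{2}(r)$, or else carries a factor $(r-\tau(r))\otimes b_{j}$, so each summand vanishes under our hypotheses; by item \eqref{CNPB6} the conditions \eqref{ppba eq1.4} and \eqref{ppba eq1.1} hold automatically; and by items \eqref{CNPB4} and \eqref{CNPB3} the conditions \eqref{ppba eq1.3}, \eqref{ppba eq1.2} are equivalent to \eqref{CA4}, \eqref{CA3}, each of which is an operator applied to $r-\tau(r)=0$.

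Assembling these verifications, every clause in the definition of a pre-Poisson bialgebra holds for $(A,\ast,\circ,\Delta,\delta)$, and since $\Delta,\delta$ were defined by the coboundary formulas \eqref{CB1} and \eqref{CB2}, this is by definition a coboundary pre-Poisson bialgebra. There is essentially no analytic obstacle in the argument: the computational substance has been absorbed into the two bialgebra theorems and into Proposition~\ref{CNPB}. The only point requiring care is the bookkeeping inside \eqref{CA1}--\eqref{CA4} --- confirming that once $r=\tau(r)$, $D_{2}(r)=0$, and $S(r)=0$ are imposed, every term of each of these expressions genuinely drops out with nothing left over.
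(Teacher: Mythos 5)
Your proposal is correct and follows exactly the route the paper intends (the paper itself only says ``by the above results''): combine the Zinbiel- and pre-Lie-bialgebra theorems with Proposition~\ref{CNPB}, then observe that under $r=\tau(r)$, $D(r)=0$ (hence $D_{2}(r)=0$ by the stated equivalence) and $S(r)=0$, every term of \eqref{CA1}--\eqref{CA4} vanishes. No gaps.
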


%%%%%%%%%%%%%%%%%%%%%%%%%%%%%%%%%%%%%%%%%%%%%%%%%%%%%%%%%%%%%%%%%%%%%%%%%%%%%%%%%%%%%%%%%%%%%%%%%%%%%%%%%%

\begin{center}{\textbf{Acknowledgments}}
\end{center}
This work was supported by the National Natural Science Foundation of
China (12401031), 
the Natural Science
Foundation of Zhejiang Province of China (LY19A010001), the Science
and Technology Planning Project of Zhejiang Province
(2022C01118) and the Postdoctoral Fellowship Program of CPSF (GZC20240755, 2024T005TJ, 2024M761507).

%%%%%%%%%%%%%%%%%%%%%%%%%%%%%%%%%%%%%%%%%%%%%%%%%%%%%%%%%%%%%%%%%%%%%%%%%%%%%%%%%%%%%%%%%%%%%%%%%%%%%%%%%%
\begin{center} {\textbf{Statements and Declarations}}
\end{center}
 All datasets underlying the conclusions of the paper are available
to readers. No conflict of interest exits in the submission of this
manuscript.

%%%%%%%%%%%%%%%%%%%%%%%%%%%%%%%%%%%%%%%%%%%%%%%%%%%%%%%%%%%%%%%%%%%%%%%%%%%%%%%%%%%%%%%%%%%%%%%%%%%%%%%%%%


\begin{thebibliography} {99}
	\bibitem [1] {1} M. Aguiar, pre-Poisson algebras, Lett. Math. Phys. 54 (2000), 263-277.
	
\bibitem[2] {011} M. Aguiar, On the associative analog of Lie bialgebras, J. Algebra. 244 (2001), 492-532.

	\bibitem [3] {17} A. L. Agore, G. Militaru, Classifying complements for Hopf algebras and Lie algebras, J. Algebra 391 (2013), 193-208.

\bibitem [4] {101} A. L. Agore, G. Militaru, Unified products for Leibniz algebras, Linear. Algebra Appl. 439 (2013), 2609-2633.
	
	\bibitem [5] {2} A. L. Agore, G. Militaru, Extending structures I: the level of groups, Algebr. Represent. Theory. 17 (2014), 831--848.
	
	\bibitem [6] {3} A. L. Agore, G. Militaru, Extending structures for Lie algebra, Monatsh. fur Mathematik. 174 (2014), 169--193.
	
	\bibitem [7] {19} A. L. Agore, G. Militaru, Jacobi and Poisson algebras, J. Noncommut. Geom. 9 (2015), 1295-1342. 

\bibitem [8] {100} A. L. Agore, G. Militaru, The global extension problem, crossed products
and co-flag non-commutative Poisson algebras, J. Algebra 426 (2015), 1–31.

\bibitem [9] {08} A. L. Agore, G. Militaru, Unified products for Jordan algebras. Applications, J. Pure. Appl. Algebra 227 (4) (2023). 

\bibitem [10] {01} C. Bai, Left-symmetric bialgebras and an analogue of the classical
	 Yang-Baxter equation, Commun. Contemp. Math. 10 (2) (2008).
	 
	\bibitem [11] {02} C. Bai, Double constructions of Frobenius algebras, Connes cocycles and their duality, J. Noncommut. Geom. 4 (2010), 475-530.
	 
	\bibitem[12]{010} V. G. Drinfeld, Hamiltonian structures on Lie groups, Lie bialgebras and the geometric meaning of the classical
Yang–Baxter equations, Dokl. Akad. Nauk SSSR, 268 (2), 285–287 (1983).
	 
	\bibitem [13] {6} Y. Hong, Extending structures and classifying complements for left-symmetric algebras, Results. Math. 74 (32) (2019).
	
	\bibitem [14] {22} Y. Hong, Extending structures for Lie bialgebras, J. Lie Theory. 33 (2023), 783-798.
	 
	\bibitem [15] {20} Y. Hong, Extending structures for associative conformal algebras, Linear Multilinear Algebra. 67 (2019), 196–212.
	
\bibitem [16] {21} Y. Hong, Y. Su, Extending structures for Lie conformal algebras, Algebr. Represent. Theor. 20 (2017), 209–230.
	  
	\bibitem [17] {06} O. H$\ddot{o}$lder, Bildung zusammengesetzter Gruppen, Math. Ann. 46 (1895), 321-422.
	
\bibitem [18] {28} B. Hou, Extending structures for perm algebras and perm bialgebras, J. Algebra 649 (2024), 392-432.

\bibitem [19] {9} M. Livernet, Rational homotopy of Leibniz algebras, Manuscripta Math. 96 (1998), 295-315. 

\bibitem [20] {09} J. Liu, C. Bai, Y. Sheng, Noncommutative Poisson bialgebras, J. Algebra 556 (2020), 35–66.

%\bibitem [21] {05} J. Liu, Y. Sheng, C. Bai, F-manifold algebras and deformation quantization via pre-Lie algebras, J. Algebra 559 (2020), 467–495. 
	
	\bibitem [22] {8} J. L. Loday, Cup-product for Leibniz cohomology and dual Leibniz algebras, Math. Scand. 77 (2) (1995), 189-196. 
	
	\bibitem [23] {10} J. L. Loday, Dialgebras, in Dialgebras and related operads, Lecture Notes in Math. 1763, Springer, Berlin (2001), 7-66.
	 
	\bibitem [24] {04} X. Ni, C. Bai, Poisson bialgebras, J. Math. Phys. 54 (2) (2013), 023515.

    \bibitem [25] {32} O. Ore, Structures and group theory I, Duke Math. J. 3 (1937), 149-174.

   	\bibitem [26] {23} X. Peng, Y. Zhang, Extending structures of Rota-baxter Lie algebras, arXiv: 2306.15874.
    
	\bibitem [27] {24} T. Zhang, Extending structures for 3-Lie algebras, Comm. Algebra 50 (4) (2022), 1469–1497.
	
    \bibitem [28] {25} T. Zhang, Unified products for braided Lie bialgebras with applications, J. Lie Theory 32 (3) (2022), 671–696.
    
	\bibitem [29] {4} T. Zhang, H. J. Yao, Extending structures for left-symmetric bialgebras, arXiv: 2211.05202v2.

	\bibitem [30] {5} T. Zhang, L. Zhang, Extending structures for Zinbiel algebras, arXiv: 2203.15692v3.

	\bibitem [31] {27} J. Zhao, L. Chen, L. Yuan, Extending structures of Lie conformal superalgebras, Comm. Algebra 47 (4) (2019), 1541–1555.	

\bibitem[32]{012} V. N. Zhelyabin, Jordan bialgebras and their connections with Lie bialgebras, Algebra Logic 36, 1-15 (1997). 

 \bibitem[33]{013} V. N. Zhelyabin, Jordan bialgebras of symmetric elements and Lie bialgebras, Sib. Math. J. 39 (2), 261-276 (1998).
 
\end{thebibliography}
\end {document}